\documentclass[english]{smfart}
\usepackage[final]{microtype}
\usepackage[paperwidth=182mm,paperheight=257mm,text={144mm,210mm},centering]{geometry}
\usepackage{enumitem}
\setlist[enumerate]{left=\parindent,label=\textup{(\arabic*)}, ref=\textup{\arabic*}}
\setlist[itemize]{left=\parindent}



\usepackage{hyperref,amsthm,tikz-cd,xcolor}

\usepackage[theoremfont]{stickstootext}
\usepackage[frenchmath,smallerops,stix2,upint]{newtxmath}
\linespread{1.15}
\tikzcdset{arrow style=tikz, diagrams={>={Straight Barb[scale=0.7]}}} 
\DeclareFontEncoding{FMX}{}{}
\DeclareFontSubstitution{FMX}{futm}{m}{n}
\DeclareSymbolFont{fourier}{FMX}{futm}{m}{n}
\let\sum\relax\DeclareMathSymbol{\sum}{\mathop}{fourier}{80}
\let\prod\relax\DeclareMathSymbol{\prod}{\mathop}{fourier}{81}

\usepackage[cal=stixtwoplain,scr=rsfso]{mathalpha}

\renewcommand{\setminus}{\mathbin{\rule[0.2em]{0.67em}{0.12em}}}%
\renewcommand{\mathbb}{\mathbf}
\renewcommand{\emptyset}{\varnothing}
\renewcommand{\leq}{\leqslant}
\renewcommand{\geq}{\geqslant}
\renewcommand{\subset}{\subseteq}
\renewcommand{\supset}{\supseteq}
\newcommand{\Hom}{\mathcal{H}\!\textit{om}}

\newcommand{\capprod}{\mathbin{
  \tikz[baseline=\the\dimexpr\fontdimen22\textfont2\relax, line width=0.12ex] {
    \draw (0.12,0.12) arc[start angle=180, end angle=0, x radius=0.28em, y radius=0.4em];
  }
}}

\theoremstyle{remark}
\newtheorem{remark}{Remark}[subsection]
\newtheorem*{remark*}{Remark}

\theoremstyle{plain}
\newtheorem{theorem1}{Theorem}
\newtheorem{theorem}[remark]{Theorem}

\newtheorem{lemma}[remark]{Lemma}

\newtheorem*{proposition*}{Proposition}
\newtheorem{corollary}[remark]{Corollary}

\theoremstyle{definition}
\newtheorem{definition}[remark]{Definition}

\numberwithin{equation}{remark}
\newtheoremstyle{void} {.5\baselineskip plus .2\baselineskip minus .2\baselineskip} {.5\baselineskip plus .2\baselineskip minus .2\baselineskip} {\normalfont} {} {\bfseries} {.}
{5pt plus 1pt minus 1pt} 
{\thmname{#1 }\thmnumber{#2}\thmnote{.~{#3}}}
\theoremstyle{void}
\newtheorem{situation}[remark]{}
\newtheorem{method}[remark]{Method}

\usepackage[doi=false,isbn=false,sorting=nyt,url=false,eprint=false,backref=true,
style=alphabetic
]{biblatex}

\addbibresource{/Users/dzhang/personal/teaching/mybib.bib}

\subjclass{Primary: 14F45; Secondary: 14C17, 14F20}

\title[Asymptotic Singular Betti Bounds]{Asymptotic Betti bounds for hypersurfaces in a singular variety}

\author{Xuanyu Pan}
\address{Beijing Linear Stack Technologies Co., Ltd.}
\email{panxuanyu1985@163.com}
\author{Dingxin Zhang}
\address{Center for Mathematics and Interdisciplinary Sciences, Fudan
  University; and  Shanghai Institute for Mathematics and
  Interdisciplinary Sciences (SIMIS), Shanghai 200433, China}
\email{dingxinzhang@fudan.edu.cn}

\author{Xiping Zhang} \address{School of Mathematical Sciences and Key
  Laboratory of Intelligent Computing and Applications (Ministry of
  Education), Tongji University, Shanghai 200092, China}
\email{xzhmath@gmail.com}

\begin{document}
\begin{abstract}
We show that for any degree \(d\) hypersurface \(Y \subset X\) in a
possibly singular projective variety \(X\), the total Betti number of
\(Y\) is bounded by \(3\deg(X)\cdot d^n + C\cdot d^{n-1}\) for some
explicit constant \(C > 0\) independent of \(d\) and \(Y\).  When
\(X\) is a local complete intersection, the bound improves to
\(\deg(X)\cdot d^n + C\cdot d^{n-1}\).  In this case, the bound is
asymptotically sharp.  Similar bounds are also established for
general constructible sheaves.
\end{abstract}

\maketitle
\tableofcontents
\section{Introduction}
\renewcommand{\theremark}{\thesection.\arabic{remark}}

\begin{situation}
Let \(k\) be an algebraically closed field of arbitrary
characteristic.  (Topologists may take
\(k = \mathbb{C}\).)  Let \(X \subset \mathbb{P}^N\) be a possibly
singular projective variety over \(k\).
The main aim of this paper is to establish a uniform upper bound for
the total Betti number
\[
B(Y,\mathbb{F}_{\ell}) = \sum_{i} \dim \mathrm{H}^i(Y;\mathbb{F}_{\ell}),
\]
where
\begin{itemize}
\item \(\ell\) is a prime number invertible in \(k\),
\item \(\mathbb{F}_{\ell}\) is the finite field with \(\ell\) elements,
\(\mathrm{H}^{\ast}(-, \mathbb{F}_{\ell})\) denotes étale
cohomology, and
\item \(Y \subset X\) is the intersection of \(X\) with a
degree \(d\) hypersurface in \(\mathbb{P}^N\).
\end{itemize}

We are particularly interested in the asymptotic behavior of
\[
B_X(d) \coloneq \sup \left\{ B(Y,\mathbb{F}_{\ell}) : Y \text{ is a degree } d \text{ hypersurface in } X \right\}
\]
as \(d \to \infty\).

When \(k = \mathbb{C}\), the étale cohomology groups above are
isomorphic to the singular cohomology of the corresponding complex
analytic space with coefficients in \(\mathbb{F}_{\ell}\).  In
particular,
\[
B(Y,\mathbb{F}_{\ell}) = \sum_{i} \dim_{\mathbb{F}_{\ell}} \mathrm{H}^i(Y^{\mathrm{an}}; \mathbb{F}_{\ell})
\geq \sum_{i} \dim_{\mathbb{Q}} \mathrm{H}^i(Y^{\mathrm{an}}; \mathbb{Q}).
\]
Thus, bounds for \(B(Y, \mathbb{F}_{\ell})\) truly
concern the topology of algebraic varieties.

Beyond its intrinsic topological interest, a high-quality uniform
upper bound has applications in the arithmetic of algebraic varieties
and in analytic number theory.
\end{situation}

\begin{situation}\label{situation:pn-case}
When \(X = \mathbb{P}^N\), this problem was studied by Milnor~\cite{milnor_betti} and
Thom~\cite{thom_homology-of-real-varieties} for complex varieties (for singular cohomology
with coefficients in \(\mathbb{Q}\) or \(\mathbb{F}_2\)), and by Katz~\cite{katz_betti} in general.  Many others have contributed as well.  In the context of random real
complete intersections, a related question was considered by Ancona~\cite{ancona_topology-random-real-ci}.
More recently, Hu and Teyssier~\cite{hu-teyssier_betti} studied the asymptotics of Betti numbers
from a different perspective, using ramification theory.

In~\cite{maxim-paunescu-tibar_vanishing-cohomology-hypersurfaces} and
\cite{wan-zhang_betti-number-bounds-for-varieties-and-exponential-sums0},
asymptotically optimal bounds were established.  There exists a
constant \(C\) such that for any degree \(d\) hypersurface
\(Y \subset \mathbb{P}^N\), we have
\[
B(Y,\mathbb{F}_{\ell}) \leq d^N + C d^{N-1}.
\]
Moreover, the constant \(C\) can be made quite explicit by the methods
of loc.\ cit. In addition, in
\cite{wan-zhang_betti-number-bounds-for-varieties-and-exponential-sums0},
we also obtain asymptotically optimal bounds for the Betti numbers of
complete intersections in affine or projective space.
\end{situation}

\begin{situation}[Betti number bounds]
In this paper, we establish analogous bounds for hypersurfaces in an
\emph{arbitrary} projective variety \(X\).  When \(X\) is a local
complete intersection, the quality of our upper bound matches that in
the projective space case.  For varieties with arbitrary
singularities, the bound is slightly weaker, but nevertheless remains
remarkably sharp.
\end{situation}

\begin{theorem1}[\(\Leftarrow\) Theorem~\ref{theorem:arbitrary}]
\label{theorem1}
Suppose \(X \subset \mathbb{P}^{N}\) is an \(n\)-dimensional
projective variety.  Then there exists a constant \(C > 0\) such that,
for any degree \(d\) hypersurface \(Y \subset X\),
\[
B(Y, \mathbb{F}_{\ell}) \leq 3 \deg(X) \cdot d^n + C\cdot d^{n-1}.
\]
Here, \(\deg(X)\) denotes the degree of \(X \subset \mathbb{P}^N\).
\end{theorem1}

\begin{theorem1}[\(\Leftarrow\) Theorem~\ref{theorem:lci}]
\label{theorem2}
Suppose \(X \subset \mathbb{P}^{N}\) is an irreducible projective
variety of dimension \(n\) with at worst \emph{local complete intersection}
singularities.  Then there exists a constant \(C > 0\) such that, for
any degree \(d\) hypersurface \(Y \subset X\),
\[
B(Y, \mathbb{F}_{\ell}) \leq \deg(X) \cdot d^n + C\cdot d^{n-1}.
\]
\end{theorem1}

\begin{remark}
\begin{enumerate}[wide]
\item In the theorems above, the coefficient \(\mathbb{F}_{\ell}\) may
be replaced by any \(\ell\)-adic coefficient field \(\Lambda\), such
as any finite field of characteristic \(\ell\), or finite extensions
of \(\mathbb{Q}_{\ell}\), or \(\overline{\mathbb{Q}}_{\ell}\).
\item In the main text, we shall prove a generalization of
Theorem~\ref{theorem1} for any plain constructible sheaves on a
variety, and a generalization of Theorem~\ref{theorem2} for lisse
sheaves on a local complete intersection.
\end{enumerate}
\end{remark}

Theorems~\ref{theorem1} and~\ref{theorem2} provide precise asymptotic
estimates for the Betti numbers of degree \(d\) hypersurfaces in an
\(n\)-dimensional projective variety \(X\) as \(d \to \infty\).  When
\(X\) is an arbitrary projective variety, Theorem~\ref{theorem1}
shows that
\begin{equation}
\label{eq:asymp-general}
B_X(d) \coloneq \sup \left\{ B(Y, \mathbb{F}_{\ell}) : Y \text{ is a degree } d
\text{ hypersurface in } X \right\}
\asymp d^n \quad \text{as } d \to \infty.
\end{equation}
If \(X\) is a purely \(n\)-dimensional irreducible local complete
intersection, Theorem~\ref{theorem2} refines this to the sharper
asymptotic
\begin{equation}
\label{eq:asymp-lci}
B_X(d) \sim \deg(X) \cdot d^n \quad \text{as } d \to \infty.
\end{equation}
In fact, for a \emph{general} degree \(d\) hypersurface
\(Y \subset X\) where \(X\) is a local complete intersection, one
has an explicit estimate
\[
|B(Y, \mathbb{F}_{\ell}) - \deg(X) \cdot d^n | \leq C \cdot d^{n-1}.
\]
See Lemma~\ref{lemma:middle-general-estimate}.

The asymptotic relation \eqref{eq:asymp-lci} generalizes the
classical bounds for the Betti numbers of degree \(d\) hypersurfaces
in \(\mathbb{P}^N\) (i.e., \(\deg(X) = 1\)), as discussed in
\ref{situation:pn-case}.

Theorems~\ref{theorem1} and~\ref{theorem2} are deduced from a more
general result regarding the Betti numbers of \emph{perverse} sheaves
restricting to a hypersurface, although, in the context of
Theorem~\ref{theorem1}, the shifted constant sheaf
\(\mathbb{F}_{\ell,X}[n]\) on an \(n\)-dimensional singular variety is
\emph{not} perverse in general!

\begin{theorem1}[\(\Leftarrow\) Theorem~\ref{theorem:main}]
\label{theorem3}
Let \(\mathbb{P}\) be a smooth projective variety.  Let
\(\mathcal{P}\) be a perverse \(\mathbb{F}_{\ell}\)-sheaf or a
perverse \(\overline{\mathbb{Q}}_{\ell}\)-sheaf on \(\mathbb{P}\),
whose support has dimension \(n\).  Then there exist explicit
constants \(A\) and \(B\) such that for any degree \(d\) hypersurface
\(Y \subset \mathbb{P}\), we have
\[
\sum_i \dim \mathrm{H}^i(Y; \mathcal{P}|_Y) \leq A \cdot d^n + B \cdot d^{n-1}.
\]
\end{theorem1}

Therefore, we have the following asymptotic bound:
\[
B_{\mathbb{P}}(\mathcal{P}, d)
\coloneq \sup\left\{
\sum_i \dim \mathrm{H}^i(Y,\mathcal{P}|_Y) : Y \text{ is a degree } d \text{ hypersurface in } \mathbb{P}
\right\}
\asymp d^n \quad \text{as } d \to \infty,
\]
where \(n = \dim \operatorname{Supp}\mathcal{P}\).

In Theorem~\ref{theorem:main}, the constant \(A\) will be described
explicitly in terms of certain refined invariants of \(\mathcal{P}\).
The constant \(B\) is slightly more complicated than \(A\)
but is ultimately determined by
some characteristic numbers associated to \(\mathcal{P}\).

\begin{situation}[Uniformity in \(\ell\)]
For complex algebraic geometers and topologists, the
above summarizes the contents of the paper that might
interest them.  Analytic number theorists will probably also
find Theorems~\ref{theorem1}, \ref{theorem2}, and \ref{theorem3}
useful for certain applications, such as in equidistribution theorems.

However, for arithmetic algebraic geometers, there is a more delicate
issue: should the constants appearing in these bounds depend on the
auxiliary prime~\(\ell\)? Motivated by the long-standing conjectures
on the independence of~\(\ell\), it is natural to ask whether our
bounds are uniform in~\(\ell\).  We answer this question
affirmatively: all constants in the theorems can be taken to be
independent of~\(\ell\).
\end{situation}

\begin{theorem1}[\(\Leftarrow\) Theorem~\ref{theorem:independent-of-ell-for-main}, Corollary~\ref{corollary:independent-of-ell-for-arbitrary}]
The constants in Theorems~\ref{theorem1} and~\ref{theorem2} can be
chosen to be independent of~\(\ell\). If \(k\) is an algebraic
closure of a finite field, and \(\mathcal{P}\) is a
\(\overline{\mathbb{Q}}_{\ell}\)-perverse sheaf that is part of a
compatible system, then the constants \(A\) and \(B\) in
Theorem~\ref{theorem3} can also be chosen to be independent of~\(\ell\).
\end{theorem1}

\begin{situation}[Organization]
To keep this introduction short, we defer the technical discussion on
existing methods for bounding Betti numbers and the novel aspects of
this work to \S\ref{sec:howto}.  Section~\ref{sec:preliminaries}
provides the necessary background on perverse sheaves and
characteristic classes.  In Section~\ref{sec:proper-degeneration}, we
prove the key technical result, the Proper Degeneration
Lemma~\ref{lemma:proper-degeneration}.  Section~\ref{sec:main-theorem}
establishes the main theorem, giving bounds for the Betti numbers of
perverse sheaves restricted to hypersurfaces.
Section~\ref{sec:applications} applies these bounds to lisse sheaves.
Finally, Section~\ref{sec:l-independence} examines the dependence of
our estimates on the coefficient field and proves uniformity in
\(\ell\).
\end{situation}

\begin{situation}[Acknowledgment]
We are grateful to Michele Ancona and Junyan Cao for raising the
question of asymptotic upper bounds for hypersurfaces in arbitrarily
singular varieties.  Their question motivated this work.  We thank
Haoyu Hu, Daqing Wan, and Yigeng Zhao for valuable conversations and
helpful suggestions.  D.~Zhang is partially supported by the National
Key R\&D Program of China (No.~2022YFA1007100).  X.~Zhang is supported
by National Natural Science Foundation of China (No.~12201463)

\end{situation}

\section{How to obtain Betti bounds?}
\label{sec:howto}

We briefly survey existing methods for bounding Betti numbers,
analyzing their strengths and limitations.  We then explain what's new
in our approach.  Readers not interested in these technical
discussions may wish to proceed directly to
\ref{situation:conventions}

\begin{method}[Chain-level counting \cite{bombieri_exponential-sums-1,bombieri_exponential-sums-2,adolphson-sperber_degree-l-function}]
\label{method:chain-level}
In the Morse theory approach used by Milnor and Thom, the strategy is
to construct a Morse function on \(X\) and estimate Betti numbers by
counting the critical points, which can be bounded via Bézout's
theorem.  At first glance, this method appears too topological to
extend to varieties over fields of positive characteristic, and, as we
now know, the bounds given by Milnor and Thom are non-optimal even for
complex varieties.  Nevertheless, the underlying ``chain-level
counting'' philosophy can be adapted to the framework of Dwork theory
and rigid cohomology.

By a standard reduction (``Cayley's trick''), the problem of
estimating the Betti numbers of a variety defined by a regular
function can be reduced to estimating the Betti numbers of certain isocrystals associated to
certain exponential sums.

For exponential sums over an algebraic torus,
Bombieri~\cite{bombieri_exponential-sums-2} and
Adolphson--Sperber~\cite{adolphson-sperber_degree-l-function}
construct explicit chain complexes that compute the associated
cohomology, along with a detailed description of the Frobenius
action at the chain level.  This explicitness allows them to estimate
the number of Frobenius eigenvalues with small \(p\)-adic slopes
directly from the chain complex.  Because there are a priori upper bounds
on the slopes of Frobenius acting on cohomology, and since the
chain-level Frobenius is so well understood in this context, one obtains
sharp and effective upper bounds for the Betti numbers.

The primary limitation of this method is its dependence on detailed knowledge
of the infinite matrix representing the chain-level Frobenius, which is so
far achievable only for certain special F-isocrystals over tori or Shimura
curves of PEL type, where excellent Frobenius liftings are available.
\end{method}

\begin{method}[Euler characteristic trick \cite{katz_betti,zhang_betti}]
\label{method:euler-characteristic}
In \cite{katz_betti}, Katz used the weak Lefschetz theorem to develop
a method that reduces estimating Betti numbers to the problem of
bounding Euler characteristics.  The essential idea is that if one has
a uniform upper bound for the Euler characteristic for \emph{all}
hypersurfaces, and the \(n\)-dimensional affine variety \(X\) is
sufficiently mild in its singularities so that the weak Lefschetz
theorem applies, then the difference between the Euler characteristics
of \(X\) and a hyperplane section \(X \cap L\) gives an upper bound
for the middle Betti number of \(X\).  The remaining Betti numbers can
then be controlled by induction using the weak Lefschetz theorem. (We
have re-implemented this strategy in Lemma~\ref{lemma:katz}.)

A prerequisite for this method is the existence of a uniform,
effective upper bound for the Euler characteristic.  In loc.~cit.,
Katz obtained such bounds by using results of Bombieri and
Adolphson--Sperber, who employed Dwork theory (\(p\)-adic methods) to
estimate the Euler characteristic.  Additionally, a certain perversity
condition is required to ensure that the weak Lefschetz theorem
applies; see Lemma~\ref{lemma:weak-lefschetz} and
Lemma~\ref{lemma:gysin}.

Beside the applicability of weak Lefschetz, the above prerequisite on
Euler characteristic is also the main limitation of this approach.  In
the cases Katz treated, such bounds were established via Dwork theory
for tori.  In turn, this boils down to establishing a sharp lower
bound for the Newton polygon associated to the Fredholm determinant of
the chain-level Frobenius operator discussed in
Method~\ref{method:chain-level}.
\end{method}

\begin{method}[Perverse degeneration \cite{wan-zhang_betti-number-bounds-for-varieties-and-exponential-sums0}]
\label{method:degeneration}
When uniform upper bounds for the Euler characteristic are
unavailable, the above method cannot be applied.  However, if the weak
Lefschetz theorem remains valid, one can instead focus on estimating
the dimension of the cohomology that does not arise from hyperplane
sections.  For example, if the variety has local complete intersection
singularities, the shifted constant sheaf is perverse, so the problem
reduces to bounding the middle-dimensional cohomology.

In this context, the so-called perverse degeneration lemma
\cite{katz_perverse-origin,wan-zhang_betti-number-bounds-for-varieties-and-exponential-sums0}
is very useful.  This lemma allows us to control the middle-dimensional
cohomology of an affine hypersurface by relating it to that of a nearby
hypersurface.  Once the middle-dimensional cohomology is bounded,
the weak Lefschetz theorem, together with perversity, allows us to
handle the cohomology in the remaining degrees by induction.

By combining the weak Lefschetz theorem with the perverse degeneration
lemma, one arrives at the following strategy for obtaining Betti number
bounds for hypersurfaces:
\begin{itemize}
\item For a \emph{general} hypersurface \(Y\), the Euler characteristic
usually can be computed explicitly (see Lemma~\ref{lemma:generic-euler-characteristic-estimate}).
The weak Lefschetz theorem then allows
one to determine the middle cohomology group precisely for such \(Y\).
\item The perverse degeneration lemma provides uniform bounds for the
middle-dimensional cohomology of \emph{arbitrary} hypersurfaces, not only
the general ones.
\item Finally, applying the weak Lefschetz theorem inductively (when possible)
enables control of the remaining (non-middle) cohomology groups, provided
that appropriate perversity conditions are satisfied.
\end{itemize}

Method~\ref{method:degeneration} works well if the constant sheaf is a
perverse sheaf, but is insufficient for Theorem~\ref{theorem1}.  In
this case, the shifted constant sheaf is generally not perverse, nor
is its restriction to an arbitrary hypersurface.  Consequently, there
may remain cohomological degrees that are not accessible
simultaneously via the weak Lefschetz theorem and the perverse
degeneration lemma.
\end{method}

\begin{situation}[New methods in this paper]
The approach in this paper is similar to
Method~\ref{method:degeneration}, but introduces two new ingredients.

The first is more conceptual: rather than proving bounds for constant
sheaves, we work with general \emph{perverse} sheaves, see
Theorem~\ref{theorem:main}.  This shift in perspective allows us to
apply the perverse weak Lefschetz theorem to control most cohomology
groups.  Bounds for constant sheaves are then deduced from the
perverse case via dévissage.  See Theorem~\ref{theorem:arbitrary}.

The second new ingredient is more technical: the Proper Degeneration
Lemma~\ref{lemma:proper-degeneration}.  When perversity conditions
hold, this lemma is as effective as the perverse degeneration lemma,
but it has the advantage of applying to cohomology in \emph{all}
degrees, not just the middle one.  However, this greater generality
leads to less optimal bounds.
\end{situation}

\section{Preliminaries}
\label{sec:preliminaries}

\renewcommand{\theremark}{\thesubsection.\arabic{remark}}

\subsection{Notation and conventions}

We begin by introducing the notation and conventions used
throughout this paper.

\begin{situation}
\label{situation:conventions}
Throughout this paper, except in \S\ref{sec:l-independence}, we work
over an algebraically closed field \(k\).  A \emph{variety} over \(k\)
is defined to be a separated scheme of finite type over \(k\).  All
morphisms between varieties are \(k\)-morphisms.

Varieties here are not necessarily assumed to be irreducible or
reduced.  Since the étale topology of a variety \(V\) is the same as
that of its reduction \(V_{\mathrm{red}}\), the property of being
reduced is mostly irrelevant.
\end{situation}

\begin{situation}
Except in \S\ref{sec:l-independence}, we fix a prime number \(\ell\)
invertible in \(k\), and let \(\Lambda\) be an \(\ell\)-adic
coefficient field.  For example, \(\Lambda\) may be a finite extension
of \(\mathbb{F}_{\ell}\), a finite extension of \(\mathbb{Q}_{\ell}\),
or \(\overline{\mathbb{Q}}_{\ell}\).  For a variety \(V\) over \(k\),
let \(D^b_c(V,\Lambda)\) denote the bounded derived category of
constructible complexes of \(\Lambda\)-sheaves on \(V\) (see
\cite[\S1.1]{deligne_weil2}).

A constructible complex \(\mathcal{F} \in D^b_c(V,\Lambda)\) is said
to be \emph{lisse} if, for any \(i \in \mathbb{Z}\),
\(\mathcal{H}^i(\mathcal{F})\) is a lisse sheaf (or in a topologist's
language, a ``local system'') of \(\Lambda\)-modules.  If \(\Lambda\)
is finite, then a lisse sheaf of \(\Lambda\)-modules is just a locally
constant sheaf of \(\Lambda\)-modules with respect to the étale
topology.  If \(\Lambda\) is an \(\ell\)-adic field, we refer the
reader to \cite[(1.1.1b)]{deligne_weil2} for the definition of a lisse
sheaf of \(\Lambda\)-modules.

Let \(f\colon X \to Y\) be a morphism of varieties.  The following
standard six operations are available on \(D^b_c(-, \Lambda)\):
\begin{align*}
  Rf_{!}, Rf_{\ast} &\colon D^b_c(X, \Lambda) \to D^b_c(Y, \Lambda), \\
  f^{\ast}, Rf^{!} &\colon D^b_c(Y, \Lambda) \to D^b_c(X, \Lambda),
\end{align*}
as well as the internal operations \(\otimes\) and \(R\Hom\) on
\(D^b_c(-, \Lambda)\).

We also have the \emph{biduality functor}
\(\mathbb{D}_{X} \colon D^b_c(X, \Lambda)^{\mathrm{opp}} \to D^b_c(X, \Lambda)\),
defined by
\[
\mathbb{D}_{X}(-) = R\Hom(-, Ra^! \Lambda),
\]
where \(a\colon X \to \operatorname{Spec}k\) is the structural
morphism.

\end{situation}

\begin{situation}[Miscellaneous notation]
Let \(\mathcal{F} \in D^{b}_{c}(V,\Lambda)\) be a constructible
complex of \(\Lambda\)-sheaves on \(V\).  We define
\[
B(V,\mathcal{F}) = \sum_{m \in \mathbb{Z}} \dim_{\Lambda}
\mathrm{H}^{m}(V;\mathcal{F})
\]
to be the sum of the Betti numbers of \(V\) with coefficients
in \(\mathcal{F}\).

Let \(Z\) be a closed subvariety of \(V\).  We denote the cohomology
of \(\mathcal{F}\) relative to \(Z\) by
\[
\mathrm{H}^{\ast}(V, Z; \mathcal{F}) = \mathrm{H}^{\ast}(V, j_{!}j^{\ast}\mathcal{F}),
\]
where \(j\colon V \setminus Z \hookrightarrow V\) denotes the
inclusion of the open complement of \(Z\) in \(V\).

We write \(\mathcal{F}(m)\) for the \(m\)th Tate twist of
\(\mathcal{F}\), that is,
\(\mathcal{F}(m) = \mathcal{F} \otimes_{\Lambda} \Lambda(m)\).  Here,
\(\Lambda(-1)\) denotes
\(\mathrm{H}^{1}(\mathbb{G}_{\mathrm{m}};\Lambda)\), while
\(\Lambda(1) = \mathrm{Hom}_{\Lambda}(\Lambda(-1),\Lambda)\).  More
generally, for \(m \geq 1\), we define
\(\Lambda(m) = \Lambda(1)^{\otimes m}\), and for \(m \leq 1\),
\(\Lambda(m) = \Lambda(-1)^{\otimes (-m)}\).  In our geometric
setting, \(\Lambda(1)\) is simply a 1-dimensional \(\Lambda\)-vector
space, but when working in an arithmetic context it is necessary to
keep careful track of the Galois action given by the twist.
\end{situation}

\subsection{Perverse sheaves}
In this subsection, we present some properties of perverse sheaves
that will be used throughout this paper.

\begin{situation}
For basic facts about perverse sheaves, we refer the reader to
\cite{bbd} and
\cite{kiehl-weissauer_weil-conjectures-perverse-sheaves}.  The
\(t\)-structure on \(D^b_c(X, \Lambda)\) associated with the middle
perversity is denoted by
\(({}^{\mathrm{p}}D^{\leq0}_c(X,\Lambda),
{}^{\mathrm{p}}D^{\geq0}_c(X,\Lambda))\).  We use the notations
\({}^{\mathrm{p}}\mathcal{H}^n\), \({}^{\mathrm{p}}\tau_{\leq}\), and
similar conventions as in loc.~cit.  Constructible complexes in
\({}^{\mathrm{p}}D^{\leq0}_c(X, \Lambda)\) are called \emph{perverse
  connective}, and those in \({}^{\mathrm{p}}D^{\geq0}_c(X, \Lambda)\)
are called \emph{perverse coconnective}.  The duality functor
\(\mathbb{D}_{X}\) interchanges the connective and coconnective parts
(see \cite[2.2 and 4.3.1]{bbd}).
\end{situation}

To bound the total Betti numbers of a constructible complex,
we first make a straightforward reduction to the case of perverse
sheaves:

\begin{lemma}\label{lemma:trivial}
Let \(V\) be a variety and \(\iota \colon Z \to V\) a closed
subvariety.  Then for any \(\mathcal{F} \in D^b_c(V,\Lambda)\),
we have
\[
B(Z, \iota^{\ast}\mathcal{F}) \leq
\sum_{i\in \mathbb{Z}}
B\bigl(Z, \iota^{\ast}({}^{\mathrm{p}}\mathcal{H}^i(\mathcal{F}))\bigr).
\]
\end{lemma}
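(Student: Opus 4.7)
The plan is to use dévissage through the perverse $t$-structure on $D^b_c(V, \Lambda)$. Since $\mathcal{F}$ is bounded, only finitely many perverse cohomology sheaves ${}^{\mathrm{p}}\mathcal{H}^i(\mathcal{F})$ are nonzero, so there exist integers $a \leq b$ with ${}^{\mathrm{p}}\tau_{\leq a-1}\mathcal{F} = 0$ and ${}^{\mathrm{p}}\tau_{\leq b}\mathcal{F} = \mathcal{F}$. For each $i$, the perverse truncation gives a distinguished triangle
\[
{}^{\mathrm{p}}\tau_{\leq i-1}\mathcal{F} \longrightarrow {}^{\mathrm{p}}\tau_{\leq i}\mathcal{F} \longrightarrow {}^{\mathrm{p}}\mathcal{H}^i(\mathcal{F})[-i] \xrightarrow{+1}
\]
in $D^b_c(V, \Lambda)$.

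Next I would apply $\iota^{\ast}$, which is a triangulated functor and hence sends this to a distinguished triangle in $D^b_c(Z, \Lambda)$. Taking hypercohomology $\mathrm{H}^{\ast}(Z, -)$ produces a long exact sequence of finite-dimensional $\Lambda$-vector spaces. The key elementary observation is the sub-additivity of dimension in a long exact sequence $\cdots \to A^m \to B^m \to C^m \to A^{m+1} \to \cdots$, namely $\dim B^m \leq \dim A^m + \dim C^m$. Summing over $m$ and noting that the shift $[-i]$ does not affect the total Betti number, I obtain
\[
B\bigl(Z, \iota^{\ast}({}^{\mathrm{p}}\tau_{\leq i}\mathcal{F})\bigr) \leq B\bigl(Z, \iota^{\ast}({}^{\mathrm{p}}\tau_{\leq i-1}\mathcal{F})\bigr) + B\bigl(Z, \iota^{\ast}({}^{\mathrm{p}}\mathcal{H}^i(\mathcal{F}))\bigr).
\]

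Finally, iterating this inequality for $i = a, a+1, \ldots, b$ and using that ${}^{\mathrm{p}}\tau_{\leq a-1}\mathcal{F} = 0$ and ${}^{\mathrm{p}}\tau_{\leq b}\mathcal{F} = \mathcal{F}$ yields exactly the claimed bound. There is no substantive obstacle: the argument is a routine dévissage, whose content is that total Betti-number bounds reduce along the perverse filtration. The only point worth flagging is that $\iota^{\ast}$ is used purely as a triangulated functor, with no exactness property with respect to the perverse $t$-structure required — which is why the lemma holds for an arbitrary closed immersion $\iota$.
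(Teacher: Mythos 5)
Your proof is correct and is essentially the same dévissage the paper uses: the paper phrases it as an induction on the length of the perverse amplitude using the triangle \({}^{\mathrm{p}}\tau_{\leq b-1}\mathcal{F} \to \mathcal{F} \to {}^{\mathrm{p}}\mathcal{H}^b(\mathcal{F})[-b]\), while you iterate the same truncation triangles explicitly, but the content (apply \(\iota^{\ast}\), take the long exact sequence, use sub-additivity of total Betti numbers, note shifts are harmless) is identical.
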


\begin{proof}
Given any constructible complex \(\mathcal{F}\), there exist integers
\(a \leq b\) such that
\({}^{\mathrm{p}}\mathcal{H}^i(\mathcal{F}) = 0\) unless \(a \leq i \leq b\).
We proceed by induction on the integer \(b-a\), which we refer to as the
length of \(\mathcal{F}\).

If \(b-a = 0\), then, up to shift, \(\mathcal{F}\) itself is a perverse
sheaf.  In this case, the result is immediate.

For the induction step, let \(b\) be the largest index such that
\({}^{\mathrm{p}}\mathcal{H}^b(\mathcal{F}) \neq 0\).
There is a distinguished triangle
\[
{}^{\mathrm{p}}\tau_{\leq b-1} \mathcal{F} \to \mathcal{F} \to {}^{\mathrm{p}}\mathcal{H}^b(\mathcal{F}),
\]
which, after applying \(\iota^{\ast}\), yields
\[
\iota^{\ast}({}^{\mathrm{p}}\tau_{\leq b-1} \mathcal{F}) \to \iota^{\ast} \mathcal{F}
\to \iota^{\ast}({}^{\mathrm{p}}\mathcal{H}^b(\mathcal{F})).
\]
Therefore,
\[
B(Z, \iota^{\ast} \mathcal{F}) \leq
B(Z, \iota^{\ast}({}^{\mathrm{p}}\tau_{\leq b-1} \mathcal{F})) +
B(Z, \iota^{\ast}({}^{\mathrm{p}}\mathcal{H}^b(\mathcal{F}))).
\]
The length of \({}^{\mathrm{p}}\tau_{\leq b-1} \mathcal{F}\) is
smaller than that of \(\mathcal{F}\), so we can apply the inductive
hypothesis to this term.  Thus, by induction, the lemma is proved.
\end{proof}

Next, recall the following fundamental fact:

\begin{theorem}[Artin's vanishing theorem]
\label{theorem:relative-affine-lefschetz}
Let \(\pi\colon X \to S\) be an affine morphism of varieties.
Let \(\mathcal{F} \in {}^{\mathrm{p}}D_c^{\geq 0}(X, \Lambda)\) be perverse
coconnective on \(X\).  Then \(R\pi_{!}\mathcal{F}\) is perverse
coconnective on \(S\).
\end{theorem}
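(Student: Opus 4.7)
The plan is to reduce the statement, via Verdier duality, to the dual assertion that $R\pi_{\ast}$ sends ${}^{\mathrm{p}}D_c^{\leq 0}(X,\Lambda)$ into ${}^{\mathrm{p}}D_c^{\leq 0}(S,\Lambda)$ whenever $\pi$ is affine. The two forms are exchanged by the natural isomorphism $\mathbb{D}_S \circ R\pi_{!} \cong R\pi_{\ast} \circ \mathbb{D}_X$, together with the fact that $\mathbb{D}$ swaps perverse connective with perverse coconnective complexes. I would henceforth work with the dual form.

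For the dual form, I would use the support-dimension characterization of the perverse $t$-structure: a complex $\mathcal{G}$ lies in ${}^{\mathrm{p}}D_c^{\leq 0}$ if and only if $\dim \operatorname{Supp}\mathcal{H}^q(\mathcal{G}) \leq -q$ for every $q$. By the hypercohomology spectral sequence
\[
E_2^{p,q} = \mathcal{H}^p(R\pi_{\ast} \mathcal{H}^q(\mathcal{G})) \Longrightarrow \mathcal{H}^{p+q}(R\pi_{\ast} \mathcal{G}),
\]
the target bound reduces to the following elementary statement: for any constructible $\Lambda$-sheaf $\mathcal{F}$ on $X$ whose support has dimension at most $e$, one has $\dim \operatorname{Supp} R^j \pi_{\ast} \mathcal{F} \leq e-j$ for every $j$.

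This last statement is a relative version of Artin's classical theorem (SGA~4, Exposé~XIV), which asserts that constructible étale cohomology of an affine variety of dimension $d$ vanishes above degree $d$. To deduce it, I would argue stalk-wise: for $s \in S$ with $c = \dim \overline{\{s\}}$, the stalk $(R^j \pi_{\ast} \mathcal{F})_{\bar s}$ is, by a standard limit computation, cohomology of the strict-henselian neighborhood of the fibre $\pi^{-1}(\bar s)$ with coefficients in $\mathcal{F}$; this neighborhood is affine (since $\pi$ is), and the support of $\mathcal{F}$ on it has dimension at most $e-c$ for $s$ generic in its stratum, so the classical Artin vanishing forces the stalk to vanish when $j > e-c$. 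Feeding this through a stratification of $\pi(\operatorname{Supp}\mathcal{F})$ yields the required support bound.

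The main obstacle is the relative-dimension bookkeeping in this final step: the stalk-wise vanishing threshold depends on $c = \dim\overline{\{s\}}$, and converting this pointwise information into the uniform inequality $\dim \operatorname{Supp} R^j\pi_{\ast}\mathcal{F} \leq e-j$ requires a careful stratification of $\pi(\operatorname{Supp}\mathcal{F})$ together with repeated invocations of upper-semicontinuity of fibre dimension. The remaining ingredients — the Verdier-duality reduction, the spectral-sequence devissage, and the appeal to classical Artin vanishing — are standard.
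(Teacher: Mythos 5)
The paper offers no proof of this statement beyond citing SGA\,4, Exposé~XIV, 3.1 and BBD, Théorème~4.1.1, so there is nothing internal to compare against; I will therefore assess your sketch on its own terms. Your reduction is exactly the standard one: Verdier duality converts the claim into the right \(t\)-exactness of \(R\pi_{\ast}\) on \({}^{\mathrm{p}}D_c^{\leq 0}\), the support characterization of the perverse \(t\)-structure plus the hypercohomology spectral sequence then reduce everything to the estimate \(\dim \operatorname{Supp} R^j\pi_{\ast}\mathcal{F} \leq e - j\) for a constructible sheaf with \(e\)-dimensional support. That estimate is precisely SGA\,4, XIV, 3.1, and up to this point your argument is correct and is how BBD themselves deduce 4.1.1.

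The gap is in your proof of that last estimate. The stalk \((R^j\pi_{\ast}\mathcal{F})_{\bar s}\) is the cohomology of the henselian tube \(X \times_S S_{(\bar s)}\), not of the geometric fibre \(\pi^{-1}(\bar s)\). The tube is indeed an affine scheme, but it is affine over a strictly henselian local ring, not a variety over a separably closed field, so the ``classical'' Artin vanishing theorem does not apply to it; the vanishing range for such schemes is governed by a dimension function and is essentially the relative statement you are trying to prove, so invoking it would be circular. To replace the tube by the fibre you need the formation of \(R\pi_{\ast}\mathcal{F}\) to commute with base change at \(\bar s\). Deligne's generic base change only guarantees this over a dense open \(U \subset S\), and the usual noetherian-induction stratification does not rescue you: on \(S \setminus U\) it controls \(R\pi'_{\ast}\) of the \emph{restricted} sheaf, which is not \(i^{\ast}R\pi_{\ast}\mathcal{F}\) (proper base change is unavailable for affine \(\pi\)). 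Since a component of \(\operatorname{Supp} R^j\pi_{\ast}\mathcal{F}\) may well have its generic point in the bad locus, the pointwise vanishing you need cannot be established this way at every relevant point. This is why SGA\,4 proves the estimate not stalk-wise but by dévissage: embed \(X\) into an affine space over \(S\), fibre \(\mathbb{A}^n_S \to \mathbb{A}^{n-1}_S\) in relative affine curves, and induct on the dimension function, with the relative curve case as the genuine input. Your reduction steps are fine; the final step needs this dévissage (or simply the citation the paper uses) rather than the stalk-wise appeal to absolute Artin vanishing.
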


\begin{proof}
See \cite[Exposé~XIV,~3.1]{sga4} or
\cite[Théorème~4.1.1]{bbd}.
\end{proof}

We then prove some elementary properties about how perversity changes under
some special operations.

\begin{lemma}\label{lemma:check-connectivity}
Let \(f\colon U \to V\) be an étale and surjective morphism.  Let
\(\mathcal{F} \in D^b_c(V, \Lambda)\).  Suppose
\(f^{\ast}\mathcal{F} \in {}^{\mathrm{p}}D^{\leq 0}_c(U, \Lambda)\)
(resp.\ in \({}^{\mathrm{p}}D^{\geq 0}_c(U, \Lambda)\), or is perverse).
Then \(\mathcal{F}\) lies in \({}^{\mathrm{p}}D^{\leq 0}_c(V, \Lambda)\)
(resp.\ in \({}^{\mathrm{p}}D^{\geq 0}_c(V, \Lambda)\), or is perverse).
\end{lemma}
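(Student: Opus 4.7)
The plan is to reduce each of the three assertions to the support-condition characterization of the perverse $t$-structure on the ordinary cohomology sheaves and their Verdier duals. Two elementary properties of an étale morphism $f\colon U \to V$ drive the argument: the pullback $f^{\ast}$ is $t$-exact for the ordinary $t$-structure, so $\mathcal{H}^i(f^{\ast}\mathcal{F}) \cong f^{\ast}\mathcal{H}^i(\mathcal{F})$ for every $i$; and $f^{!}$ is canonically isomorphic to $f^{\ast}$, so $f^{\ast}$ commutes with the biduality functor, giving a canonical isomorphism $\mathbb{D}_U f^{\ast}\mathcal{F} \cong f^{\ast}\mathbb{D}_V\mathcal{F}$.

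I first treat the perverse connective case. Recall that $\mathcal{F}$ lies in ${}^{\mathrm{p}}D^{\leq 0}_c(V,\Lambda)$ precisely when $\dim \operatorname{Supp} \mathcal{H}^i(\mathcal{F}) \leq -i$ for every $i \in \mathbb{Z}$. Combining $t$-exactness of $f^{\ast}$ with the identity $\operatorname{Supp} f^{\ast}\mathcal{G} = f^{-1}(\operatorname{Supp}\mathcal{G})$ for any constructible sheaf $\mathcal{G}$ on $V$, the support of $\mathcal{H}^i(f^{\ast}\mathcal{F})$ equals $f^{-1}\bigl(\operatorname{Supp}\mathcal{H}^i(\mathcal{F})\bigr)$. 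Since $f$ is étale (hence of relative dimension zero) and surjective, one has $\dim f^{-1}(Z) = \dim Z$ for every constructible subset $Z \subseteq V$. Hence the support inequalities witnessing that $f^{\ast}\mathcal{F}$ is perverse connective translate verbatim into the corresponding inequalities on $V$, and $\mathcal{F}$ is perverse connective.

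For the perverse coconnective case, I dualize. If $f^{\ast}\mathcal{F} \in {}^{\mathrm{p}}D^{\geq 0}_c(U,\Lambda)$, then $\mathbb{D}_U f^{\ast}\mathcal{F}$ is perverse connective on $U$. Using the isomorphism $\mathbb{D}_U f^{\ast}\mathcal{F} \cong f^{\ast}\mathbb{D}_V\mathcal{F}$, the previous paragraph applied to $\mathbb{D}_V\mathcal{F}$ shows that $\mathbb{D}_V\mathcal{F}$ is perverse connective on $V$, so $\mathcal{F}$ itself is perverse coconnective. The perverse case is the conjunction of the two. The only ingredients that require a little care are the dimension-preservation property of étale surjective morphisms and the identification $f^{!} \cong f^{\ast}$ for étale $f$; both are standard, so I do not anticipate any substantial obstacle.
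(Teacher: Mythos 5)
Your proposal is correct and follows essentially the same route as the paper: the connective case is handled by showing that an étale surjective morphism preserves the dimensions of the supports of the ordinary cohomology sheaves (the paper phrases this via generic points, you via $\operatorname{Supp} f^{\ast}\mathcal{G} = f^{-1}(\operatorname{Supp}\mathcal{G})$, which is the same fact), and the coconnective case is deduced by Verdier duality together with $f^{!} \cong f^{\ast}$ for étale $f$.
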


\begin{proof}
We treat only the case of \({}^{\mathrm{p}}D_c^{\leq 0}\); the other
cases follow from Verdier duality and the fact that for an étale
morphism \(f\), we have \(f^! = f^*\).

Since \(f\) is surjective, for any generic point \(\eta\) of
\(\operatorname{Supp} \mathcal{H}^i(\mathcal{F})\), there exists a
point \(\eta_U\) lying above it.  The support
\(\operatorname{Supp} \mathcal{H}^i(f^* \mathcal{F})\) contains the
closure of \(\eta_U\), which has the same dimension as the closure of
\(\eta\), by the étale property.  Outside the closures of these
\(\eta_U\), the stalks of \(\mathcal{H}^i(f^*\mathcal{F})\) are zero.
Therefore,
\[
\dim \operatorname{Supp} \mathcal{H}^i(f^*\mathcal{F})
= \dim \operatorname{Supp} \mathcal{H}^i(\mathcal{F}) .
\]
Thus, if \(\mathcal{F}\) is connective in the perverse sense, so is
\(f^*\mathcal{F}\).
\end{proof}

\begin{lemma}\label{lemma:estimate-perv-dergee}
Let \(\mathcal{P}\) be a perverse sheaf on a variety \(V\).  Let
\(\iota\colon D \to V\) be the inclusion morphism of a Cartier
divisor.  Then
\({}^{\mathrm{p}}\mathcal{H}^{e}(\iota^{\ast}\mathcal{P}[-1]) = 0\)
for \(e \neq 0,1\), and
\({}^{\mathrm{p}}\mathcal{H}^{e}(i^{!}\mathcal{P}[1]) = 0\) for
\(e \neq 0,-1\).
\end{lemma}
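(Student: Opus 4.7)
The plan is to reduce the statement to the fact that the open complement $U = V \setminus D$ is \emph{affine} over $V$, and then to invoke Artin's vanishing (Theorem~\ref{theorem:relative-affine-lefschetz}) together with the standard localization triangles.

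First, because $D$ is a Cartier divisor, the open immersion $j\colon U \hookrightarrow V$ is an affine morphism: locally $V$ is affine and $D$ is cut out by one equation, so $U$ is the corresponding principal open. Since $j$ is étale, $j^{\ast}\mathcal{P}$ is perverse on $U$. Artin's vanishing theorem applied to the affine morphism $j$ then gives $Rj_{!}\,j^{\ast}\mathcal{P} \in {}^{\mathrm{p}}D_c^{\geq 0}(V,\Lambda)$, and the Verdier-dual statement gives $Rj_{\ast}\,j^{\ast}\mathcal{P} \in {}^{\mathrm{p}}D_c^{\leq 0}(V,\Lambda)$. On the other hand, for any open immersion $Rj_{!}$ is automatically perverse right $t$-exact and $Rj_{\ast}$ perverse left $t$-exact (the support conditions defining the perverse $t$-structure are manifestly preserved by extension by zero, and dually). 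Combining both, $Rj_{!}\,j^{\ast}\mathcal{P}$ and $Rj_{\ast}\,j^{\ast}\mathcal{P}$ are both perverse sheaves on $V$.

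Next I would feed this into the two canonical localization triangles
\[
Rj_{!}\,j^{\ast}\mathcal{P} \to \mathcal{P} \to \iota_{\ast}\iota^{\ast}\mathcal{P},\qquad
\iota_{\ast}\iota^{!}\mathcal{P} \to \mathcal{P} \to Rj_{\ast}\,j^{\ast}\mathcal{P}.
\]
In each triangle two of the three terms are perverse sheaves, so the long exact sequence in perverse cohomology collapses and one reads off that ${}^{\mathrm{p}}\mathcal{H}^{n}(\iota_{\ast}\iota^{\ast}\mathcal{P}) = 0$ unless $n \in \{-1, 0\}$, and ${}^{\mathrm{p}}\mathcal{H}^{n}(\iota_{\ast}\iota^{!}\mathcal{P}) = 0$ unless $n \in \{0,1\}$. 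Since $\iota$ is a closed immersion, $\iota_{\ast}$ is $t$-exact for the perverse $t$-structure, so these vanishings descend to $\iota^{\ast}\mathcal{P}$ and $\iota^{!}\mathcal{P}$. Shifting by $\pm 1$ yields precisely the statement of the lemma.

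I do not foresee any genuine obstacle. The only points deserving care are the identification of $j$ as an affine morphism (which is the single place where the Cartier divisor hypothesis is used) and the clean bookkeeping of perverse degrees in the two long exact sequences.
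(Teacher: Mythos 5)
Your proposal is correct and is essentially the paper's argument: both rest on the fact that the open immersion $j\colon V\setminus D\hookrightarrow V$ is affine, so that $j_{!}j^{\ast}\mathcal{P}$ (resp.\ $Rj_{\ast}j^{\ast}\mathcal{P}$) is perverse, and then read off the claim from the localization triangle. The only cosmetic differences are that the paper cites the $t$-exactness of $j_{!}$ for affine quasi-finite morphisms directly and deduces the $\iota^{!}$ statement by Verdier duality, whereas you rederive the $t$-exactness from Artin vanishing and treat the second triangle explicitly.
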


\begin{proof}
The second statement follows from the first for
\(\mathbb{D}_{V}(\mathcal{P})\).  So it suffices to prove the first
statement.

Let \(j\colon U \to V\) be the inclusion morphism of the complement of
\(D\).  Then \(j\) is both affine and quasi-finite, hence \(j_{!}\) is
t-exact
(\cite[Corollary~III~6.2]{kiehl-weissauer_weil-conjectures-perverse-sheaves}).
Thus, \(j_{!}j^{\ast}\mathcal{P}\) is a perverse sheaf on \(V\).
Applying \({}^{\mathrm{p}}\mathcal{H}^{\ast}\) to the excision
triangle
\[
\mathcal{P}[-1] \to \iota_{\ast}\iota^{\ast}\mathcal{P}[-1] \to j_{!}j^{\ast}\mathcal{P},
\]
gives the following exact sequence
\[
{}^{\mathrm{p}}\mathcal{H}^{e-1}(\mathcal{P}) \to {}^{\mathrm{p}} \mathcal{H}^e(\iota^{\ast}\mathcal{P}[-1])
\to {}^{\mathrm{p}}\mathcal{H}^e(j_{!}j^{\ast}\mathcal{P}).
\]
Since \(\mathcal{P}\) and \(j_{!}j^{\ast}\mathcal{P}\) are perverse,
the result follows.
\end{proof}

\begin{lemma}\label{lemma:general-hyperplane-preserve-perv}
Let \(V \subset \mathbb{P}^N\) be a quasi-projective variety.
Let \(\mathcal{F}\) be a constructible complex on \(V\).
Then for a general hyperplane section \(\iota\colon L \hookrightarrow V\),
we have
\[
\iota^{\ast}({}^{\mathrm{p}}\mathcal{H}^{i}(\mathcal{F})[-1]) \simeq {}^{\mathrm{p}}\mathcal{H}^{i}(\iota^{\ast}\mathcal{F}[-1]).
\]
\end{lemma}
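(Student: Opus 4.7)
The plan is to establish the stronger statement that, for a general hyperplane $L$, the functor $\iota^{\ast}[-1]\colon D^{b}_{c}(V, \Lambda) \to D^{b}_{c}(L, \Lambda)$ is t-exact with respect to the perverse t-structures, at least on the triangulated subcategory of complexes constructible with respect to an appropriate stratification containing $\mathcal{F}$. Once t-exactness is in hand, the desired isomorphism is formal, since a t-exact functor between triangulated categories with t-structures commutes with the cohomology functors of those t-structures.

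My first step would be to fix a Whitney stratification $\mathcal{S}$ of $V$ adapted to each of the finitely many nonzero perverse cohomology sheaves ${}^{\mathrm{p}}\mathcal{H}^{i}(\mathcal{F})$ (possible since $\mathcal{F}$ is bounded). A Bertini-type argument shows that a general hyperplane $L \subset \mathbb{P}^{N}$ meets every stratum $S \in \mathcal{S}$ transversely; in particular no stratum is contained in $L$ and $\dim(L \cap S) = \dim S - 1$. For general $L$, I may additionally arrange that $L$ contains none of the (finitely many) irreducible supports of the simple Jordan--H\"older constituents of the ${}^{\mathrm{p}}\mathcal{H}^{i}(\mathcal{F})$.

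The core step is then: for any $\mathcal{S}$-constructible perverse sheaf $\mathcal{P}$ on $V$, $\iota^{\ast}\mathcal{P}[-1]$ is perverse on $L$. Lemma~\ref{lemma:estimate-perv-dergee} already gives ${}^{\mathrm{p}}\mathcal{H}^{e}(\iota^{\ast}\mathcal{P}[-1]) = 0$ for $e \neq 0, 1$, so the task is to kill the degree-$1$ piece. For this, I would invoke the excision triangle $j_{!}\, j^{\ast}\mathcal{P} \to \mathcal{P} \to \iota_{\ast}\iota^{\ast}\mathcal{P}$ associated with the affine open immersion $j\colon V \setminus L \hookrightarrow V$. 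Since $j$ is affine and quasi-finite, $j_{!}$ is perverse t-exact, so $j_{!}j^{\ast}\mathcal{P}$ is perverse; the long exact sequence of perverse cohomology then identifies $\iota_{\ast}{}^{\mathrm{p}}\mathcal{H}^{1}(\iota^{\ast}\mathcal{P}[-1])$ with the perverse cokernel of the natural map $j_{!}j^{\ast}\mathcal{P} \to \mathcal{P}$. This cokernel is a perverse sheaf on $V$ supported on $L$, and its nonvanishing would force some simple Jordan--H\"older constituent of $\mathcal{P}$ to be supported inside $L$, which is excluded by our choice of $L$.

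With this, $\iota^{\ast}[-1]$ restricts to an exact functor $\mathrm{Perv}_{\mathcal{S}}(V) \to \mathrm{Perv}(L)$, hence is t-exact on the full subcategory of $\mathcal{S}$-constructible complexes by the usual d\'evissage along perverse cohomological length. Applying this t-exactness to the truncation triangles ${}^{\mathrm{p}}\tau_{\leq i}\mathcal{F} \to \mathcal{F} \to {}^{\mathrm{p}}\tau_{\geq i+1}\mathcal{F}$ yields the claimed isomorphism. The main technical obstacle is the vanishing of the degree-$1$ perverse cohomology of $\iota^{\ast}\mathcal{P}[-1]$: this is precisely where the genericity of $L$ — and specifically the avoidance of the supports of simple constituents — enters essentially, everything else being formal manipulation with truncation triangles and the bound from Lemma~\ref{lemma:estimate-perv-dergee}.
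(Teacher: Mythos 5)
Your proof is correct, and its overall skeleton --- reduce by d\'evissage on perverse length to showing that \(\iota^{\ast}[-1]\) carries perverse sheaves to perverse sheaves for general \(L\), then commute with the truncation triangles --- is the same as the paper's. Where you genuinely diverge is in the key perversity step. The paper combines the two halves of Lemma~\ref{lemma:estimate-perv-dergee} with the purity isomorphism \(\iota^{\ast}\mathcal{P}[-1](-1)\simeq \iota^{!}\mathcal{P}[1]\), valid for general \(L\) as a consequence of Deligne's generic base change (quoted from Wan--Zhang), so that \(\iota^{\ast}\mathcal{P}[-1]\) is simultaneously perverse coconnective and connective. You instead kill \({}^{\mathrm{p}}\mathcal{H}^{1}(\iota^{\ast}\mathcal{P}[-1])\) directly: by the excision triangle and the t-exactness of \(j_{!}\) it is the perverse cokernel of \(j_{!}j^{\ast}\mathcal{P}\to\mathcal{P}\), a quotient of \(\mathcal{P}\) supported in \(L\), whose nonvanishing would force a Jordan--H\"older constituent of \(\mathcal{P}\) to have support contained in \(L\) --- excluded for general \(L\). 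This is a standard and perfectly valid alternative; it trades the generic base change input for the artinian/finite-length structure of the category of perverse sheaves, and it makes the genericity condition completely explicit (avoidance of finitely many irreducible supports). Two small remarks: the Whitney stratification and transversality you set up at the outset are never actually used in your argument --- the only genericity you need is that \(L\) contains no support of a simple constituent, together with the fact that \(L\cap V\) is a Cartier divisor in \(V\) so that Lemma~\ref{lemma:estimate-perv-dergee} applies; and the paper's purity route has the side benefit of simultaneously controlling \(\iota^{!}\), which is exploited elsewhere (e.g.\ in Lemma~\ref{lemma:gysin}), whereas your route needs only \(\iota^{\ast}\).
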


\begin{proof}
We proceed by induction on the length of \(\mathcal{F}\), as in the proof
of Lemma~\ref{lemma:trivial}.  If the
length is zero, then after a shift, we can assume \(\mathcal{F}\) is a
perverse sheaf.  By Lemma~\ref{lemma:estimate-perv-dergee},
\(\iota^{\ast}\mathcal{F}[-1]\) is perverse coconnective
and \(R\iota^{!}\mathcal{F}[1]\) is perverse connective.
Since \(L\) is general, the purity isomorphism holds; that is,
there is an isomorphism
\[
\iota^{\ast}\mathcal{F}[-1](-1) \xrightarrow{\sim} \iota^{!}\mathcal{F}[1].
\]
See \cite[Lemma~3.2]{wan-zhang_colevel}.  The claim then follows from
Lemma~\ref{lemma:estimate-perv-dergee}.

For general \(\mathcal{F}\), we first apply a shift and reduce to the
case \(\mathcal{F} \in {}^{\mathrm{p}}D^{\leq0}_{c}(V,\Lambda)\).
There is a distinguished triangle
\[
{}^{\mathrm{p}}\tau_{\leq-1}\mathcal{F} \to \mathcal{F}
\to {}^{\mathrm{p}}\mathcal{H}^{0}(\mathcal{F}).
\]
Applying \(\iota^{\ast}[-1]\) yields
\[
\iota^{\ast}({}^{\mathrm{p}}\tau_{\leq-1}\mathcal{F})[-1]
\to \iota^{\ast}\mathcal{F}[-1]
\to \iota^{\ast}({}^{\mathrm{p}}\mathcal{H}^{0}(\mathcal{F}))[-1].
\]
For general \(L\), the induction hypothesis shows that
\(\iota^{\ast}[-1]\) commutes with the perverse cohomology sheaves of
\({}^{\mathrm{p}}\tau_{\leq-1}(\mathcal{F})\), which are also the
perverse cohomology sheaves of \(\mathcal{F}\).  In particular,
\(\iota^{\ast}({}^{\mathrm{p}}\tau_{\leq-1}(\mathcal{F}))[-1]\) still
lies in \({}^{\mathrm{p}}D^{\leq-1}_{c}(L,\Lambda)\).  Therefore,
\[
\iota^{\ast}({}^{\mathrm{p}}\mathcal{H}^{0}(\mathcal{F}))[-1] =
{}^{\mathrm{p}}\mathcal{H}^{0}(\iota^{\ast}\mathcal{F}[-1]).
\]
This completes the proof.
\end{proof}



Finally, we recapitulate some weak Lefschetz theorems for
perverse sheaves.  The first one is just a consequence of Artin's vanishing theorem.

\begin{lemma}\label{lemma:weak-lefschetz}
Let \(X\) be a projective variety and suppose
\(\mathcal{F} \in {}^{\mathrm{p}}D_c^{\geq0}(X,\Lambda)\) is perverse
coconnective.  Then for any ample hypersurface \(Y\) in \(X\), the
restriction map
\[
\mathrm{H}^{m}(X;\mathcal{F}) \to \mathrm{H}^{m}(Y;\mathcal{F}|_{Y})
\]
is injective for \(m = -1\), and bijective for \(m \leq -2\).
\end{lemma}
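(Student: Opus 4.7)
The plan is to prove the statement by applying Artin's vanishing (Theorem~\ref{theorem:relative-affine-lefschetz}) to the affine complement of \(Y\) in \(X\), feeding the resulting vanishing into the excision long exact sequence.

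First I would set up the excision sequence. Let \(j \colon U \hookrightarrow X\) be the open immersion of \(U = X \setminus Y\), and \(\iota \colon Y \hookrightarrow X\) the closed immersion. The distinguished triangle
\[
j_{!}j^{\ast}\mathcal{F} \to \mathcal{F} \to \iota_{\ast}\iota^{\ast}\mathcal{F}
\]
yields, after applying \(R\Gamma(X, -)\) (and using that \(X\) is proper so that \(R\Gamma(X, j_{!}(-)) = R\Gamma_c(U, -)\)), a long exact sequence
\[
\cdots \to \mathrm{H}^{m}_{c}(U; \mathcal{F}|_U) \to \mathrm{H}^{m}(X; \mathcal{F}) \to \mathrm{H}^{m}(Y; \mathcal{F}|_Y) \to \mathrm{H}^{m+1}_{c}(U; \mathcal{F}|_U) \to \cdots
\]
Thus it suffices to show \(\mathrm{H}^{m}_{c}(U; \mathcal{F}|_U) = 0\) for all \(m \leq -1\): injectivity at \(m = -1\) needs the vanishing of \(\mathrm{H}^{-1}_{c}\), and bijectivity for \(m \leq -2\) needs the vanishing of both \(\mathrm{H}^{m}_{c}\) and \(\mathrm{H}^{m+1}_{c}\).

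Next I would invoke Artin's vanishing. Because \(Y\) is an ample hypersurface in the projective variety \(X\), the complement \(U\) is affine. Since \(j\) is an open immersion (in particular étale of relative dimension \(0\)), the pullback \(j^{\ast}\) is \(t\)-exact for the perverse \(t\)-structure, so \(j^{\ast}\mathcal{F}\) lies in \({}^{\mathrm{p}}D^{\geq 0}_{c}(U, \Lambda)\). Applying Theorem~\ref{theorem:relative-affine-lefschetz} to the affine structural morphism \(\pi \colon U \to \operatorname{Spec} k\), we conclude that \(R\pi_{!}(j^{\ast}\mathcal{F}) = R\Gamma_c(U; \mathcal{F}|_U)\) is perverse coconnective on \(\operatorname{Spec} k\). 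On a point the perverse \(t\)-structure agrees with the standard one, so this means exactly
\[
\mathrm{H}^{m}_{c}(U; \mathcal{F}|_U) = 0 \quad \text{for all } m < 0,
\]
which combined with the long exact sequence above finishes the proof.

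There is no real obstacle to overcome here; the statement is essentially a packaging of Artin's theorem via excision. The only minor point worth checking carefully is the perverse \(t\)-exactness of \(j^{\ast}\) for an open immersion, which is standard: for a smooth morphism of relative dimension \(d\) the functor \([d]\circ f^{\ast}\) is perverse \(t\)-exact, and \(d = 0\) for an open immersion.
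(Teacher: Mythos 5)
Your proposal is correct and is exactly the argument the paper gives: Artin's vanishing applied to the affine complement \(X \setminus Y\) yields \(\mathrm{H}^{m}_{c}(X\setminus Y;\mathcal{F}) = 0\) for \(m \leq -1\), and the excision long exact sequence then gives injectivity at \(m=-1\) and bijectivity for \(m \leq -2\). The paper states this in two sentences; you have merely filled in the routine details (t-exactness of \(j^{\ast}\), identification of the perverse and standard t-structures on a point), all of which are correct.
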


\begin{proof}
Since \(X\setminus Y\) is affine, Artin's vanishing theorem
(Theorem~\ref{theorem:relative-affine-lefschetz}) implies
\(\mathrm{H}^{m}_{c}(X\setminus Y; \mathcal{F}) = 0\) for
\(m \leq -1\).  The lemma then follows from the standard long exact
sequence associated to the excision triangle.
\end{proof}

The second lemma is less straightforward.  Ultimately, it relies on
Deligne's generic base change theorem, which is used to apply
Artin's vanishing theorem.  As a result, it holds only for general
hyperplane sections.

\begin{lemma}[Perverse weak Lefschetz theorem {\cite[Lemma~3.8]{wan-zhang_colevel}}]
\label{lemma:gysin}
Let \(X\) be an algebraic variety over an algebraically closed field
\(k\), and let \(\iota \colon X \to \mathbf{P}^{N}\) be a quasi-finite
morphism.  Suppose
\(\mathcal{F} \in {}^{\mathrm{p}}D^{\leq0}_{c}(X,\Lambda)\) is
perverse connective.  Then, for a general hyperplane
\(B \subset \mathbf{P}^{N}\), the Gysin map
\[
\mathrm{H}^{i-2}_{\mathrm{c}}
\left(X \times_{\mathbf{P}^{N}} B; \mathcal{F}(-1)\right)
\longrightarrow
\mathrm{H}^{i}_{\mathrm{c}}(X; \mathcal{F})
\]
is surjective for \(i = 1\) and is an isomorphism for all \(i \geq 2\).
\end{lemma}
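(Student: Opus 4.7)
The plan is to exploit the Gysin excision triangle on $X$ and reduce the required vanishing to a computation on the affine open $\mathbf{P}^N\setminus B$. Let $i\colon Y \hookrightarrow X$ be the closed embedding of $Y = X \times_{\mathbf{P}^N} B$ and $j\colon U = X \setminus Y \hookrightarrow X$ the open complement. Applying $R\Gamma_{\mathrm{c}}$ to the triangle
\[
i_* i^! \mathcal{F} \longrightarrow \mathcal{F} \longrightarrow Rj_* j^* \mathcal{F} \xrightarrow{+1}
\]
and using the purity isomorphism $i^! \mathcal{F} \cong i^* \mathcal{F}(-1)[-2]$ valid for a general $B$ (as invoked in the proof of Lemma~\ref{lemma:general-hyperplane-preserve-perv}), the connecting map of the resulting long exact sequence is exactly the Gysin map of the statement. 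Hence the lemma will follow once we show $\mathrm{H}^k_{\mathrm{c}}(X; Rj_* j^* \mathcal{F}) = 0$ for all $k \geq 1$.

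To establish this, I push everything down to $\mathbf{P}^N$ along $\iota$. Writing $\mathbf{A}^N = \mathbf{P}^N \setminus B$ and $\iota_0 = \iota|_U\colon U \to \mathbf{A}^N$, we have the Cartesian square
\[
\begin{tikzcd}
U \ar[r, hook, "j"] \ar[d, "\iota_0"'] & X \ar[d, "\iota"] \\
\mathbf{A}^N \ar[r, hook, "j'"'] & \mathbf{P}^N.
\end{tikzcd}
\]
Applying $R\iota_!$ to the Gysin triangle on $X$ and comparing with the Gysin triangle on $\mathbf{P}^N$ attached to the smooth codimension-one embedding $i'\colon B \hookrightarrow \mathbf{P}^N$, via proper base change $i'^* R\iota_! \cong R\iota_{B!} i^*$ together with the purity isomorphisms on both $X$ and $\mathbf{P}^N$, one obtains an identification $R\iota_! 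Rj_* j^* \mathcal{F} \cong Rj'_* R\iota_{0!} j^* \mathcal{F}$ on $\mathbf{P}^N$. Since $\mathbf{P}^N$ is proper and $R\Gamma(\mathbf{P}^N; Rj'_*(-)) = R\Gamma(\mathbf{A}^N; -)$, this reduces the task to showing $R\Gamma(\mathbf{A}^N; R\iota_{0!} j^* \mathcal{F}) \in D^{\leq 0}$.

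It remains to check that $\mathcal{G} := R\iota_{0!} j^* \mathcal{F}$ is perverse connective on $\mathbf{A}^N$, after which Artin vanishing closes the argument. Since $\mathcal{F}$ is perverse connective and $j$ is an open immersion, $j^*\mathcal{F}$ is perverse connective on $U$. By Zariski's Main Theorem, the quasi-finite morphism $\iota_0$ factors as $U \xrightarrow{\tilde\jmath} \bar U \xrightarrow{\bar\pi} \mathbf{A}^N$ with $\tilde\jmath$ an open immersion and $\bar\pi$ finite, so $R\iota_{0!} = \bar\pi_* \tilde\jmath_!$. Right $t$-exactness of $\tilde\jmath_!$ for the perverse $t$-structure and $t$-exactness of $\bar\pi_*$ (finite morphisms are perverse $t$-exact) then show $\mathcal{G} \in {}^{\mathrm{p}}D^{\leq 0}_c(\mathbf{A}^N,\Lambda)$. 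Its Verdier dual $\mathbb{D}\mathcal{G}$ is therefore perverse coconnective, and Artin's vanishing (Theorem~\ref{theorem:relative-affine-lefschetz}) on the affine variety $\mathbf{A}^N$ gives $R\Gamma_{\mathrm{c}}(\mathbf{A}^N; \mathbb{D}\mathcal{G}) \in D^{\geq 0}$. Verdier duality then yields $R\Gamma(\mathbf{A}^N; \mathcal{G}) \in D^{\leq 0}$, as required.

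The main obstacle is the base change and Gysin triangle comparison in the second paragraph: one must verify that after applying $R\iota_!$, the adjunction map $i_* i^! \mathcal{F} \to \mathcal{F}$ is identified, via the purity isomorphisms on $X$ and on $\mathbf{P}^N$, with the adjunction map $i'_* i'^!(R\iota_! \mathcal{F}) \to R\iota_!\mathcal{F}$, so that uniqueness of cones forces the isomorphism $R\iota_! Rj_* j^* \mathcal{F} \cong Rj'_* R\iota_{0!} j^* \mathcal{F}$ on which the rest of the argument rests.
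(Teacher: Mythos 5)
Your overall architecture is sound and genuinely different from the paper's: the paper dualizes, applies Deligne's coconnective weak Lefschetz theorem (restriction maps on \(\mathrm{H}^i(X;\mathbb{D}_X\mathcal{F})\) for \(i\leq -1\)) and the generic purity isomorphism, and then translates back by Verdier duality; you instead work directly with the triangle \(i_*i^!\mathcal{F}\to\mathcal{F}\to Rj_*j^*\mathcal{F}\) and try to kill \(\mathrm{H}^{k}_{\mathrm{c}}(X;Rj_*j^*\mathcal{F})\) for \(k\geq 1\) by pushing down to \(\mathbf{P}^N\) and invoking Artin vanishing on \(\mathbf{A}^N=\mathbf{P}^N\setminus B\). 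The last paragraph of your argument (perverse connectivity of \(R\iota_{0!}j^*\mathcal{F}\) via Zariski's main theorem, then Artin vanishing) is correct.

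However, the step you yourself flag as "the main obstacle" is a genuine gap, not a routine verification. The claimed isomorphism \(R\iota_!Rj_*j^*\mathcal{F}\cong Rj'_*R\iota_{0!}j^*\mathcal{F}\) is \emph{not} a formal consequence of proper base change: in a Cartesian square, \(!\)-pushforward does not commute with \(*\)-pushforward in general. Concretely, take \(X=\mathbf{P}^1\setminus\{\infty\}\hookrightarrow\mathbf{P}^1\) and \(B=\{\infty\}\): then \(U=X\), \(j=\mathrm{id}\), and the two sides become \(\iota_!\mathcal{F}\) and \(R\iota_*\mathcal{F}\), which differ. So the isomorphism can only hold because \(B\) is \emph{general}, and establishing it for general \(B\) is precisely a generic base change assertion (equivalently, one needs \(i'^{!}R\iota_!\mathcal{F}\simeq R\iota_{B!}i^{!}\mathcal{F}\) for general \(B\), which does not follow from the always-valid base changes \(g^*Rf_!\simeq Rf'_!g'^*\) or \(g^!Rf_*\simeq Rf'_*g'^!\) when \(\iota\) is not proper). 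This is exactly the nontrivial input — Deligne's \emph{Th.\ finitude} generic base change — that the paper's proof invokes both for the purity isomorphism \eqref{eq:gen-purity} and, implicitly, in Deligne's weak Lefschetz theorem for non-proper \(X\). Your proof would be complete if you supplied this generic base change argument (e.g., by spreading \(B\) over the dual projective space and applying \cite[Corollaire~2.9]{deligne_finitude} to the relevant sheaves on the universal family); as written, the central identification is asserted rather than proved, and it is false without the genericity of \(B\).
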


\begin{proof}[Sketch of proof]
Set \(\mathcal{G} = \mathbf{D}_{X}\mathcal{F}\).  Then
\(\mathcal{G} \in {}^{\mathrm{p}}D^{\geq0}_c(X,\Lambda)\).
By Deligne's perverse weak Lefschetz theorem
\cite[Corollary~A.5]{katz_affine-cohomological-transforms}, for a
general hyperplane \(B\), the restriction map
\begin{equation}
\label{eq:deligne-lef}
\mathrm{H}^i(X;\mathcal{G}) \to
\mathrm{H}^i(X\times_{\mathbf{P}^N}B;\mathcal{G}|_{B})
\end{equation}
is injective if \(i = -1\), and bijective if \(i \leq -2\).  Although
the statement in loc.~cit.\ is for perverse sheaves, the proof only
requires that the complex is coconnective, which is sufficient in
order to apply Artin's vanishing theorem.

Using Deligne's generic base change
(\cite[Corollarie~2.9]{deligne_finitude}), one can show that for
general \(B\), the morphism
\(\iota\colon X \times_{\mathbf{P}^N} B \to X\) also satisfies
\begin{equation}
\label{eq:gen-purity}
R\iota^{!}\mathcal{F} \simeq \mathcal{F}[-2](-1)
\end{equation}
(see, for example, \cite[Lemma~3.2]{wan-zhang_colevel} for details).

The lemma then follows by applying Verdier duality to
\eqref{eq:deligne-lef}, and then using \eqref{eq:gen-purity}.
\end{proof}

\subsection{Characteristic classes}

In addition to perverse sheaves, another technical tool required
for this paper is the theory of characteristic classes of constructible
complexes, developed by Takeshi Saito in the étale
setting~\cite{saito_takeshi_characteristic-cycle-and-singular-support}.
The purpose of this subsection is to recall some simple properties we will
need in what follows.

In \cite{saito_takeshi_characteristic-cycle-and-singular-support},
characteristic classes are defined for finite \(\Lambda\).  But
Umezaki, Yang and Zhao \cite[\S5.3]{umezaki-yang-zhao} observed that the
whole theory can be readily adapted to the case of
\(\overline{\mathbb{Q}}_{\ell}\)-coefficients.

\begin{situation}[Notation]
Let \(X\) be a variety over \(k\).  Denote by
\[
\mathrm{CH}_{\ast}(X) = \bigoplus_{i=0}^{\dim X} \mathrm{CH}_i(X)
\]
the Chow group of \(X\) \cite[\S1.3]{fulton_intersection-theory}.
In particular, there is a
\emph{fundamental cycle class} \([X] \in \mathrm{CH}_{\ast}(X)\)
\cite[\S1.5]{fulton_intersection-theory}.  For
\(\xi \in \mathrm{CH}_{\ast}(X)\), let \(\xi_j \in \mathrm{CH}_j(X)\)
denote the degree \(j\) component of \(\xi\).
We write \(\int_X \xi = \deg \xi_0\), where \(\deg\colon \mathrm{CH}_0(X) \to \mathbb{Z}\)
is the degree of a zero-cycle class \cite[Definition~1.4]{fulton_intersection-theory}.
If \(E\) is a vector bundle on \(X\), and \(P\) is a polynomial in the Chern
classes of \(E\), then for any \(\xi \in \mathrm{CH}_{\ast}(X)\), the
\emph{cap product} \(P \capprod \xi\) is defined as in
\cite[\S3.2]{fulton_intersection-theory}.
\end{situation}

\begin{situation}\label{situation:construction-cc}
Let \(X\) be a smooth variety over \(k\).  In the étale setting,
Takeshi
Saito~\cite[\S6.2]{saito_takeshi_characteristic-cycle-and-singular-support}
constructed a group homomorphism
\[
\mathrm{cc}_X \colon K_0(D^b_c(X,\Lambda)) \to \mathrm{CH}_{\ast}(X),
\]
which assigns to each constructible complex \(\mathcal{F}\) its
\emph{characteristic class} \(\mathrm{cc}_X(\mathcal{F})\) in the Chow
group.

We briefly recall the construction.  Assume \(X\) is a smooth variety
of pure dimension \(n\).  The cotangent bundle
\(T^{\ast}X = \operatorname{\mathit{Spec}}_X\mathrm{Sym}^{\bullet}(\Omega_{X/k}^{1})^{\vee}\)
admits a natural compactification by adding a ``hyperplane at
infinity'':
\[
T^{\ast}X \hookrightarrow \mathbb{P}(T^{\ast}X \times_{X} \mathbb{A}^{1}_{X}) = \operatorname{\mathit{Proj}}_{X} \mathrm{Sym}^{\bullet} (\Omega_{X}^{1} \oplus \mathscr{O}_{X})^{\vee}.
\]
Here,
\(
\mathbb{P}(T^{\ast}X) \simeq \mathbb{P}(T^{\ast}X \times_{X} \mathbb{A}^{1}_{X}) \setminus T^{\ast} X
\)
plays the role of this hyperplane at infinity.
Given a constructible complex \(\mathcal{F}\) on \(X\), denote its
\emph{characteristic cycle}
(which is an element in \(\mathrm{CH}_{n}(T^{\ast}X)\), see \cite[Definition~5.10]{saito_takeshi_characteristic-cycle-and-singular-support})
by \(\mathrm{CC}(\mathcal{F})\).  The projectivization
\[
\overline{\mathrm{CC}}(\mathcal{F}) = \mathbb{P}(\mathrm{CC}(\mathcal{F}) \times_{X} \mathbb{A}^{1}_{X}).
\]
is then an \(n\)-cycle
\(\overline{\mathrm{CC}}(\mathcal{F})\in \mathrm{CH}_{n}(\mathbb{P}(T^{\ast}X \times_{X} \mathbb{A}^{1}_{X}))\).
The \emph{characteristic class} \(\mathrm{cc}_X(\mathcal{F})\)
is then defined via the following formula
\[
\mathrm{cc}_X(\mathcal{F}) = c(\Omega^1_X) \capprod
\varpi_{\ast} \left( \sum_{i \geq 0} c_{1}(\mathscr{O}(1))^{i} \capprod \overline{\mathrm{CC}}(\mathcal{F}) \right) \in \mathrm{CH}_{\ast}(X),
\]
where \(c(\Omega^1_X)\) is the total Chern class of the cotangent
bundle,
\(\varpi\colon \mathbb{P}(T^{\ast}X \times_{X} \mathbb{A}^{1}_{X}) \to X\)
denotes the projection, and \(\mathscr{O}(1)\) is the dual of the
relative tautological bundle.

As a basic example, consider the constant sheaf \(\Lambda_X[n]\).  Its
characteristic cycle \(\mathrm{CC}(\Lambda_X[n])\) is the zero section
\(T^{\ast}_X X\) inside the cotangent bundle. In this case, we have
\[
\mathrm{cc}_X(\Lambda_X[n]) = c_{\ast}(\Omega^1_{X/k}) \capprod [X].
\]
\end{situation}

These characteristic classes allow the computation of Euler
characteristics of constructible complexes via the \emph{index formula}.
This result is due to
Brylinski--Dubson--Kashiwara~\cite{brylinski-dubson-kashiwara} in
characteristic zero (see also
\cite[(9.1.14)]{kashiwara-schapira_sheaves-on-manifolds}) and to
Takeshi Saito~\cite[Lemma~6.9(1),
Theorem~7.13]{saito_takeshi_characteristic-cycle-and-singular-support}
in the étale setting.

\begin{theorem}[Index Formula]\label{theorem:index}
Let \(X\) be a smooth and proper variety over \(k\).  Then for any
constructible complex \(\mathcal{F} \in D^b_c(X,\Lambda)\), we have
\[
\chi(X,\mathcal{F}) = \int_X \mathrm{cc}_{X}(\mathcal{F})
\]
where
\(\chi(X,\mathcal{F}) = \sum (-1)^i \dim \mathrm{H}^i(X;\mathcal{F})\)
is the Euler characteristic of \(\mathcal{F}\).
\end{theorem}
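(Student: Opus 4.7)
My plan is to reduce the formula to the intersection-theoretic form of the index theorem on the cotangent bundle, and then to translate the resulting intersection number into the Chow-theoretic expression defining \(\mathrm{cc}_{X}(\mathcal{F})\) via a formal projective-bundle calculation.

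The first step is to invoke Saito's intersection-theoretic index theorem \cite[Theorem~7.13]{saito_takeshi_characteristic-cycle-and-singular-support}, which asserts
\[
\chi(X,\mathcal{F}) = \bigl(\mathrm{CC}(\mathcal{F}),\ [T^{\ast}_{X}X]\bigr)_{T^{\ast}X},
\]
i.e.\ that the Euler characteristic equals the intersection product of the characteristic cycle with the zero section inside the smooth \(2n\)-dimensional variety \(T^{\ast}X\).

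The second step is to compute this intersection number after compactifying \(T^{\ast}X\) to \(P = \mathbb{P}(T^{\ast}X \oplus \mathbb{A}^{1}_{X})\).  Since the zero section is disjoint from the hyperplane at infinity, the intersection is unchanged upon replacing \(\mathrm{CC}(\mathcal{F})\) by its projective closure \(\overline{\mathrm{CC}}(\mathcal{F})\).  The zero section \(s\colon X \hookrightarrow P\) is the vanishing locus of a regular section of the tautological rank-\(n\) quotient bundle \(Q\) fitting into \(0 \to \mathscr{O}_{P}(-1) \to \pi^{\ast}(T^{\ast}X \oplus \mathscr{O}_X) \to Q \to 0\), so \([s(X)] = c_{n}(Q) \capprod [P]\); the short exact sequence then yields \(c_{n}(Q) = \sum_{i \geq 0} \pi^{\ast}c_{n-i}(T^{\ast}X)\cdot \xi^{i}\).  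Plugging this into the projection formula, and observing that only the zero-dimensional component survives \(\int_X\), one recovers exactly the formula of~\ref{situation:construction-cc} defining \(\mathrm{cc}_{X}(\mathcal{F})\).

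The hard part is unquestionably the first step: Saito's theorem is deep, resting on the theory of singular support and the characteristic cycle, the Milnor formula in the étale setting, and compatibility with proper pushforward.  The second step, by contrast, is a formal Chow-theoretic calculation on a smooth variety, identical in the topological and étale contexts; alternatively, one may adopt Saito's intersection-theoretic formula as the \emph{definition} of \(\mathrm{cc}_{X}(\mathcal{F})\), in which case only the verification that the two formulas agree — which is precisely what step two supplies — remains.
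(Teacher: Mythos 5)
Your proposal is correct and follows the same route as the paper, which simply attributes the statement to Saito, citing exactly the two ingredients you use: Theorem~7.13 of \cite{saito_takeshi_characteristic-cycle-and-singular-support} for the intersection-theoretic index formula on \(T^{\ast}X\), and Lemma~6.9(1) of loc.~cit., whose content is precisely your second step identifying \(\bigl(\mathrm{CC}(\mathcal{F}),[T^{\ast}_{X}X]\bigr)_{T^{\ast}X}\) with \(\int_X \mathrm{cc}_X(\mathcal{F})\) via the projective compactification. Your Chow-theoretic computation with the quotient bundle \(Q\) is a correct unwinding of that lemma, so nothing further is needed.
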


\begin{remark}
Over the field of complex numbers,
Schwartz~\cite{schwartz_characteristic-classes-1,schwartz_characteristic-classes-2}
and MacPherson~\cite{macpherson_chern-classes-for-singular-varieties}
introduced the Chern--Schwartz--MacPherson class for constructible
complexes, answering a conjecture of
Grothendieck~\cite[Note~87\textsubscript{1}]{grothendieck_recoltes-et-semailles}.
This gives a group homomorphism
\[
c_{X,\ast}(-) \colon K_0(D^b_c(X,\Lambda)) \to \mathrm{H}^{-\ast}(X,Ra_X^{!}\Lambda),
\]
where \(a_X \colon X \to \operatorname{Spec}\mathbb{C}\) is the
structural morphism.  Following Sabbah's conormal
construction~\cite{sabbah_remarks-conormal-spaces},
Kennedy~\cite{kennedy_macpherson-classes} gives an algebraic
construction in characteristic \(0\), and showed that the
Chern--Schwartz--MacPherson class factors through the Chow group.  In
the topological setting this was also done by
Ginzburg~\cite[Appendix]{ginsburg_characteristic-varieties-and-vanishing-cycles}.

Takeshi Saito's definition of characteristic classes in the étale
setting~\cite{saito_takeshi_characteristic-cycle-and-singular-support}
(cf.~\ref{situation:construction-cc}) follows the approach of
Ginzburg mentioned above.  In characteristic zero, the characteristic class
\(\mathrm{cc}_{X}(\mathcal{F})\) coincides with the ``shadow''
\(\check{c}_{X}(\mathcal{F})\) (a term coined by
Aluffi~\cite{aluffi_shadow}) of the characteristic cycle of
\(\mathcal{F}\), which differs from the Chern--Schwartz--MacPherson
class by a sign:
\[
\mathrm{cc}_{X,i}(\mathcal{F}) = \check{c}_{X,i}(\mathcal{F}) = (-1)^{i} c_{X,i}(\mathcal{F}).
\]
\end{remark}

\begin{remark}
In characteristic zero, the Chern--Schwartz--MacPherson class
gives rise to a natural transformation from \(K_0(D^b_c(X,\Lambda))\)
to \(\mathrm{CH}_{\ast}(X)\) for proper morphisms.  In
positive characteristic, however, the characteristic class
\(\mathrm{cc}_{X}(\mathcal{F})\) does not extend to a natural
transformation for all proper morphisms;
see~\cite[Example~6.10]{saito_takeshi_characteristic-cycle-and-singular-support}.

For historical remarks about the index formula,
see \cite[Chapter 9, Note]{kashiwara-schapira_sheaves-on-manifolds}.
\end{remark}

We shall need the following consequence of the index formula,
(in characteristic zero, see also \cite[Proposition~2.6]{aluffi_euler-characteristi-generali-linear-sections}).

\begin{lemma}\label{lemma:cc-hypersurface}
Let \(\mathbb{P}\) be a smooth projective variety with a very ample
invertible sheaf \(\mathscr{L}\).  Let
\(\mathcal{F} \in D^b_c(\mathbb{P},\Lambda)\).  Let
\(\iota\colon Y \to \mathbb{P}\) be the inclusion morphism of the
hypersurface \(Y\) defined by a sufficiently general section of
\(\mathscr{L}\).  Then we have
\[
\mathrm{cc}_{\mathbb{P}}(\iota_{\ast}\iota^{\ast}\mathcal{F}) = -\frac{c_1(\mathscr{L})}{1 - c_1(\mathscr{L})} \capprod \mathrm{cc}_{\mathbb{P}}(\mathcal{F}).
\]
In particular, by Theorem~\ref{theorem:index}, we get
\[
\chi(Y, \iota^{\ast}\mathcal{F}) = \int_{\mathbb{P}} -\frac{c_1(\mathscr{L})}{1 - c_1(\mathscr{L})} \capprod \mathrm{cc}_{\mathbb{P}}(\mathcal{F}).
\]
\end{lemma}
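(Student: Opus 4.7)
The plan is to reduce the identity to the case $\mathcal{F}=\iota_{Z,*}\mathcal{L}$ for $Z\subset\mathbb{P}$ a closed smooth subvariety and $\mathcal{L}$ a lisse sheaf on $Z$, where both sides become explicit Chern-class expressions.

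Both sides of the asserted identity are $\mathbb{Z}$-linear functions of $[\mathcal{F}]\in K_0(D^b_c(\mathbb{P},\Lambda))$, by additivity of $\mathrm{cc}$ on distinguished triangles. By a standard dévissage---smooth stratification of $\mathcal{F}$, iterated excision triangles, and resolution of singularities to smooth out singular closures of strata---$K_0(D^b_c(\mathbb{P},\Lambda))$ is generated by classes $[\iota_{Z,*}\mathcal{L}]$ with $Z\subset\mathbb{P}$ closed smooth irreducible and $\mathcal{L}$ lisse on $Z$. It therefore suffices to verify the formula on such classes.

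Fix such $\mathcal{F}=\iota_{Z,*}\mathcal{L}$ and set $h\coloneq c_1(\mathscr{L})$. Since the singular support of $\iota_{Z,*}\mathcal{L}$ is the conormal bundle $T^{*}_{Z}\mathbb{P}$ with multiplicity $\mathrm{rk}(\mathcal{L})$, proper-pushforward compatibility of Saito's characteristic class along closed immersions of smooth varieties yields
\[
\mathrm{cc}_{\mathbb{P}}(\iota_{Z,*}\mathcal{L}) = \iota_{Z,*}\mathrm{cc}_Z(\mathcal{L}) = (-1)^{\dim Z}\,\mathrm{rk}(\mathcal{L})\cdot\iota_{Z,*}\bigl(c(\Omega^1_Z)\capprod[Z]\bigr).
\]
For sufficiently general $Y$, Bertini's theorem ensures $Z\cap Y$ is smooth of pure codimension one in $Z$ (in particular $Z\not\subset Y$), with normal bundle $\mathscr{L}|_{Z\cap Y}$, and $\iota_{*}\iota^{*}(\iota_{Z,*}\mathcal{L})\simeq\iota_{Z\cap Y,*}(\mathcal{L}|_{Z\cap Y})$, to which the same formula applies with $Z$ replaced by $Z\cap Y$. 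Combining the adjunction
\[
c(\Omega^1_{Z\cap Y})=\frac{c(\Omega^1_Z)|_{Z\cap Y}}{1-h|_{Z\cap Y}},
\]
the projection formula, and the identity $[Z\cap Y]=h\capprod[Z]$ in $\mathrm{CH}_*(Z)$ gives
\[
\mathrm{cc}_{\mathbb{P}}(\iota_{*}\iota^{*}\iota_{Z,*}\mathcal{L})=-(-1)^{\dim Z}\,\mathrm{rk}(\mathcal{L})\cdot\iota_{Z,*}\!\left(\tfrac{h\,c(\Omega^1_Z)}{1-h}\capprod[Z]\right)=-\tfrac{h}{1-h}\capprod\mathrm{cc}_{\mathbb{P}}(\iota_{Z,*}\mathcal{L}),
\]
the overall sign flip arising from $\dim(Z\cap Y)=\dim Z-1$.

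The main obstacle is the dévissage step: reducing pushforwards from locally closed or singular strata to those from smooth closed subvarieties requires iterated excision combined with resolution of singularities. The signs dictated by Saito's convention $\mathrm{cc}_{X,i}=(-1)^i c_{\mathrm{SM},X,i}$ are matched automatically throughout the Chern-class calculation.
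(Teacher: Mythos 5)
Your verification on the classes $[\iota_{Z,*}\mathcal{L}]$ (with $Z$ smooth closed and $\mathcal{L}$ lisse on $Z$) is correct, and the reduction to generators via additivity of $\mathrm{cc}$ on $K_0$ is sound in principle. The gap is the generation claim itself. The standard dévissage (stratify so that $\mathcal{F}$ is lisse on each stratum, then excise) produces generators of the form $[j_!\mathcal{L}]$ with $j\colon U\hookrightarrow\mathbb{P}$ a \emph{locally closed} immersion of a smooth subvariety and $\mathcal{L}$ lisse on $U$. Passing from these to your generators is exactly the hard step, and it fails for two reasons. First, the paper works over an algebraically closed field of arbitrary characteristic, where resolution of singularities of the closure $\overline{U}$ is not available; replacing it by de Jong's alterations introduces a nontrivial degree, which is fatal for torsion coefficients $\Lambda=\mathbb{F}_{\ell}$. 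Second, and more fundamentally, even granting a smooth compactification $\tilde{Z}\supset U$ with normal crossings boundary, the lisse sheaf $\mathcal{L}$ on $U$ does not extend to a lisse sheaf on $\tilde{Z}$ (it has monodromy, and in positive characteristic possibly wild ramification, along the boundary). So the class $[j_!\mathcal{L}]$ is \emph{not} in the span of your generators in any evident way, and for such a class $\mathrm{CC}(j_!\mathcal{L})$ is not $(-1)^{\dim}\mathrm{rk}(\mathcal{L})\,[T^{*}_{\overline{U}}\mathbb{P}]$: it acquires contributions supported over the boundary, governed in positive characteristic by the ramification of $\mathcal{L}$. Your explicit Chern-class verification therefore does not apply to the generators one actually obtains.

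The paper sidesteps dévissage entirely: for a general $Y$, Saito's Lemma~1.3.7 gives proper $\mathrm{SS}(\mathcal{F})$-transversality of $Y$, whence his Proposition~7.8 and Lemma~6.12 yield $\mathrm{cc}_{Y}(\iota^{*}\mathcal{F})=-c(T^{*}_{Y}\mathbb{P})^{-1}\capprod\iota^{!}\mathrm{cc}_{\mathbb{P}}(\mathcal{F})$ for an \emph{arbitrary} constructible complex $\mathcal{F}$, and Lemma~6.11 handles the pushforward along the closed immersion $\iota$; the rest is the adjunction and projection formulas you also use. If you want to keep a dévissage-style argument, you would need to quote a pullback compatibility of $\mathrm{CC}$ under transversal restriction valid for all constructible complexes --- which is precisely the input the direct proof uses, making the dévissage redundant.
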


\begin{proof}
Because \(Y\) is a general hypersurface, by
\cite[Lemma~1.3.7]{saito_takeshi_characteristic-cycles-and-the-conductors-of-direct-image},
\(Y\) is properly
\(\mathrm{SS}(\mathcal{F})\)-transverse
in the sense of
\cite[Definition~7.1(1)]{saito_takeshi_characteristic-cycle-and-singular-support}.
Therefore, by \cite[Proposition~7.8,
Lemma~6.12]{saito_takeshi_characteristic-cycle-and-singular-support},
we have
\[
\mathrm{cc}_{Y}(\iota^{\ast}\mathcal{F}) = - c(T_Y^{\ast}\mathbb{P})^{-1} \capprod \iota^{!} \mathrm{cc}_{\mathbb{P}}(\mathcal{F}).
\]
Here, \(\iota^{!}\) is the Gysin pullback between the Chow groups
(cf.~\cite[\S2.6]{fulton_intersection-theory}).  Since \(\iota\) is a
closed immersion, by
\cite[Lemma~6.11]{saito_takeshi_characteristic-cycle-and-singular-support},
we have
\(\iota_{\ast}\mathrm{cc}_Y(\iota^{\ast}\mathcal{F}) = \mathrm{cc}_{\mathbb{P}}(\iota_{\ast}\iota^{\ast}\mathcal{F})\),
hence
\begin{align*}
  \mathrm{cc}_{\mathbb{P}}(\iota_{\ast}\iota^{\ast}\mathcal{F})
  &= \iota_{\ast}\left( - c(T_Y^{\ast}\mathbb{P})^{-1} \capprod \iota^{!} \mathrm{cc}_{\mathbb{P}}(\mathcal{F}) \right) \\
  &= \iota_{\ast}\left( {\textstyle - \frac{1}{1-c_1(\mathscr{L}|_Y)} \capprod \iota^{!} \mathrm{cc}_{\mathbb{P}}(\mathcal{F})} \right)
  & \text{by the adjunction formula}\\
  &= \iota_{\ast} \iota^{!}\left(\textstyle - \frac{1}{1-c_1(\mathscr{L})} \capprod \mathrm{cc}_{\mathbb{P}}(\mathcal{F})  \right)
  & \text{by \cite[Proposition 2.6(e)]{fulton_intersection-theory}} \\
  &= \textstyle -\frac{c_1(\mathscr{L})}{1-c_1(\mathscr{L})} \capprod \mathrm{cc}_{\mathbb{P}}(\mathcal{F})
  & \text{by \cite[Proposition 2.6(b)]{fulton_intersection-theory}}.
\end{align*}
This completes the proof.
\end{proof}

\begin{lemma}
\label{lemma:higher-intersection-cc}
Notation as in Lemma~\ref{lemma:cc-hypersurface}.
Suppose \(\iota\colon A \to \mathbb{P}\) is the intersection of \(r\) hypersurfaces
defined by \(r\) sufficiently general sections of \(\mathscr{L}\).
Then we have
\[
\mathrm{cc}_{\mathbb{P},j}(\iota_{\ast}\iota^{\ast}\mathcal{F})
=
(-1)^r \sum_{m\geq r} \binom{m-1}{r-1} \cdot c_1(\mathscr{L})^{m} \capprod \mathrm{cc}_{\mathbb{P},m+j}(\mathcal{F}).
\]
\end{lemma}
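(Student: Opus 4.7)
The plan is to prove the formula by induction on $r$, with Lemma~\ref{lemma:cc-hypersurface} serving as the base case $r=1$: expanding $-c_1(\mathscr{L})/(1-c_1(\mathscr{L})) = -\sum_{m \geq 1} c_1(\mathscr{L})^m$, the claimed identity with $\binom{m-1}{0} = 1$ is exactly what Lemma~\ref{lemma:cc-hypersurface} gives.

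For the inductive step, suppose the formula holds for $r-1$. Let $Y_1,\ldots,Y_r$ be the $r$ general sections of $\mathscr{L}$, and set $A' = Y_1 \cap \cdots \cap Y_{r-1}$ with inclusion $\iota'\colon A' \hookrightarrow \mathbb{P}$, and $\iota_r\colon Y_r \hookrightarrow \mathbb{P}$. Let $\mathcal{G} = \iota'_{\ast}\iota'^{\ast}\mathcal{F}$. Since $\iota'$ is a closed immersion, proper base change identifies $\iota_{r,\ast}\iota_r^{\ast}\mathcal{G}$ with $\iota_{\ast}\iota^{\ast}\mathcal{F}$. We choose the hypersurfaces sequentially, so that each $Y_i$ is sufficiently general with respect to the characteristic class of the sheaf obtained at the previous step (the relevant genericity conditions are open); this is the one subtlety to check carefully, and is the only real obstacle to the argument. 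Granted this, Lemma~\ref{lemma:cc-hypersurface} applied to $\mathcal{G}$ and the hypersurface $Y_r$ yields
\[
\mathrm{cc}_{\mathbb{P}}(\iota_{\ast}\iota^{\ast}\mathcal{F}) = -\frac{c_1(\mathscr{L})}{1-c_1(\mathscr{L})} \capprod \mathrm{cc}_{\mathbb{P}}(\mathcal{G}).
\]

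Taking the degree $j$ component and substituting the inductive formula for $\mathrm{cc}_{\mathbb{P},s+j}(\mathcal{G})$ gives a double sum over $s \geq 1$ and $t \geq r-1$ whose total power of $c_1(\mathscr{L})$ is $m := s+t$. Collecting terms, the coefficient of $c_1(\mathscr{L})^{m} \capprod \mathrm{cc}_{\mathbb{P},m+j}(\mathcal{F})$ becomes
\[
(-1)^{r} \sum_{t=r-1}^{m-1} \binom{t-1}{r-2}.
\]
The hockey-stick identity $\sum_{k=r-2}^{m-2} \binom{k}{r-2} = \binom{m-1}{r-1}$ immediately reduces this coefficient to $(-1)^r \binom{m-1}{r-1}$, yielding the desired formula.

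The routine calculations are the binomial substitution and the hockey-stick identity; the substantive point is the genericity propagation, which uses only that the transversality hypothesis needed for Lemma~\ref{lemma:cc-hypersurface} is an open condition on the space of sections of $\mathscr{L}$, so a general $r$-tuple satisfies it at each intermediate stage.
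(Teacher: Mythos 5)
Your proof is correct and follows essentially the same route as the paper: the paper likewise iterates Lemma~\ref{lemma:cc-hypersurface} $r$ times, obtaining $\mathrm{cc}_{\mathbb{P}}(\iota_{\ast}\iota^{\ast}\mathcal{F}) = (-1)^r \bigl(c_1(\mathscr{L})/(1-c_1(\mathscr{L}))\bigr)^r \capprod \mathrm{cc}_{\mathbb{P}}(\mathcal{F})$, and then reads off the coefficient of $h^{m-r}$ in $(1-h)^{-r}$ as $\binom{m-1}{r-1}$, whereas you organize the identical computation as an explicit induction closed by the hockey-stick identity. Your care about propagating the genericity condition through the successive hypersurfaces is a point the paper leaves implicit but does not change the argument.
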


\begin{proof}
Using Lemma~\ref{lemma:cc-hypersurface} inductively,
we see
\[
\mathrm{cc}_{\mathbb{P}}(\iota_{\ast}\iota^{\ast}\mathcal{F})
= (-1)^r \left(\frac{c_1(\mathscr{L})}{1 - c_1(\mathscr{L})}\right)^r \capprod  \mathrm{cc}_{\mathbb{P}}(\mathcal{F}).
\]
Therefore, \(\mathrm{cc}_{\mathbb{P},j}(\iota_{\ast}\iota^{\ast}\mathcal{F})\)
equals \((-1)^r\sum_{m\geq j} a_m \cdot c_1(\mathscr{L})^{m} \capprod \mathrm{cc}_{\mathbb{P},m+j}\), where \(a_m\) is the coefficient of \(h^{m-r}\) of the power series
\(\left(\frac{1}{1 - h}\right)^r\), which is
\[
(-1)^{m-r} \binom{-r}{m-r} = \binom{r + m -r - 1}{m-r} = \binom{m-1}{r-1}.
\]
This completes the proof.
\end{proof}

\begin{definition}\label{definition:vir-gen-rank}
Let \(S\) be a variety over \(k\) of dimension \(n\), and let
\(\mathcal{G} \in D^{b}_{c}(S,\Lambda)\) be a constructible complex of
\(\Lambda\)-sheaves.  We define the \emph{virtual generic rank} of
\(\mathcal{G}\) to be
\[
\mathrm{rank}^{\circ}(\mathcal{G}) = \sum_{j\in \mathbb{Z}} (-1)^{j} \operatorname{rank}_{\Lambda} \mathcal{H}^{j}(\mathcal{G}|_{S^{\circ}})
\]
where \(S^{\circ}\) is the largest purely \(n\)-dimensional Zariski
open dense subset of \(S\) over which \(\mathcal{G}\) is lisse.
\end{definition}

\begin{lemma}\label{lemma:range-of-cc}
Let \(X\) be a smooth variety over \(k\).  Let
\(\mathcal{F} \in D^{b}_{c}(X,\Lambda)\) be a constructible complex.
Let \(S\) be the support of \(\mathcal{F}\) endowed with the reduced
structure.  Let \(n = \dim S\).  Then we have
\begin{enumerate}
\item \(\mathrm{cc}_{X,j}(\mathcal{F}) = 0\) for \(j > n\),
\item
\(\mathrm{cc}_{X,n}(\mathcal{F}) = (-1)^{n} \cdot \mathrm{rank}^{\circ}(\mathcal{F}) \cdot [S]_n \in \mathrm{CH}_{n}(X)\),
where \([S]_n\) is the fundamental class of the
union of \(n\)-dimensional irreducible components of \(S\).
\end{enumerate}
\end{lemma}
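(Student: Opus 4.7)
The plan is to expand $\mathrm{CC}(\mathcal{F})$ into its irreducible components and compute $\mathrm{cc}_X$ term by term via the formula from \ref{situation:construction-cc}. Set $N=\dim X$. The characteristic cycle $\mathrm{CC}(\mathcal{F})$ is a cycle of pure dimension $N$ on $T^*X$ supported on $\mathrm{SS}(\mathcal{F})$, and so decomposes as $\mathrm{CC}(\mathcal{F})=\sum_\alpha m_\alpha[T^*_{Z_\alpha}X]$ where each $Z_\alpha$ is an irreducible closed subvariety of $X$. Since $\mathrm{SS}(\mathcal{F})$ projects into $S$ under $T^*X\to X$, every $Z_\alpha$ lies in $S$, so $d_\alpha\coloneq \dim Z_\alpha\leq n$. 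Because $\mathrm{cc}_X$ depends linearly on $\mathrm{CC}(\mathcal{F})$, it suffices to analyze the contribution of each conormal summand separately.

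Fix $Z=Z_\alpha$ of dimension $d$. Over its smooth locus, $T^*_Z X$ is a vector bundle of rank $N-d$, so $\mathbb{P}(T^*_Z X\times_Z \mathbb{A}^1_Z)$ is generically a $\mathbb{P}^{N-d}$-bundle over $Z$. The projective bundle / Segre-class formula (\cite[Chapter~3]{fulton_intersection-theory}) gives
\[
\varpi_*\bigl(c_1(\mathscr{O}(1))^{i}\capprod [\mathbb{P}(T^*_Z X\times_Z\mathbb{A}^1_Z)]\bigr)=s_{i-(N-d)}(T^*_Z X\oplus \mathscr{O}_Z)\capprod [Z],
\]
with Segre classes of negative index taken to vanish. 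Consequently this pushforward vanishes for $i<N-d$, equals $[Z]$ for $i=N-d$, and lives in Chow dimensions strictly less than $d$ for $i>N-d$. Capping further by $c(\Omega^1_X)=1+c_1+\cdots$ preserves or lowers dimension, so the contribution of $[T^*_{Z_\alpha} X]$ to $\mathrm{cc}_X$ lies in dimensions $\leq d_\alpha$, with its dimension-$d_\alpha$ component equal to $[Z_\alpha]$.

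Part~(1) then follows at once: every $d_\alpha\leq n$, so no component of $\mathrm{cc}_X(\mathcal{F})$ appears in dimension $>n$. For part~(2), only summands with $d_\alpha=n$ contribute to $\mathrm{cc}_{X,n}(\mathcal{F})$, giving $\sum_{d_\alpha=n}m_\alpha[Z_\alpha]$. It remains to identify $m_\alpha$ when $\dim Z_\alpha=n$. I would do this by restricting to a Zariski open $V\subseteq X$ that meets $Z_\alpha$ in its smooth locus, avoids all other $n$-dimensional components of $S$, and on which $\mathcal{F}$ is lisse along $Z_\alpha$. Then $\mathcal{F}|_V=i_{V,*}\mathcal{G}$ for a complex $\mathcal{G}$ of lisse sheaves on $Z_\alpha\cap V$; by additivity of $\mathrm{CC}$ on distinguished triangles and its compatibility with the closed immersion $Z_\alpha\cap V\hookrightarrow V$, the multiplicity $m_\alpha$ equals the coefficient of the zero section in $\mathrm{CC}(\mathcal{G})$. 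The basic example in \ref{situation:construction-cc} computes this coefficient for a perverse sheaf $\mathcal{L}[n]$ as $\mathrm{rank}(\mathcal{L})$; combining the shift relation $\mathrm{CC}(\mathcal{F}[1])=-\mathrm{CC}(\mathcal{F})$ with the alternating sum over cohomology degrees produces the sign $(-1)^n$ and yields $m_\alpha=(-1)^n\mathrm{rank}^{\circ}(\mathcal{F})$. Together with $[S]_n=\sum_{d_\alpha=n}[Z_\alpha]$ this establishes part~(2). The main obstacle is this final sign and multiplicity bookkeeping, in particular the closed-immersion compatibility that transfers the computation of $\mathrm{CC}(\mathcal{G})$ on $Z_\alpha\cap V$ into the coefficient of $[T^*_{Z_\alpha\cap V}V]$ in $\mathrm{CC}(\mathcal{F}|_V)$.
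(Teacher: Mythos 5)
Your proof is correct and follows essentially the same route as the paper's (which is a terse two-line argument citing Saito's Lemma~6.9(2)): the characteristic cycle is a sum of conormal cycles supported over \(S\), so \(\mathrm{cc}_{X}(\mathcal{F})\) lives in dimensions \(\leq n\), and the dimension-\(n\) multiplicities are read off by restricting to the lisse locus of the \(n\)-dimensional components, where \(\mathrm{CC}\) is \((-1)^{n}\operatorname{rank}^{\circ}(\mathcal{F})\) times the conormal of the zero section. Your Segre-class computation of the pushforward and the localization step just make explicit what the paper leaves as a citation.
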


\begin{proof}[Proof (cf.~{\cite[Lemma 6.9(2)]{saito_takeshi_characteristic-cycle-and-singular-support}})]
The first part is clear since \(\mathrm{CH}_j(S) = 0\) for \(j > 0\).
Write \(S = S^{\prime} \cup \bigcup_{i=1}^m S_i\), where \(\dim S^{\prime} \leq n-1\)
and each \(S_i\) is an irreducible component of \(S\) of dimension \(n\).
Thus, \(\mathrm{CH}_n(S) = \bigoplus_{i=1}^m \mathrm{CH}_n(S_i)\).
Let \(S^{\circ}\) be the largest open subset of \(\bigcup_{i} S_i\)
over which \(\mathcal{F}\) is lisse.
Then, \(\mathrm{CC}(\mathcal{F}|_{S^{\circ}})\) is \((-1)^n \operatorname{rank}^{\circ}(\mathcal{F}|_{S^{\circ}})\)
times the zero section of the cotangent bundle of \(S^{\circ}\).
The assertion follows (see \cite[Definition~6.7]{saito_takeshi_characteristic-cycle-and-singular-support}).
\end{proof}

\renewcommand{\theremark}{\thesection.\arabic{remark}}

\section{The proper degeneration lemma}
\label{sec:proper-degeneration}

In this section, we prove the following Proper Degeneration
Lemma, which is a major technical engine of this paper.

\begin{lemma}[Proper Degeneration Lemma]%
\label{lemma:proper-degeneration}
Let \(\mathbb{P}\) be a proper variety over \(k\).  Let
\(\mathscr{L}\) be an invertible sheaf on \(\mathbb{P}\).  Let
\(\sigma_{0}, \sigma_{\infty}\) be two linearly independent elements
of \(\mathrm{H}^{0}(\mathbb{P};\mathscr{L})\).  For
\(t=[u,v] \in \mathbb{P}^{1}\), let \(Y_t \subset \mathbb{P}\) be the divisor
defined by the section \(u\sigma_{0}-v\sigma_{\infty}\).  Let
\(B = Y_{0} \cap Y_{\infty}\).  Let \(\mathcal{F}\) be an object of
\(D^b_c(\mathbb{P},\Lambda)\).
\begin{itemize}
\item For a general \(t \in \mathbb{P}^{1}\), and
any \(m\), we have
\begin{align}\label{eq:estimate-ineq}
  \dim  \mathrm{H}^{m}(Y_0;\mathcal{F}) \leq
  \ & \dim \mathrm{H}^{m}(Y_t;\mathcal{F}) +
      \dim \mathrm{H}^{m-1}(Y_t;\mathcal{F}(-1)) + \dim \mathrm{H}^{m}(\mathbb{P};\mathcal{F}) \\
    &+ \dim \operatorname{Coker}(\mathrm{H}^{m-2}(Y_{t};\mathcal{F}(-1)) \to \mathrm{H}^{m-2}(B;\mathcal{F}(-1))) \nonumber
  \\ \nonumber
    &+ \dim \operatorname{Ker}(\mathrm{H}^{m-1}(Y_{t};\mathcal{F}(-1)) \to \mathrm{H}^{m-1}(B;\mathcal{F}(-1)))\\\nonumber
  \leq\ & \dim \mathrm{H}^{m}(Y_{t};\mathcal{F})
          + 2\dim \mathrm{H}^{m-1}(Y_{t};\mathcal{F}) \\
         & + \dim \mathrm{H}^{m}(\mathbb{P};\mathcal{F})
          + \dim \mathrm{H}^{m-2}(B;\mathcal{F}). \nonumber
\end{align}
\item
If we assume additionally that
\(\mathcal{F} \in {}^{\mathrm{p}}D_{c}^{\geq0}(\mathbb{P},\Lambda)\), and
\(\mathscr{L}\) is ample, then for \(m \leq -1\), and
\(t \in \mathbb{P}^{1}\) sufficiently general, we have
\begin{equation}\label{eq:proposition}
\dim \mathrm{H}^{m}(Y_{0};\mathcal{F}) \leq
\dim \mathrm{H}^{m}(Y_{t};\mathcal{F}) +
\dim \mathrm{H}^{m-1}(Y_{t};\mathcal{F}(-1)) +
\dim \mathrm{H}^{m}(\mathbb{P};\mathcal{F}).
\end{equation}
\end{itemize}
\end{lemma}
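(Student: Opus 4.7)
The plan is to realize $Y_0$ as a fiber of the proper family resolving the pencil defined by $\sigma_0$ and $\sigma_\infty$, and to compare its cohomology with the generic fiber via excision and proper base change. Let $\pi \colon \tilde{\mathbb{P}} \to \mathbb{P}$ be the blow-up along $B = Y_0 \cap Y_\infty$ (equivalently the graph closure $\tilde{\mathbb{P}} = \overline{\{(x,t) \in \mathbb{P} \times \mathbb{P}^1 : x \in Y_t\}}$), equipped with the proper pencil map $f \colon \tilde{\mathbb{P}} \to \mathbb{P}^1$. A local computation shows that each fiber $\tilde{Y}_t := f^{-1}(t)$ maps isomorphically onto $Y_t$ under $\pi$, and the exceptional divisor $E$ is a trivial $\mathbb{P}^1$-bundle $E \cong B \times \mathbb{P}^1$ over $B$ (since $B$ is cut out by two sections of the same line bundle $\mathscr{L}$), with $f|_E$ the projection to the second factor. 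Set $\tilde{\mathcal{F}} := \pi^{\ast}\mathcal{F}$. The excision triangle for the closed inclusion $\tilde{Y}_0 \hookrightarrow \tilde{\mathbb{P}}$, combined with the properness of $\tilde{\mathbb{P}}$, yields
$$
\dim \mathrm{H}^m(Y_0; \mathcal{F}) \leq \dim \mathrm{H}^m(\tilde{\mathbb{P}}; \tilde{\mathcal{F}}) + \dim \mathrm{H}^{m+1}_c(\tilde{\mathbb{P}} \setminus \tilde{Y}_0; \tilde{\mathcal{F}}),
$$
and the blow-up distinguished triangle $\mathcal{F} \to R\pi_{\ast} \tilde{\mathcal{F}} \to \iota_{B\ast}\iota_B^{\ast} \mathcal{F}(-1)[-2]$ (a consequence of proper base change and the projective bundle formula for $E \to B$) bounds the first summand by $\dim \mathrm{H}^m(\mathbb{P}; \mathcal{F}) + \dim \mathrm{H}^{m-2}(B; \mathcal{F})$.

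For the second summand, $f$ restricts to a proper map $\tilde{\mathbb{P}} \setminus \tilde{Y}_0 \to \mathbb{A}^1 := \mathbb{P}^1 \setminus \{0\}$, so $\mathrm{H}^{m+1}_c(\tilde{\mathbb{P}} \setminus \tilde{Y}_0; \tilde{\mathcal{F}}) = \mathrm{H}^{m+1}_c(\mathbb{A}^1; \mathcal{G})$, where $\mathcal{G} := (Rf_{\ast}\tilde{\mathcal{F}})|_{\mathbb{A}^1}$ has derived stalk $\mathcal{G}_t = R\Gamma(Y_t; \mathcal{F})$ and is lisse near a generic $t \in \mathbb{A}^1$. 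The hypercohomology spectral sequence $E_2^{p,q} = \mathrm{H}^p_c(\mathbb{A}^1; \mathcal{H}^q(\mathcal{G})) \Rightarrow \mathrm{H}^{p+q}_c(\mathbb{A}^1; \mathcal{G})$, combined with $\mathrm{H}^{\bullet}_c(\mathbb{A}^1; \Lambda) \cong \Lambda(-1)[-2]$, yields the generic contribution $\dim \mathrm{H}^m(Y_t; \mathcal{F}) + \dim \mathrm{H}^{m-1}(Y_t; \mathcal{F}(-1))$ — the Tate twist in the second term inherited precisely from $\mathrm{H}^2_c(\mathbb{A}^1; \Lambda) \cong \Lambda(-1)$. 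The remaining contribution, arising from the failure of $\mathcal{G}$ to be lisse at finitely many special points (chiefly $\infty \in \mathbb{A}^1$, where $E$ meets $\tilde{Y}_\infty$ along a copy of $B$), is analyzed via the cospecialization morphism and the relative cohomology sequence of the pair $(Y_t, B)$; careful bookkeeping identifies this excess with the kernel and cokernel of the restriction $\mathrm{H}^{\ast}(Y_t; \mathcal{F}(-1)) \to \mathrm{H}^{\ast}(B; \mathcal{F}(-1))$ in degrees $m{-}2$ and $m{-}1$, completing \eqref{eq:estimate-ineq}.

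For the second bullet, when $\mathcal{F}$ is perverse coconnective and $\mathscr{L}$ is ample, the complement $\mathbb{P} \setminus Y_0$ is affine and the pencil map $p \colon \mathbb{P} \setminus Y_0 \to \mathbb{A}^1$ is an affine morphism. Artin's vanishing (Theorem~\ref{theorem:relative-affine-lefschetz}) then implies that $Rp_{!}(\mathcal{F}|_{\mathbb{P} \setminus Y_0})$ is perverse coconnective on $\mathbb{A}^1$, which strongly restricts the hypercohomology in low degrees: both the kernel/cokernel and the $\mathrm{H}^{m-2}(B)$ contributions vanish in the range $m \leq -1$, giving \eqref{eq:proposition}. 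The principal technical obstacle lies in the previous paragraph: matching the non-lisse contributions of $\mathcal{G}$ at the special points with the precise kernel–cokernel form of the error terms will require careful tracking of cospecialization morphisms using the trivial $\mathbb{P}^1$-bundle structure $E \cong B \times \mathbb{P}^1$ and the meeting geometry $E \cap \tilde{Y}_\infty \cong B \times \{\infty\}$.
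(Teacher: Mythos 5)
Your starting point (the incidence divisor $\mathcal{X}\subset \mathbb{P}\times\mathbb{P}^1$ of the pencil, with $E\cong B\times\mathbb{P}^1$ and each fiber $\tilde Y_t\cong Y_t$) matches the paper's construction, but the decomposition you run afterwards is different and breaks down at exactly the hard step. You write $\dim \mathrm{H}^m(Y_0;\mathcal{F})\le \dim \mathrm{H}^m(\tilde{\mathbb{P}};\tilde{\mathcal{F}})+\dim \mathrm{H}^{m+1}_c(\tilde{\mathbb{P}}\setminus\tilde Y_0;\tilde{\mathcal{F}})$ and reduce the second term to $\mathrm{H}^{m+1}_c(\mathbb{P}^1\setminus\{0\};\mathcal{G})$ with $\mathcal{G}=Rf_*\tilde{\mathcal{F}}$. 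But $\mathcal{G}$ fails to be lisse at \emph{every} special member of the pencil, not only near $\infty$: each such point $t_i\in\mathbb{P}^1\setminus\{0\}$ contributes its own vanishing-cycle terms to $\mathrm{H}^{*}_c(\mathbb{P}^1\setminus\{0\};\mathcal{G})$, and in positive characteristic $\mathrm{H}^1_c$ of the lisse locus carries additional Swan-conductor contributions at the punctures (including at $0$, which is circularly tied to the very degeneration you are estimating). None of these quantities appear in \eqref{eq:estimate-ineq}, so the promised ``careful bookkeeping'' identifying the excess with the kernel and cokernel of $\mathrm{H}^{*}(Y_t;\mathcal{F}(-1))\to\mathrm{H}^{*}(B;\mathcal{F}(-1))$ cannot succeed as stated: the excess genuinely contains more. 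The paper avoids this by never computing cohomology fiberwise over all of $\mathbb{P}^1\setminus\{0\}$. It localizes at $0$ on the strict henselian trait (Lemma~\ref{lemma:local-situation-estimate}, via the variation triangle and the Wang sequence), which yields exactly $\dim\mathrm{H}^m(Y_{\overline\eta};\mathcal{F})+\dim\mathrm{H}^{m-1}(Y_{\overline\eta};\mathcal{F}(-1))$ plus the image of the \emph{global} restriction $\mathrm{H}^m(\mathcal{X}^\circ;\varpi^{*}\mathcal{F})\to\mathrm{H}^m(Y_0;\mathcal{F})$ with $\mathcal{X}^\circ=f^{-1}(\mathbb{P}^1\setminus\{\infty\})$; that global term is then compared with $\mathrm{H}^m(\mathbb{P};\mathcal{F})$ sheaf-theoretically on $\mathbb{P}$, and the discrepancy is identified with the relative cohomology of $(Y_\infty,B)$ after arranging $\infty$ to lie in the lisse locus of $f_{*}\varpi^{*}\mathcal{F}$.

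A second, independent problem is the $\dim\mathrm{H}^{m-2}(B;\mathcal{F}(-1))$ term that your blow-up triangle forces into the bound. Because you work with the full proper $\tilde{\mathbb{P}}$ (fiber $\mathbb{P}^1$ over $B$), you incur the whole of $\dim\mathrm{H}^{m-2}(B;\mathcal{F}(-1))$, not merely the cokernel of $\mathrm{H}^{m-2}(Y_t;\mathcal{F}(-1))\to\mathrm{H}^{m-2}(B;\mathcal{F}(-1))$. This already precludes the sharp first line of \eqref{eq:estimate-ineq}, and it is fatal for \eqref{eq:proposition}: your claim that ``the $\mathrm{H}^{m-2}(B)$ contributions vanish in the range $m\le-1$'' is false. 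For $\mathcal{F}$ perverse coconnective, $\mathrm{H}^{m-2}(B;\mathcal{F})$ need not vanish for $m\le-1$; e.g.\ for $\mathcal{F}=\Lambda_{\mathbb{P}}[n]$ on smooth $\mathbb{P}$ of dimension $n\ge3$ one has $\mathrm{H}^{m-2}(B;\mathcal{F})=\mathrm{H}^{m+n-2}(B;\Lambda)\ne0$ for suitable $m\le-1$. What Artin vanishing actually kills for $m\le 0$ is the kernel/cokernel of restriction from $Y_t$ to $B$ (because $Y_t\setminus B$ is affine when $\mathscr{L}$ is ample), which is why the sharp form of \eqref{eq:estimate-ineq} must carry those terms rather than $\dim\mathrm{H}^{m-2}(B)$. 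The paper dodges the unwanted term by working over $\mathbb{P}^1\setminus\{\infty\}$, where $E^\circ=B\times\mathbb{A}^1$ is an $\mathbb{A}^1$-bundle over $B$ and hence contributes nothing beyond $\mathrm{H}^{*}(B)$ in the comparison of $\mathcal{X}^\circ$ with $\mathbb{P}$. (A minor further slip: the pencil map is defined on $\mathbb{P}\setminus B$, and even granting that its restriction to the affine $\mathbb{P}\setminus Y_0$ is an affine morphism, the critical degree $m=-1$ and the comparison with $Y_t$ still require the degeneration analysis that your sketch defers.)
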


\begin{remark}
The inequality \eqref{eq:proposition} can be viewed as a proper
version of the perverse degeneration lemma for affine morphisms (see
\cite[Proposition~9]{katz_perverse-origin},
\cite[Lemma~3.1]{wan-zhang_betti-number-bounds-for-varieties-and-exponential-sums0}).

For the study of middle-dimensional cohomology, the perverse
degeneration lemma suffices; for example,
Theorem~\ref{theorem:main}(2) can be derived from it.  However, since
we consider possibly highly singular varieties, it is necessary to
control cohomology groups beyond the middle dimension.  In these
situations, standard arguments such as the weak Lefschetz theorem are
not sufficient.  Instead, the more general (though weaker) estimate
given by~\eqref{eq:estimate-ineq} becomes essential.

Unlike the perverse degeneration lemma, where the total space of the
degeneration plays no role, the approach of
Lemma~\ref{lemma:proper-degeneration} links the cohomology of the
special (degenerating) hypersurface to both the cohomology of a
general fiber and that of the ambient variety.
\end{remark}

Before proving the Proper Degeneration Lemma,
we first establish a few lemmas that will be used in making this
connection.

\begin{lemma}%
\label{lemma:local-situation-estimate}
Let \((S,s,\eta,\overline{\eta})\) be the strict henselian trait with
residue field \(k\), generic point \(\eta\), and a fixed geometric
generic point \(\overline{\eta}\).  Let \(\mathcal{Y} \to S\) be a
proper \(S\)-scheme.  Let \(Y_s\), \(Y_{\eta}\), and
\(Y_{\overline{\eta}}\) be the special fiber, generic fiber, and
geometric generic fiber of \(\mathcal{Y}\) respectively. Then for any
\(\mathcal{F} \in D^b_c(\mathcal{Y},\Lambda)\), we have
\begin{align*}
\dim \mathrm{H}^m(Y_s;\mathcal{F}) &\leq
\dim \mathrm{H}^m(Y_{\overline{\eta}};\mathcal{F}) + \dim \mathrm{H}^{m-1}(Y_{\overline{\eta}};\mathcal{F}(-1)) \\
 &\quad  + \dim \operatorname{Im}(\mathrm{H}^m_{Y_s}(\mathcal{Y};\mathcal{F}) \to \mathrm{H}^m(Y_s;\mathcal{F})).
\end{align*}
\end{lemma}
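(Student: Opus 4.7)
The plan is to combine two standard ingredients: the excision long exact sequence on $(\mathcal{Y}, Y_{s})$ together with proper base change, and the Leray (equivalently, Hochschild--Serre) spectral sequence for the Galois cover $Y_{\overline{\eta}} \to \mathcal{Y}_{\eta}$. No perversity hypothesis on $\mathcal{F}$ is needed.

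First, I would apply $R\Gamma(\mathcal{Y}, -)$ to the excision triangle
\[
i_{\ast} R i^{!} \mathcal{F} \to \mathcal{F} \to R j_{\ast} j^{\ast} \mathcal{F} \xrightarrow{+1},
\]
where $i \colon Y_{s} \hookrightarrow \mathcal{Y}$ and $j \colon \mathcal{Y}_{\eta} \hookrightarrow \mathcal{Y}$ are the closed and open immersions. Proper base change along the proper map $\mathcal{Y} \to S$ identifies $R\Gamma(\mathcal{Y}, \mathcal{F}) \simeq R\Gamma(Y_{s}, i^{\ast}\mathcal{F})$, so the resulting long exact sequence
\[
\cdots \to \mathrm{H}^{m}_{Y_{s}}(\mathcal{Y}; \mathcal{F}) \to \mathrm{H}^{m}(Y_{s}; \mathcal{F}) \to \mathrm{H}^{m}(\mathcal{Y}_{\eta}; \mathcal{F}) \to \cdots
\]
yields immediately
\[
\dim \mathrm{H}^{m}(Y_{s};\mathcal{F}) \leq \dim \operatorname{Im}\bigl(\mathrm{H}^{m}_{Y_{s}}(\mathcal{Y};\mathcal{F}) \to \mathrm{H}^{m}(Y_{s};\mathcal{F})\bigr) + \dim \mathrm{H}^{m}(\mathcal{Y}_{\eta};\mathcal{F}).
\]

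Second, I would bound $\dim \mathrm{H}^{m}(\mathcal{Y}_{\eta}; \mathcal{F})$ in terms of the cohomology of the geometric generic fiber. The Leray spectral sequence for the structural morphism $\mathcal{Y}_{\eta} \to \operatorname{Spec} \eta$ reads
\[
E_{2}^{p,q} = \mathrm{H}^{p}\bigl(I,\; \mathrm{H}^{q}(Y_{\overline{\eta}}; \mathcal{F})\bigr) \;\Rightarrow\; \mathrm{H}^{p+q}(\mathcal{Y}_{\eta}; \mathcal{F}),
\]
where $I = \operatorname{Gal}(\overline{\eta}/\eta)$ is the inertia group. Since $\ell$ is invertible in $k$, the $\ell$-cohomological dimension of $I$ is at most one, so the spectral sequence degenerates into short exact sequences
\[
0 \to \mathrm{H}^{1}\bigl(I,\; \mathrm{H}^{m-1}(Y_{\overline{\eta}}; \mathcal{F})\bigr) \to \mathrm{H}^{m}(\mathcal{Y}_{\eta}; \mathcal{F}) \to \mathrm{H}^{m}(Y_{\overline{\eta}}; \mathcal{F})^{I} \to 0.
\]
Combining $\dim V^{I} \leq \dim V$ with the standard identification $\mathrm{H}^{1}(I, V) \simeq V_{I}(-1)$ valid for any finite-dimensional continuous $\ell$-adic representation $V$ of $I$ (which is precisely where the Tate twist in the statement comes from), I conclude
\[
\dim \mathrm{H}^{m}(\mathcal{Y}_{\eta}; \mathcal{F}) \leq \dim \mathrm{H}^{m}(Y_{\overline{\eta}}; \mathcal{F}) + \dim \mathrm{H}^{m-1}(Y_{\overline{\eta}}; \mathcal{F}(-1)).
\]
Substituting into the previous inequality gives the claimed bound.

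The only delicate point is verifying the cohomological-dimension bound and the convergence of the Leray spectral sequence in the $\ell$-adic setting. Both are standard for torsion coefficients when $\ell$ is invertible in $k$ (essentially because the wild inertia is pro-$p$ with $p \neq \ell$ and the tame inertia is topologically cyclic); the passage to finite extensions of $\mathbb{Q}_{\ell}$ or to $\overline{\mathbb{Q}}_{\ell}$ is routine via the usual $\mathbb{Z}_{\ell}$-limit formalism, so I do not anticipate any serious obstacle.
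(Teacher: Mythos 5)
Your argument is correct, and its first half is exactly the paper's: the excision sequence for the pair \((Y_s,\mathcal{Y}_\eta)\) together with proper base change over the henselian trait gives the term \(\dim\operatorname{Im}(\mathrm{H}^m_{Y_s}(\mathcal{Y};\mathcal{F})\to\mathrm{H}^m(Y_s;\mathcal{F}))+\dim\mathrm{H}^m(Y_\eta;\mathcal{F})\). For the second half the paper takes a superficially different route: it applies \(R\Gamma\) to the variation triangle \(i^{\ast}Rj_{\ast}\mathcal{F}\to R\Psi_{\overline{\eta}}(\mathcal{F})\to R\Psi_{\overline{\eta}}(\mathcal{F})(-1)\) and reads off the Wang exact sequence \(\cdots\to\mathrm{H}^{m-1}(Y_{\overline{\eta}};\mathcal{F}(-1))\to\mathrm{H}^m(Y_\eta;\mathcal{F})\to\mathrm{H}^m(Y_{\overline{\eta}};\mathcal{F})\to\cdots\). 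Your Hochschild--Serre computation is the same fact in different packaging: since \(\operatorname{cd}_\ell(I)\leq 1\), the spectral sequence degenerates into precisely the short exact sequences that the Wang sequence splices together, with \(\mathrm{H}^1(I,-)\) producing the Tate-twisted term. Neither version is more general or shorter, so the choice is a matter of taste; the nearby-cycles formulation has the mild advantage of staying at the level of complexes, which is the language used elsewhere in that section. One small caution in your write-up: the isomorphism \(\mathrm{H}^1(I,V)\simeq V_I(-1)\) as stated is literally valid for the tame quotient; in the wildly ramified case one should first pass to the wild-inertia invariants \(V^P\) (harmless, since \(P\) is pro-\(p\) with \(p\neq\ell\) and hence \(\ell\)-cohomologically trivial), giving \(\mathrm{H}^1(I,V)\simeq (V^P)_{I/P}(-1)\). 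Only the resulting dimension bound \(\dim\mathrm{H}^1(I,V)\leq\dim V\) is used, so the proof is unaffected.
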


\begin{proof}
By the proper base change theorem, we have
\(\mathrm{H}^{m}(\mathcal{Y};\mathcal{F}) \simeq \mathrm{H}^{m}(Y_s;\mathcal{F})\).  The
relative exact sequence for the inclusions
\(Y_s \xrightarrow{i} \mathcal{Y} \xleftarrow{j} Y_{\eta}\) reads:
\begin{equation*}
\cdots \to  \mathrm{H}^{m}_{Y_s}(\mathcal{Y};\mathcal{F}) \to \mathrm{H}^{m}(Y_s;\mathcal{F}) \to
\mathrm{H}^{m}(Y_{\eta};\mathcal{F}) \to \cdots.
\end{equation*}
Thus
\begin{equation}
\label{eq:relative-local-situation}
\dim \mathrm{H}^{m}(Y_{s};\mathcal{F}) \leq
\dim \operatorname{Im}(\mathrm{H}^{m}_{Y_{s}}(\mathcal{Y};\mathcal{F})\to\mathrm{H}^{m}(Y_{s};\mathcal{F}))
+ \dim \mathrm{H}^{m}(Y_{\eta};\mathcal{F}).
\end{equation}

Let \(R\Psi_{\overline{\eta}}(-)\) denote the nearby cycle functor
with respect to \(\overline{\eta}\).  Then we have the variation
triangle
\begin{equation*}
i^{\ast}Rj_{\ast}\mathcal{F} \to R\Psi_{\overline{\eta}}(\mathcal{F}) \to
R\Psi_{\overline{\eta}}(\mathcal{F})(-1),
\end{equation*}
which, after taking \(R\Gamma\) and applying the proper base change
theorem, gives rise to a long exact sequence, known as \emph{Hsien
Chung Wang's sequence}
\begin{equation}\label{eq:wang-sequence}
\cdots \to \mathrm{H}^{m-1}(Y_{\overline{\eta}};\mathcal{F}(-1)) \to \mathrm{H}^{m}(Y_{\eta};\mathcal{F})
\to \mathrm{H}^{m}(Y_{\overline{\eta}};\mathcal{F}) \to \cdots,
\end{equation}
By combining~\eqref{eq:relative-local-situation} and the Wang
sequence~\eqref{eq:wang-sequence}, we complete the proof of the Lemma.
\end{proof}

\begin{lemma}
\label{lemma:use-global-geometry-to-estimate-cohomology-with-support}
Let \(C\) be a smooth irreducible curve over \(k\) with a point
\(0 \in C\).  Let \(f\colon \widetilde{\mathcal{Y}} \to C\) be a proper
morphism.  For a point \(t\) of \(C\), let \(Y_t = f^{-1}(t)\).  Then
for any \(\mathcal{F} \in D^b_c(\widetilde{\mathcal{Y}},\Lambda)\) any
sufficiently general point \(t\) of \(C\), we have
\begin{equation*}
\dim \mathrm{H}^m(Y_0;\mathcal{F}) \leq
\dim \mathrm{H}^m(Y_t;\mathcal{F}) + \dim \mathrm{H}^{m-1}(Y_t;\mathcal{F}(-1))
+ \dim \operatorname{Im}(\mathrm{H}^m(\widetilde{\mathcal{Y}};\mathcal{F}) \to \mathrm{H}^m(Y_0;\mathcal{F})).
\end{equation*}
\end{lemma}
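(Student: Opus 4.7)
I plan to reduce to the strictly henselian trait situation of Lemma~\ref{lemma:local-situation-estimate} by localizing $C$ at $0$, and then to translate each resulting term back to the global setting using constructibility of direct images and étale excision. Setting $S = \operatorname{Spec}(\mathscr{O}_{C,0}^{\mathrm{sh}})$ with geometric generic point $\overline{\eta}$, the base change $\mathcal{Y}_{(0)} = \widetilde{\mathcal{Y}} \times_C S$ is proper over $S$ with special fiber $Y_0$, so Lemma~\ref{lemma:local-situation-estimate} gives
\[
\dim \mathrm{H}^m(Y_0;\mathcal{F}) \leq \dim \mathrm{H}^m(Y_{\overline{\eta}};\mathcal{F}) + \dim \mathrm{H}^{m-1}(Y_{\overline{\eta}};\mathcal{F}(-1)) + \dim \operatorname{Im}\bigl(\mathrm{H}^m_{Y_0}(\mathcal{Y}_{(0)};\mathcal{F}) \to \mathrm{H}^m(Y_0;\mathcal{F})\bigr).
\]

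To replace the first two terms on the right by their values at a general $t \in C$, I would invoke generic constancy: since $f$ is proper and $\mathcal{F}$ is constructible, each $R^{q}f_{\ast}\mathcal{F}$ is a constructible $\Lambda$-sheaf on $C$, hence lisse on a dense open subset $U \subseteq C$. By proper base change, for $t \in U$ the stalks of $R^{q}f_{\ast}\mathcal{F}$ at $\overline{t}$ and $\overline{\eta}$ have the same dimension, so $\dim \mathrm{H}^{q}(Y_t;\mathcal{F}) = \dim \mathrm{H}^{q}(Y_{\overline{\eta}};\mathcal{F})$, and the analogous identification holds after Tate twisting.

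For the third term I would appeal to étale excision. The functor $Ri^{!}$ along the closed immersion $i\colon Y_0 \hookrightarrow \widetilde{\mathcal{Y}}$ depends only on a pro-étale neighborhood of $Y_0$, so the natural pullback map $\mathrm{H}^m_{Y_0}(\widetilde{\mathcal{Y}};\mathcal{F}) \to \mathrm{H}^m_{Y_0}(\mathcal{Y}_{(0)};\mathcal{F})$ is an isomorphism. Combined with the proper base change identification $\mathrm{H}^m(\mathcal{Y}_{(0)};\mathcal{F}) \simeq \mathrm{H}^m(Y_0;\mathcal{F})$ and the compatibility of the ``forget support'' maps with restriction, a diagram chase shows that any class in the image of $\mathrm{H}^m_{Y_0}(\mathcal{Y}_{(0)};\mathcal{F}) \to \mathrm{H}^m(Y_0;\mathcal{F})$ already lies in the image of $\mathrm{H}^m(\widetilde{\mathcal{Y}};\mathcal{F}) \to \mathrm{H}^m(Y_0;\mathcal{F})$. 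Combining the three steps yields the desired inequality.

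The main obstacle is this étale excision identification in the last step; the rest is a direct application of Lemma~\ref{lemma:local-situation-estimate} together with a standard generic-constancy argument. While the excision statement is well known, one must extract it carefully from the invariance of $Ri^{!}$ under pro-étale localization of the ambient scheme, and package it into the commutative square so as to control the image rather than merely the source or target.
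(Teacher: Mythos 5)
Your proposal follows essentially the same route as the paper's proof: localize at the strict henselization of $C$ at $0$, apply Lemma~\ref{lemma:local-situation-estimate}, use constructibility of $R^{q}f_{\ast}\mathcal{F}$ plus proper base change to identify the geometric generic fiber terms with those at a general $t$, and use excision (invariance of local cohomology under passage to the henselian neighborhood) together with the commutative square relating the forget-supports maps to bound the image term. This is exactly the argument in the paper, so the proposal is correct and not materially different.
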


\begin{proof}
Let \(S\) be the strict localization of \(C\) at \(0\).  In the
context of Lemma~\ref{lemma:local-situation-estimate}, we have
\(s = 0\).  Let \(\mathcal{Y} = \widetilde{\mathcal{Y}}\times_{C} S\).  Since
\(R^{m}f_{\ast}\mathcal{F}\) is a constructible sheaf on \(C\), there
exists a Zariski open dense subset where the fiber dimension of
\(R^{m}f_{\ast}\mathcal{F}\) is constant.  By the proper base change
theorem, the fiber of \(R^{m}f_{\ast}\mathcal{F}\) at a point \(t\) is
identified with \(\mathrm{H}^m(Y_{t};\mathcal{F})\) and for these \(t\), we have
\(\dim\mathrm{H}^m(Y_t;\mathcal{F})=\dim\mathrm{H}^m(Y_{\overline{\eta}};\mathcal{F})\).
We have the following commutative diagram
\begin{equation*}
\begin{tikzcd}
\mathrm{H}^m_{Y_{0}}(\widetilde{\mathcal{Y}};\mathcal{F}) \ar[r,"\sim"] \ar[d] & \mathrm{H}^m_{Y_0}(\mathcal{Y};\mathcal{F}) \ar[d] \\
\mathrm{H}^m(\widetilde{\mathcal{Y}};\mathcal{F}) \ar[r] & \mathrm{H}^m(\mathcal{Y};\mathcal{F}) \ar[r,equal] & \mathrm{H}^m(Y_{0};\mathcal{F}).
\end{tikzcd}
\end{equation*}
Since local cohomology only depends on a neighborhood of \(Y_0\),
the first horizontal arrow is an isomorphism.  By the commutativity
of the diagram, we conclude that
\(\operatorname{Im}(\mathrm{H}^m_{Y_0}(\mathcal{Y};\mathcal{F}) \to \mathrm{H}^m(\mathcal{Y};\mathcal{F}))\) is
contained in
\(\operatorname{Im}(\mathrm{H}^m(\widetilde{\mathcal{Y}};\mathcal{F})\to\mathrm{H}^m(\mathcal{Y};\mathcal{F}))\).
Lemma~\ref{lemma:use-global-geometry-to-estimate-cohomology-with-support}
then follows from Lemma~\ref{lemma:local-situation-estimate}.
\end{proof}

\begin{proof}[Proof of Proper Degeneration Lemma~\ref{lemma:proper-degeneration}]
To simplify notation in the proof, all functors between derived
categories are interpreted as their derived versions.  For example,
\(\beta_{\ast}\) denotes \(R\beta_{\ast}\), and similarly for other
functors.

Let \(\mathcal{X}\) be the divisor of \(\mathbb{P} \times \mathbb{P}^{1}\) cut
out by \(u\sigma_{0} - v\sigma_{\infty}=0\).  Let
\(\varpi\colon \mathcal{X} \to \mathbb{P}\) be the restriction of
\(\mathrm{pr}_{1}\colon \mathbb{P}\times \mathbb{P}^{1} \to \mathbb{P}\).  Let
\(f\colon \mathcal{X} \to \mathbb{P}^{1}\) be the restriction of
\(\mathrm{pr}_{2}\colon \mathbb{P} \times \mathbb{P}^{1} \to \mathbb{P}^{1}\).
Define \(\mathcal{X}^{\circ}=f^{-1}(\mathbb{A}^{1}) \subset \mathcal{X}\).
Then we have \(\varpi^{-1}(B) = E \simeq B \times \mathbb{P}^{1}\),
and \(\varpi^{-1}(B) \cap \mathcal{X}^{\circ} = E^{\circ} \simeq B \times \mathbb{A}^{1}\).

In view of
Lemma~\ref{lemma:use-global-geometry-to-estimate-cohomology-with-support}
(with \(\widetilde{\mathcal{Y}} = \mathcal{X}^{\circ}\)),
in order to prove the first assertion,
it suffices to show that for any \(m\),
\begin{align}\label{eq:bounding-global-betti}
  \dim \mathrm{H}^{m}(\mathcal{X}^{\circ};\varpi^{\ast}\mathcal{F})
  \leq\ & \dim  \mathrm{H}^{m}(\mathbb{P};\mathcal{F})
             + \dim \operatorname{Coker}(\mathrm{H}^{m-2}(Y_{t};\mathcal{F}(-1)) \to \mathrm{H}^{m-2}(B;\mathcal{F}(-1))) \\
           & +\dim  \operatorname{Ker}(\mathrm{H}^{m-1}(Y_{t};\mathcal{F}(-1)) \to \mathrm{H}^{m-1}(B;\mathcal{F}(-1))). \nonumber
\end{align}
Consider the following commutative diagram, in which are squares are
cartesian:
\begin{equation*}
\begin{tikzcd}
E^{\circ} \ar[r,"i^{\circ}"] \ar[d,"\varpi|_{E^{\circ}}"] & \mathcal{X}^{\circ} \ar[d,"\varpi^{\circ}"] & \mathcal{X}^{\circ}\setminus E^{\circ} \ar[l,swap,"j^{\circ}"] \ar[d,"\varpi|_{\mathcal{X}^{\circ}\setminus E^{\circ}}"]\\
B \ar[r,swap,"\beta"] & \mathbb{P} & \mathbb{P}\setminus B \ar[l,"\alpha"]
\end{tikzcd}.
\end{equation*}
In the diagram, we have used the notation \(\varpi^{\circ}=\varpi|_{\mathcal{X}^{\circ}}\).
Note that
\(\varpi^{\circ}_{\ast}(\varpi^{\ast}\mathcal{F})|_{\mathcal{X}^{\circ}}=\varpi^{\circ}_{\ast}\varpi^{\circ\ast}\mathcal{F}\).
Thus we get the following (solid)
commutative diagram, in which each row is a distinguished triangle
\begin{equation}
\label{eq:diagram-compare}
\begin{tikzcd}[column sep=small]
\mathcolor{blue}{\alpha_{!}\alpha^{\ast}\mathcal{F}} \ar[r] \ar[d]& \mathcal{F} \ar[d] \ar[r] & \beta_{\ast}\mathcal{F}|_{B} \ar[d]\\
\mathcolor{blue}{\varpi_{\ast}^{\circ}j_{!}^{\circ}j^{\circ\ast}\varpi^{\circ\ast}\mathcal{F}} \ar[r] \ar[d,dashed]
& \varpi^{\circ}_{\ast}\varpi^{\circ\ast}\mathcal{F} \ar[r] \ar[d,dashed]&
\varpi^{\circ}_{\ast}i^{\circ}_{\ast}(\varpi^{\circ\ast}\mathcal{F})|_{E^{\circ}} \ar[r,equal]  & \beta_{\ast}(\varpi|_{E^{\circ}})_{\ast}(\varpi|_{E^{\circ}})^{\ast}(\mathcal{F}|_{B}) \\
\mathcal{G}^{\prime} \ar[r,equal,dashed] & \mathcal{G}^{\prime}
\end{tikzcd}
\end{equation}
Since \(E^{\circ}=B \times \mathbb{A}^{1}\),
the right vertical arrow is an isomorphism.  It follows
that the cones of the left and middle vertical arrows are isomorphic.
We denote these cones by \(\mathcal{G}^{\prime}\).

To prove \eqref{eq:bounding-global-betti}, we consider the following
commutative diagram
\begin{equation*}
\begin{tikzcd}[column sep=small]
\mathrm{H}^m_{\mathrm{c}}(\mathbb{P}\setminus B;\mathcal{F}) \ar[r]\ar[d] & \mathrm{H}^m(\mathbb{P};\mathcal{F}) \ar[r] \ar[d] &
\mathrm{H}^m(B;\mathcal{F}) \ar[d]  \ar[r] &\mathrm{H}^{m+1}_{\mathrm{c}}(\mathbb{P}\setminus B;\mathcal{F}) \ar[d]  \\
\mathrm{H}^m(\mathcal{X}^{\circ},E^{\circ};\varpi^{\ast}\mathcal{F}) \ar[r] & \mathrm{H}^m(\mathcal{X}^{\circ};\varpi^{\ast}\mathcal{F})  \ar[r]
& \mathrm{H}^m(E^{\circ};\varpi^{\ast}\mathcal{F})  \ar[r] &  \mathrm{H}^{m+1}(\mathcal{X}^{\circ},E^{\circ};\varpi^{\ast}\mathcal{F})
\end{tikzcd}
\end{equation*}
induced by the relative cohomology sequences.  In the diagram, all
rows are exact.  By the discussion from the above paragraph, we see
\[
\dim \mathrm{H}^{m}(\mathcal{X}^{\circ};\varpi^{\ast}\mathcal{F})
\leq \dim \mathrm{H}^{m}(\mathbb{P};\mathcal{F})+
\dim \mathrm{H}^{m}(\mathbb{P};\mathcal{G}^{\prime}),
\]
where \(\mathcal{G}^{\prime}\) is the cofiber (or cone) of the left
vertical arrow in the diagram \eqref{eq:diagram-compare}.

To complete the proof, we consider the following diagram
\begin{equation*}
\begin{tikzcd}
B \ar[r,"i_{\infty}"] \ar[d,"a"] & Y_{\infty} \ar[d,"u"] & Y_{\infty} \setminus B \ar[l,swap,"j_{\infty}"] \ar[d]\\
E \ar[r,"i"] & \mathcal{X} & \mathcal{X} \setminus E \ar[l,swap,"j"] \\
E^{\circ}\ar[r,"i^{\circ}"] \ar[u] & \mathcal{X}^{\circ}\ar[u,"v"]
& \mathcal{X}^{\circ}\setminus E^{\circ} \ar[l,swap,"j^{\circ}"]\ar[u]
\end{tikzcd}.
\end{equation*}
This diagram induces a diagram of distinguished triangles
\begin{equation*}
\begin{tikzcd}[column sep=small]
u_{\ast}u^{!} j_{!} j^{\ast}\varpi^{\ast}\mathcal{F} \ar[r] \ar[d] & u_{\ast}u^{!}\varpi^{\ast}\mathcal{F} \ar[r] \ar[d] &
u_{\ast}u^{!} i_{\ast}i^{\ast}\varpi^{\ast}\mathcal{F} \ar[d] \ar[r,equal] & u_{\ast}i_{\infty\ast}a^{!}i^{\ast}\varpi^{\ast}\mathcal{F}\\
j_{!}j^{\ast}\varpi^{\ast}\mathcal{F} \ar[r] \ar[d] & \varpi^{\ast}\mathcal{F} \ar[r] \ar[d] &
i_{\ast}i^{\ast}\varpi^{\ast}\mathcal{F} \ar[d]\\
v_{\ast}v^{\ast}j_{!}j^{\ast}\varpi^{\ast}\mathcal{F}\ar[r] \ar[d,equal]&
v_{\ast}\varpi^{\ast}\mathcal{F} \ar[r] &  v_{\ast}v^{\ast}i_{\ast}i^{\ast}\varpi^{\ast}\mathcal{F}\ar[d,equal]\\
v_{\ast}j^{\circ}_{!} (j^{\ast}\varpi^{\ast}\mathcal{F})|_{\mathcal{X}^{\circ}\setminus E^{\circ}}  & {}
& v_{\ast} i^{\circ}_{\ast} (i^{\ast}\varpi^{\ast}\mathcal{F})|_{E^{\circ}}.
\end{tikzcd}
\end{equation*}
The equal signs in the diagram are valid due to the proper base change theorem.
Applying the global section functor
\(\Gamma(\mathcal{X},\cdot)\) gives the following diagram, whose
rows and columns are all exact:
\begin{equation*}
\begin{tikzcd}[column sep=7pt]
\mathrm{H}^{m-1}_{Y_{\infty}}(\mathcal{X};\varpi^{\ast}\mathcal{F}) \ar[r,red] \ar[d]& \mathrm{H}^{m-1}_B(E;\varpi^{\ast}\mathcal{F}) \ar[r] \ar[d]
& \mathrm{H}^m(Y_{\infty};\mathcal{G}) \ar[r] \ar[d] &
\mathrm{H}^m_{Y_{\infty}}(\mathcal{X};\varpi^{\ast}\mathcal{F}) \ar[r,red] \ar[d] & \mathrm{H}^m_B(E;\varpi^{\ast}\mathcal{F}) \ar[d]\\
\mathrm{H}^{m-1}(\mathcal{X};\varpi^{\ast}\mathcal{F}) \ar[r] \ar[d] & \mathrm{H}^{m-1}(E;\varpi^{\ast}\mathcal{F}) \ar[d] \ar[r]
&  \mathcolor{blue}{\mathrm{H}^m_{\mathrm{c}}(\mathcal{X}\setminus E;\varpi^{\ast}\mathcal{F})} \ar[r] \ar[d] &
\mathrm{H}^m(\mathcal{X};\varpi^{\ast}\mathcal{F}) \ar[d] \ar[r] & \mathrm{H}^m(E;\varpi^{\ast}\mathcal{F}) \ar[d]\\
\mathrm{H}^{m-1}(\mathcal{X}^{\circ};\varpi^{\ast}\mathcal{F}) \ar[r] \ar[d] & \mathrm{H}^{m-1}(E^{\circ};\varpi^{\ast}\mathcal{F}) \ar[r] \ar[d]
& \mathcolor{blue}{\mathrm{H}^m(\mathcal{X}^{\circ},E^{\circ};\varpi^{\ast}\mathcal{F})} \ar[r] \ar[d]&
\mathrm{H}^m(\mathcal{X}^{\circ};\varpi^{\ast}\mathcal{F}) \ar[r] \ar[d] & \mathrm{H}^{m}(E^{\circ};\varpi^{\ast}\mathcal{F}) \ar[d]\\
\mathrm{H}^m_{Y_{\infty}}(\mathcal{X};\varpi^{\ast}\mathcal{F}) \ar[r,red] &
\mathrm{H}^m_B(E;\varpi^{\ast}\mathcal{F}) \ar[r] & \mathrm{H}^{m+1}(Y_{\infty};\mathcal{G}) \ar[r] &
\mathrm{H}^{m+1}_{Y_{\infty}}(\mathcal{X};\varpi^{\ast}\mathcal{F}) \ar[r,red] & \mathrm{H}^{m+1}_B(E;\varpi^{\ast}\mathcal{F})
\end{tikzcd}
\end{equation*}
where \(\mathcal{G}=u^{!}j_{!}j^{\ast}\varpi^{\ast}\mathcal{F}\).

By construction, we have \(\mathbb{P}\setminus B = \mathcal{X}\setminus E\),
whence (compare with the blue items in \eqref{eq:diagram-compare})
\begin{equation*}
\mathrm{H}^m_{\mathrm{c}}(\mathbb{P}\setminus B;\mathcal{F}) =
\mathrm{H}^m_{\mathrm{c}}(\mathcal{X}\setminus E;\varpi^{\ast}\mathcal{F}).
\end{equation*}
It follows from the five lemma that
\(\mathrm{H}^{m}(\mathbb{P};\mathcal{G}^{\prime})\cong\mathrm{H}^{m+1}(Y_{\infty};\mathcal{G})\).

It remains to show that
\begin{align}\label{eq:key-inequality}
  \dim \mathrm{H}^{m+1}(Y_{\infty};\mathcal{G}) \leq \
  &\dim \operatorname{Coker}(\mathrm{H}^{m-2}(Y_{t};\mathcal{F}(-1)) \to \mathrm{H}^{m-2}(B;\mathcal{F}(-1))) \\
  &+ \dim \operatorname{Ker}(\mathrm{H}^{m-1}(Y_{t};\mathcal{F}(-1)) \to \mathrm{H}^{m-1}(B;\mathcal{F}(-1))).\nonumber
\end{align}

The inequality~\eqref{eq:key-inequality} does not always hold in
general.  But under certain additional assumptions on \(Y_{\infty}\),
which we are always able to arrange by performing a linear change of
coordinates on \(\mathbb{P}^1\), the
inequality~\eqref{eq:key-inequality} holds.  To state these additional
hypotheses, let us consider the morphism
\(f\colon \mathcal{X} \to \mathbb{P}^1\).  We denote
\(L = f_{\ast}\varpi^{\ast}\mathcal{F}\) and
\(L^{\prime} = f|_{E\ast}(\varpi^{\ast}\mathcal{F}|_{E})\).  According
to the relative purity theorem (see \cite[Exposé XVI, 3.3]{sga4} or
\cite[Theorem~11.2,~Supplement]{kiehl-weissauer_weil-conjectures-perverse-sheaves}),
for any \(t\) in the Zariski open dense subset \(U\) of
\(\mathbb{P}^1\) where the cohomology sheaves of \(L\) and
\(L^{\prime}\) are all lisse, we have
\(\iota_t^{!}L \simeq \iota_{t}^{\ast}L[-2](-1)\) and
\(\iota_t^{!}L^{\prime} \simeq \iota_t^{\ast}L^{\prime}[-2](-1)\),
where \(\iota_t\) is the inclusion morphism
\(\{t\} \to \mathbb{P}^1\).  Our hypothesis is that \(\infty\) lies in
this ``ouvert de lissité'' \(U\).  This can always be achieved by
relabeling the hypersurfaces \(Y_t\) without changing \(Y_0\).  Under
this assumption, we have an isomorphism
\(\mathrm{H}^{m}(Y_{\infty};\mathcal{F})\cong\mathrm{H}^{m}(Y_{t};\mathcal{F})\)
for any \(t \in U\), as \(\Lambda\)-modules.

Since we have arranged that
\(\iota_{\infty}^{!}f_{\ast}\varpi^{\ast}\mathcal{F}\simeq\iota_{\infty}^{\ast}f_{\ast}\varpi^{\ast}\mathcal{F}[-2](-1)\),
applying proper base change to the following diagram
\begin{equation*}
\begin{tikzcd}
Y_{\infty}\ar[r,"u"] \ar[d] & \mathcal{X} \ar[d,"f"] \\ \{\infty\}\ar[r,"\iota_{\infty}"] & \mathbb{P}^1
\end{tikzcd},
\end{equation*}
we have
\begin{equation*}
\begin{tikzcd}[column sep=small]
\iota_{\infty}^{\ast}f_{\ast}\varpi^{\ast}\mathcal{F}[-2](-1) \ar[d,"\sim" {anchor=south, rotate=90}] \ar[r,equal] &
\Gamma(Y_{\infty};u^{\ast}\varpi^{\ast}\mathcal{F}[-2](-1)) \ar[r,equal] & \Gamma(Y_{\infty};\mathcal{F}[-2](-1)) \\
\iota_{\infty}^{!}f_{\ast}\varpi^{\ast}\mathcal{F} \ar[r,equal] &
\Gamma(Y_{\infty};u^{!}\varpi^{\ast}\mathcal{F})
\end{tikzcd}
\end{equation*}
Since \(E = \mathbb{P}^{1} \times B\), the relative purity isomorphism gives
\begin{equation*}
\Gamma(B;\mathcal{F}[-2](-1)) \simeq \Gamma_{B}(E;\varpi^{\ast}\mathcal{F}).
\end{equation*}
It follows that
\(\mathrm{H}^{\ast}_{Y_{\infty}}(\mathcal{X};\varpi^{\ast}\mathcal{F})\to\mathrm{H}_B^{\ast}(E;\varpi^{\ast}\mathcal{F})\)
can be identified with the natural restriction map
\[\mathrm{H}^{\ast-2}(Y_{\infty};\mathcal{F}(-1)) \to \mathrm{H}^{\ast-2}(B;\mathcal{F}(-1)).\]
We have finished the proof of the first part of the theorem.

Next we prove the second part.  We thus assume that
\(\mathcal{F} \in {}^{\mathrm{p}}D_{c}^{\geq 0}({\mathbb{P}},\Lambda)\), and
\(\mathscr{L}\) is ample.  Since \(\mathbb{P} \times \mathbb{P}^1\) is
smooth over \(\mathbb{P}\), we know that
\(\mathrm{pr}_1^{\ast}\mathcal{F}[1]\) is in
\({}^{\mathrm{p}}D_c^{\geq0}(\mathbb{P}\times \mathbb{P}^1,\Lambda)\).

Since \(\mathcal{X}\) is locally defined by a single
equation in \(\mathbb{P} \times \mathbb{P}^1\),
Lemma~\ref{lemma:estimate-perv-dergee} implies that
\(\varpi^{\ast}\mathcal{F}=(\mathrm{pr}_1^{\ast}\mathcal{F}[1])|_{\mathcal{X}}[-1]\)
is perverse coconnective.  As \(Y_{t}\) is a Cartier divisor
of \(\mathbb{P}\), by the same lemma, \(\mathcal{F}[-1]|_{Y_{t}}\)
is also perverse coconnective.

Since \(\mathscr{L}\) is an ample line bundle, \(Y_{t} \setminus B\)
is an affine variety.  By applying Artin's vanishing theorem
(Theorem~\ref{theorem:relative-affine-lefschetz}) to
\(\mathcal{F}[-1]|_{Y_{\infty}}\), we conclude that
\(\mathrm{H}^i_c(Y_{t} \setminus B;\mathcal{F}[-1]) = 0\) for
\(i < 0\).  This implies that diagram
\begin{equation*}
\mathrm{H}^{m-1}(Y_{t};\mathcal{F}[-1]) \to \mathrm{H}^{m-1}(B;\mathcal{F}[-1])
\end{equation*}
is bijective for \(m<0\), and injective for \(m=0\).  This shows that
in the range \(m\leq 0\), the ``kernel'' summand and the ``cokernel''
summand in \eqref{eq:estimate-ineq} are zero.  The proposition is
proved.
\end{proof}

\section{Hypersurface restriction of perverse sheaves }
\label{sec:main-theorem}
\begin{situation}[Setup]\label{setup}
Throughout this section, let \(\mathbb{P}\) be a smooth projective
variety over \(k\) endowed with a very ample invertible sheaf
\(\mathscr{L}\).  For simplicity, we shall refer to the hypersurface
cut out by a section of \(\mathscr{L}^{\otimes d}\) as a ``degree
\(d\) hypersurface'' in \(\mathbb{P}\).  If
\(\mathcal{F} \in D^b_c(\mathbb{P},\Lambda)\), we write
\(\mathrm{cc}(\mathcal{F}) \in \mathrm{CH}_{\ast}(\mathbb{P})\)
instead of \(\mathrm{cc}_{\mathbb{P}}(\mathcal{F})\) and \(\mathrm{cc}_j(\mathcal{F})\) for \(\mathrm{cc}_{\mathbb{P},j}(\mathcal{F})\) where \(j \in \mathbb{Z}_{\geq0}\).
\end{situation}

\begin{definition}
Let \(\mathcal{F} \in D^{b}_{c}(\mathbb{P},\Lambda)\) be a
constructible complex of \(\Lambda\)-sheaves with support
\(S = \operatorname{Supp}(\mathcal{F})\), endowed with the reduced
structure.  We associate three numbers to \(\mathcal{F}\)
\begin{itemize}
\item We define \(n(\mathcal{F})\) to be the dimension of \(S\).
\item Let \(S^{\prime}\) be the union of irreducible components of
\(S\) of dimension \(n(\mathcal{F})\).  We define
\(\delta(\mathcal{F}) = \deg S^{\prime}\) to be the degree of
\(S^{\prime}\) with respect to \(\mathscr{L}\).
\item We define
\(r(\mathcal{F}) = (-1)^{n(\mathcal{F})} \cdot \operatorname{rank}^{\circ}(\mathcal{F}|_S)\)
to be the virtual generic rank of \(\mathcal{F}|_{S}\) times
\((-1)^{n(\mathcal{F})}\).
(Definition~\ref{definition:vir-gen-rank}).  Note that if
\(\mathcal{F}\) is a nonzero perverse sheaf, then
\(r(\mathcal{F}) \geq 1\).
\end{itemize}
\end{definition}

The goal of this section is to give a proof of the following theorem.

\begin{theorem}\label{theorem:main}
Let notation be as in \ref{setup}.  Suppose \(\mathcal{P}\) is a
perverse sheaf on \(\mathbb{P}\).  Then there exist constants
\(C_1, C_2, C_3 > 0\), independent of the choice of \(Y\) below, such
that
\begin{enumerate}
\item For any degree \(d\) hypersurface \(Y \subset \mathbb{P}\),
we have
\[
\dim \mathrm{H}^{-1}(Y;\mathcal{P}|_Y) \leq r(\mathcal{P}) \cdot \delta(\mathcal{P}) \cdot d^{n(\mathcal{P})} +
C_1 \cdot d^{n(\mathcal{P})-1}.
\]
\item For any degree \(d\) hypersurface \(Y \subset \mathbb{P}\) such
that \(\mathcal{P}|_Y[-1]\) is a perverse sheaf, we have
\[
B(Y, \mathcal{P}|_{Y}) \leq r(\mathcal{P}) \cdot \delta(\mathcal{P}) \cdot d^{n(\mathcal{P})} + C_2 \cdot d^{n(\mathcal{P})-1}.
\]
\item For any degree \(d\) hypersurface \(Y \subset \mathbb{P}\) whatsoever, we
have
\[
B(Y, \mathcal{P}|_{Y}) \leq 3\cdot r(\mathcal{P}) \cdot \delta(\mathcal{P}) \cdot d^{n(\mathcal{P})} + C_3 \cdot d^{n(\mathcal{P})-1}.
\]
\end{enumerate}
\end{theorem}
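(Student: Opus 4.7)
The plan is to first establish the bound for a sufficiently general degree-$d$ hypersurface, and then use the Proper Degeneration Lemma~\ref{lemma:proper-degeneration} to transfer it to an arbitrary $Y$. For the generic case, I would apply Lemma~\ref{lemma:cc-hypersurface} (with $\mathscr{L}$ replaced by $\mathscr{L}^{\otimes d}$) and Lemma~\ref{lemma:range-of-cc} to compute
\[
\chi(Y_{\mathrm{gen}},\mathcal{P}|_{Y_{\mathrm{gen}}}) = -\sum_{m\geq 1} d^{m}\int_{\mathbb{P}} c_{1}(\mathscr{L})^{m}\capprod \mathrm{cc}_{m}(\mathcal{P}) = (-1)^{n+1} r(\mathcal{P})\delta(\mathcal{P}) d^{n} + O(d^{n-1}),
\]
where $n=n(\mathcal{P})$. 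To convert this into a Betti bound, I would show that, for generic $Y_{\mathrm{gen}}$, each group $\mathrm{H}^{m}(Y_{\mathrm{gen}};\mathcal{P}|_{Y_{\mathrm{gen}}})$ is $O(1)$ in $d$ for $m\neq -1$. For $m\leq -2$ this follows from Lemma~\ref{lemma:weak-lefschetz} applied to the perverse (hence coconnective) sheaf $\mathcal{P}$. For $m\geq 0$ I would use Verdier duality together with the purity isomorphism $\iota^{!}\mathcal{P}\simeq \iota^{\ast}\mathcal{P}(-1)[-2]$, valid for generic $Y_{\mathrm{gen}}$, and apply the dual form of Artin vanishing (Theorem~\ref{theorem:relative-affine-lefschetz}) to $\mathbb{D}_{\mathbb{P}}\mathcal{P}$ on the affine complement $\mathbb{P}\setminus Y_{\mathrm{gen}}$; this forces $\mathrm{H}^{m}(Y_{\mathrm{gen}};\mathcal{P}|_{Y_{\mathrm{gen}}}) \cong \mathrm{H}^{m+2}(\mathbb{P};\mathcal{P})(1)$ for $m \geq 0$. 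The resulting concentration in degree $-1$ then yields $B(Y_{\mathrm{gen}},\mathcal{P}|_{Y_{\mathrm{gen}}}) = r\delta d^{n} + O(d^{n-1})$.

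For part~(1), I would form a pencil with $Y_{0}=Y$ and $Y_{\infty}$ generic, invoke the sharp estimate \eqref{eq:proposition} at $m=-1$ (allowed since $\mathcal{P}$ is perverse coconnective and $\mathscr{L}^{\otimes d}$ is ample), and bound each term on the right using the generic-case estimate and Lemma~\ref{lemma:weak-lefschetz}. For part~(2), assuming $\mathcal{P}|_{Y}[-1]$ is perverse, I would sum \eqref{eq:proposition} over $m\leq -1$ to obtain $\sum_{m\leq -1}\dim\mathrm{H}^{m}(Y;\mathcal{P}|_{Y})\leq r\delta d^{n} + O(d^{n-1})$; for $m\geq 0$, I would exploit the equivalent hypothesis that $\iota^{!}\mathbb{D}_{\mathbb{P}}\mathcal{P}[1]$ is perverse and apply the same \eqref{eq:proposition}-based argument to $\mathbb{D}_{\mathbb{P}}\mathcal{P}$, then translate back via Verdier duality on $Y$. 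For part~(3), I would apply the general estimate \eqref{eq:estimate-ineq} and sum over $m$. A key observation is that for $m\leq 0$ with $Y_{t}$ generic, Lemma~\ref{lemma:weak-lefschetz} applied to the perverse sheaf $\mathcal{P}|_{Y_{t}}[-1]$ with the ample hypersurface $B\subset Y_{t}$ makes the cokernel and kernel terms in \eqref{eq:estimate-ineq} vanish, so the sharp estimate \eqref{eq:proposition} holds in that range; only for $m\geq 1$ does the base-locus term $\mathrm{H}^{m-2}(B;\mathcal{P}|_{B})$ genuinely appear. I would handle this remaining contribution by induction on $n(\mathcal{P})$: the auxiliary perverse sheaf $\mathcal{Q}=\iota_{Y_{\infty}\ast}(\mathcal{P}|_{Y_{\infty}}[-1])$ on $\mathbb{P}$ has $n(\mathcal{Q})=n-1$, $r(\mathcal{Q})=r(\mathcal{P})$, $\delta(\mathcal{Q})=d\,\delta(\mathcal{P})$, and by proper base change $B(Y_{0},\mathcal{Q}|_{Y_{0}})=B(B,\mathcal{P}|_{B})$, so the inductive hypothesis applies to the right side.

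The hard part will be the precise accounting of constants in part~(3). The naive summation of \eqref{eq:estimate-ineq} gives $B(Y_{0})\leq 3\,B(Y_{t}) + B(\mathbb{P},\mathcal{P}) + B(B,\mathcal{P}|_{B})$, and since Lemma~\ref{lemma:higher-intersection-cc} yields $\chi(B,\mathcal{P}|_{B}) = (n-1)(-1)^{n} r\delta d^{n} + O(d^{n-1})$, the term $B(B,\mathcal{P}|_{B})$ is itself of order $d^{n}$ and a priori threatens a leading coefficient that grows with $n(\mathcal{P})$. Overcoming this requires exploiting the sharper kernel/cokernel form of \eqref{eq:estimate-ineq} in conjunction with the observation above that the error terms vanish in the range $m\leq 0$, so that only $m\geq 1$ contributes a genuine base-locus term, which must be carefully absorbed by the inductive step so that the leading constant remains $3r\delta$.
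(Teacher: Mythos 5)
Your generic computation and your treatment of part~(1) match the paper's argument (the paper packages the generic case as Lemma~\ref{lemma:middle-general-estimate} and then applies \eqref{eq:proposition} to a pencil with generic \(Y_\infty\)). The problems are in parts (2) and (3).

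For part~(2), your plan for the degrees \(m\geq 0\) does not close. Verdier duality on the proper variety \(Y\) identifies \(\mathrm{H}^{m}(Y;\iota^{\ast}\mathcal{P})\) with the dual of \(\mathrm{H}^{-m}(Y;\iota^{!}\mathbb{D}_{\mathbb{P}}\mathcal{P})\), i.e.\ it converts the \emph{star} restriction into the \emph{shriek} restriction. The Proper Degeneration Lemma (and part~(1)) only control \(\iota^{\ast}\)-restrictions, so "applying the same \eqref{eq:proposition}-based argument to \(\mathbb{D}_{\mathbb{P}}\mathcal{P}\)" bounds \(\mathrm{H}^{\leq-1}(Y;\iota^{\ast}\mathbb{D}\mathcal{P})\), whose dual is \(\mathrm{H}^{\geq1}(Y;\iota^{!}\mathcal{P})\) — not the group you need. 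For an arbitrary (non-transverse) \(Y\) there is no purity isomorphism \(\iota^{!}\mathcal{P}\simeq\iota^{\ast}\mathcal{P}[-2](-1)\), and the discrepancy is exactly the vanishing-cycle contribution you cannot control. The paper's route (\ref{situation:proof-2}) is different: since \(\mathcal{P}|_{Y}[-1]\) is perverse, hence perverse \emph{connective}, the perverse weak Lefschetz theorem (Lemma~\ref{lemma:gysin}) applied to a general flag \(A_{\bullet}\) of linear sections gives surjective Gysin maps \(\mathrm{H}^{-1}(Y\cap A_{e};\mathcal{P}_{e}|_{Y\cap A_{e}})\twoheadrightarrow \mathrm{H}^{e-1}(Y;\mathcal{P}|_{Y})\), and one then quotes part~(1) on \(A_{e}\), where the exponent drops to \(d^{\,n-e}\).

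For part~(3) the gap is the one you yourself flag, and it is fatal as set up: with your pencil (generic \(Y_{\infty}\) of degree \(d\)), the base locus \(B=Y\cap Y_{\infty}\) supports a perverse sheaf with \(\delta(\mathcal{Q})=d\,\delta(\mathcal{P})\), so your induction returns \(B(B,\mathcal{P}|_{B})\approx 3r\delta d^{n}\) — the leading coefficient becomes \(6r\delta\), not \(3r\delta\) (and the inductive "constant" \(C(\mathcal{Q})\) itself depends on \(d\) through \(Y_{\infty}\), so the induction is not even uniform). The paper's fix (Lemma~\ref{lemma:bad-factor}) is to degenerate along the pencil \(f+tg^{d}\) with \(g\) a \emph{general section of \(\mathscr{L}\)}: then \(Y_{\infty}\) is set-theoretically a fixed general hyperplane \(L\), so \(B=Y\cap L\) is a degree-\(d\) hypersurface in the \((n-1)\)-dimensional \(L\cap\operatorname{Supp}\mathcal{P}\), and \(B(B,\mathcal{P}|_{B}[-2])=O(d^{n-1})\) by part~(1) applied on \(L\). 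Moreover the paper invokes the crude estimate \eqref{eq:estimate-ineq} only in the single degree computing \(\mathrm{H}^{0}(Y;\mathcal{P}|_{Y})\) (yielding the factor \(2\) there, hence \(3\) in total with \(\mathrm{H}^{-1}\)); the degrees \(e\geq1\) are handled by noting that \(\mathcal{P}|_{Y}\) is still perverse connective by Lemma~\ref{lemma:estimate-perv-dergee} even when \(\mathcal{P}|_{Y}[-1]\) is not perverse, so the flag/Gysin reduction of \ref{situation:proof-2} applies and those degrees contribute only \(O(d^{n-1})\). Without both of these devices — the \(g^{d}\) degeneration and the Gysin reduction for \(e\geq1\) — your summation of \eqref{eq:estimate-ineq} over all \(m\) cannot recover the leading constant \(3\).
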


\begin{remark}
As we will see, the constant \(C_1\) can be made quite explicit
(see~\ref{situation:explication-1}), as it is only related to the
characteristic classes \(\mathrm{cc}_j(\mathcal{P})\) of
\(\mathcal{P}\) where \(j < n(\mathcal{P})\), as well as the Betti
numbers \(\dim \mathrm{H}^{-1}(\mathbb{P},\mathcal{P})\) and
\(\dim \mathrm{H}^{-2}(\mathbb{P};\mathcal{P})\).

The constants \(C_2, C_{3}\) are slightly more complicated than
\(C_{1}\) (see~\ref{situation:explication-2},
\ref{situation:explication-3}), but they are ultimately determined by
characteristic numbers associated to
\(\mathrm{cc}(\mathcal{P}|_{A_j})\), where
\(A_0 \supset A_1 \supset \cdots\) is a general flag of linear
subspaces in \(\mathbb{P}\).  Here, a ``linear subspace'' refers to an
intersection of divisors defined by nonzero sections in
\(\mathrm{H}^0(\mathbb{P};\mathscr{L})\).

The factor \(3\) appearing in Theorem~\ref{theorem:main}(3) reflects a
technical issue: we are currently unable to control
\(\mathrm{H}^1(Y;\mathcal{P}|_Y[-1])\) when \(\mathcal{P}|_Y[-1]\) is
not perverse.  We rely on \eqref{eq:estimate-ineq} to get a worse
estimate.
\end{remark}

Before giving the proof of the theorem, we collect some preliminary
results that will be used. The full proof appears at the end of this
section (see \ref{situation:proof-1}, \ref{situation:proof-2},
\ref{situation:proof-3}).

\begin{lemma}\label{lemma:generic-euler-characteristic-estimate}
Let \(\mathcal{F} \in D^{b}_{c}(\mathbb{P},\Lambda)\) be a constructible
complex of \(\Lambda\)-sheaves on \(\mathbb{P}\).  Then for the hypersurface
\(Y\) defined by a sufficiently general section of
\(\mathscr{L}^{\otimes d}\) we have
\begin{align*}
  \chi(Y, \mathcal{F}|_{Y}[-1])
  &= r(\mathcal{F})\cdot \delta(\mathcal{F}) \cdot d^{n(\mathcal{F})} + \sum_{j=1}^{n(\mathcal{F})-1} d^{j} \cdot \big( c_{1}(\mathscr{L})^{j} \capprod \mathrm{cc}_{j}(\mathcal{F})\big) \\
  &= r(\mathcal{F}) \cdot \delta(\mathcal{F}) \cdot d^{n(\mathcal{F})} + O(d^{n(\mathcal{F})-1}).
\end{align*}
\end{lemma}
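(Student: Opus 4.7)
The plan is to apply Lemma~\ref{lemma:cc-hypersurface} with the very ample line bundle $\mathscr{L}^{\otimes d}$ in place of $\mathscr{L}$, then read off the degree in $d$ by expanding the rational function $-t/(1-t)$ as a formal power series.

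More precisely, for a sufficiently general section of $\mathscr{L}^{\otimes d}$, Lemma~\ref{lemma:cc-hypersurface} (applied to $\mathscr{L}^{\otimes d}$) and the relation $c_1(\mathscr{L}^{\otimes d}) = d \cdot c_1(\mathscr{L})$ give
\[
\chi(Y, \mathcal{F}|_{Y}) = -\int_{\mathbb{P}} \frac{d \cdot c_1(\mathscr{L})}{1 - d \cdot c_1(\mathscr{L})} \capprod \mathrm{cc}(\mathcal{F}) = -\sum_{m \geq 1} d^{m} \int_{\mathbb{P}} c_1(\mathscr{L})^m \capprod \mathrm{cc}(\mathcal{F}).
\]
Since $c_1(\mathscr{L})^m \capprod \mathrm{cc}_j(\mathcal{F})$ lives in $\mathrm{CH}_{j-m}(\mathbb{P})$, only the summand with $j = m$ contributes to the degree, so each term equals $d^m$ times the degree of $c_1(\mathscr{L})^m \capprod \mathrm{cc}_m(\mathcal{F})$. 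Since $\chi(Y, \mathcal{F}|_{Y}[-1]) = -\chi(Y, \mathcal{F}|_{Y})$, we obtain
\[
\chi(Y, \mathcal{F}|_Y[-1]) = \sum_{m \geq 1} d^{m} \cdot \bigl( c_1(\mathscr{L})^m \capprod \mathrm{cc}_{m}(\mathcal{F}) \bigr).
\]

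The second step is to invoke Lemma~\ref{lemma:range-of-cc}: the sum truncates at $m = n(\mathcal{F})$ because $\mathrm{cc}_m(\mathcal{F}) = 0$ for $m > n(\mathcal{F})$, and for the top term $m = n(\mathcal{F})$ one has $\mathrm{cc}_{n(\mathcal{F})}(\mathcal{F}) = (-1)^{n(\mathcal{F})} \operatorname{rank}^{\circ}(\mathcal{F}|_S) \cdot [S']$ where $S'$ is the union of the top-dimensional components of the support. Its cap product with $c_1(\mathscr{L})^{n(\mathcal{F})}$ has degree $(-1)^{n(\mathcal{F})} \operatorname{rank}^{\circ}(\mathcal{F}|_S) \cdot \delta(\mathcal{F}) = r(\mathcal{F}) \cdot \delta(\mathcal{F})$. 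The remaining terms, indexed by $1 \leq j \leq n(\mathcal{F})-1$, give the error term $\sum_{j=1}^{n(\mathcal{F})-1} d^{j} \cdot (c_1(\mathscr{L})^j \capprod \mathrm{cc}_j(\mathcal{F}))$, which is manifestly $O(d^{n(\mathcal{F})-1})$. This matches the stated formula.

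There is really no hard step: the calculation is a direct substitution, and the only thing to check is that the hypothesis of Lemma~\ref{lemma:cc-hypersurface} (that $Y$ be cut out by a sufficiently general section) is preserved when we replace $\mathscr{L}$ by $\mathscr{L}^{\otimes d}$, which is immediate since $\mathscr{L}^{\otimes d}$ is again very ample.
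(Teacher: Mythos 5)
Your proposal is correct and follows essentially the same route as the paper: apply Lemma~\ref{lemma:cc-hypersurface} with \(\mathscr{L}^{\otimes d}\), expand \(-t/(1-t)\) to extract the coefficient of \(d^{m}\), and identify the leading term via Lemma~\ref{lemma:range-of-cc}. The only cosmetic difference is that the paper absorbs the shift \([-1]\) into the characteristic class of \(\iota_{\ast}\iota^{\ast}\mathcal{F}[-1]\) before integrating, whereas you compute \(\chi(Y,\mathcal{F}|_{Y})\) first and then negate; these are equivalent.
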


\begin{proof}
By Lemma~\ref{lemma:cc-hypersurface}, we have
\begin{align*}
  \mathrm{cc}_e(\iota_{\ast}\iota^{\ast}\mathcal{F}[-1])
  &= \left[\frac{c_1(\mathscr{L}^{\otimes d})}{1-c_1(\mathscr{L}^{\otimes d})} \capprod \mathrm{cc}(\mathcal{F})\right]_e \\
  &=  \sum_{j = e+1}^{n(\mathcal{F})} d^{j} \cdot c_1(\mathscr{L})^{j-e} \capprod \mathrm{cc}_j(\mathcal{F}).
\end{align*}
When \(e = 0\), applying the index formula, we get
\begin{equation}
\label{eq:characteristic-class-polynomial}
\chi(Y;\mathcal{F}|_Y[-1]) =
d^{n(\mathcal{F})}\cdot  \left( c_{1}(\mathscr{L})^{n(\mathcal{F})} \capprod \mathrm{cc}_{n(\mathcal{F})}(\mathcal{F}) \right) +
\sum_{j = 1}^{n(\mathcal{F}) - 1} d^j \cdot \left( c_1(\mathscr{L})^j\capprod \mathrm{cc}_j(\mathcal{F}) \right).
\end{equation}
By Lemma~\ref{lemma:range-of-cc}, we have
\[
c_1(\mathscr{L})^{n(\mathcal{F})} \capprod \mathrm{cc}_{n(\mathcal{F})} (\mathcal{F}) = r(\mathcal{F}) \cdot \delta(\mathcal{F}).
\]
This completes the proof.
\end{proof}

For later use, we record the following lemma.

\begin{lemma}\label{lemma:higher-intersection-euler}
Let \(A\) be a general codimension \(r\)
linear subspace.
Then for a a general degree \(d\) hypersurface \(Y \subset \mathbb{P}\),
we have
\begin{align*}
  \chi(Y \cap A)
  &= \sum_{i=0}^{n(\mathcal{F})-r} d^i \cdot \sum_{m=r}^{n(\mathcal{F})-i} \binom{m-1}{r-1} \int_{\mathbb{P}} c_1(\mathscr{L})^{m+i} \capprod \mathrm{cc}_{m+i}(\mathcal{F})
\end{align*}
\end{lemma}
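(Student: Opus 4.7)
The plan is to realize $Y \cap A \subset \mathbb{P}$ as an iterated generic hypersurface section: $A$ is cut out of $\mathbb{P}$ by $r$ sufficiently general sections of $\mathscr{L}$, and $Y$ by one sufficiently general section of $\mathscr{L}^{\otimes d}$. By Bertini, all the hypersurfaces involved meet transversally in the sense required for Lemma~\ref{lemma:cc-hypersurface}. Writing $h = c_1(\mathscr{L})$ and $\iota\colon Y \cap A \hookrightarrow \mathbb{P}$ for the closed embedding, the strategy is to first compute the total characteristic class $\mathrm{cc}_{\mathbb{P}}(\iota_{\ast}\iota^{\ast}\mathcal{F})$ on the nose, then apply the index formula (Theorem~\ref{theorem:index}) and extract the Euler characteristic.

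Concretely, I would first apply Lemma~\ref{lemma:higher-intersection-cc} to the $r$ sections of $\mathscr{L}$ that cut out $A$, giving
\[
\mathrm{cc}_{\mathbb{P}}(\iota_{A,\ast}\iota_A^{\ast}\mathcal{F}) = (-1)^r \left(\frac{h}{1-h}\right)^r \capprod \mathrm{cc}_{\mathbb{P}}(\mathcal{F}),
\]
and then apply Lemma~\ref{lemma:cc-hypersurface} to the degree-$d$ hypersurface $Y$ (with $\mathscr{L}$ replaced by $\mathscr{L}^{\otimes d}$) applied to the complex $\iota_{A,\ast}\iota_A^{\ast}\mathcal{F}$. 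The latter contributes a factor $-\,c_1(\mathscr{L}^{\otimes d})/(1 - c_1(\mathscr{L}^{\otimes d})) = -dh/(1-dh)$, producing an expression of the form
\[
\mathrm{cc}_{\mathbb{P}}(\iota_{\ast}\iota^{\ast}\mathcal{F}) = \varepsilon \cdot R(h,d) \cdot \left(\frac{h}{1-h}\right)^r \capprod \mathrm{cc}_{\mathbb{P}}(\mathcal{F})
\]
for an appropriate sign $\varepsilon$ and rational function $R(h,d)$ in the two variables $h$ and $d$.

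To match the stated double sum, I would expand the two generating functions as formal power series in $h$: using $(h/(1-h))^r = \sum_{m \geq r}\binom{m-1}{r-1} h^m$ (as in the proof of Lemma~\ref{lemma:higher-intersection-cc}) together with a geometric series expansion of the $d$-dependent factor in powers of $h$. Multiplying the two and capping with $\mathrm{cc}(\mathcal{F})$, I would collect terms by total $h$-power. The bound $m + i \leq n(\mathcal{F})$ on the indices of summation appears because, by Lemma~\ref{lemma:range-of-cc}, $\mathrm{cc}_k(\mathcal{F}) = 0$ for $k > n(\mathcal{F})$. The index formula then converts the degree-zero cap product into $\chi(Y \cap A, \mathcal{F}|_{Y \cap A})$, yielding the claimed double sum.

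The substantive input is entirely self-contained in the characteristic class calculus already developed; what remains is combinatorial bookkeeping. The main subtleties are (i) the sign contributed by the $r+1$ generic hypersurface sections, each contributing a $-1$, and (ii) which of $\frac{1}{1-dh}$ or $\frac{dh}{1-dh}$ is the correct series to multiply against $(h/(1-h))^r$ — these differ precisely in the $i = 0$ term, so the indexing range $i = 0,\dots,n(\mathcal{F}) - r$ on the right-hand side needs to be reconciled carefully with the iterated Gysin computation. Once these two points are settled, the lemma reduces to a one-line formal-power-series identity combined with the index formula.
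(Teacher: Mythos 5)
Your proposal is correct and is exactly the paper's intended argument: the paper's entire proof is the one-line ``straightforward computation using Lemma~\ref{lemma:higher-intersection-cc}'', i.e.\ compose the characteristic-class formulas for the \(r\) general linear sections and the general degree-\(d\) section, expand the two generating functions in \(h = c_1(\mathscr{L})\), truncate using Lemma~\ref{lemma:range-of-cc}, and apply the index formula. The two subtleties you flag are genuine features of the printed statement rather than gaps in your argument: carried out literally, the computation yields an overall sign \((-1)^{r+1}\) and no \(i=0\) term (the factor \(-c_1(\mathscr{L}^{\otimes d})/(1-c_1(\mathscr{L}^{\otimes d}))\) contributes only powers \(d^{\geq 1}\)), so the displayed formula should be read up to that sign and the constant-in-\(d\) term, neither of which affects how the lemma is used later.
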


\begin{proof}
This is a straightforward computation using Lemma~\ref{lemma:higher-intersection-cc}.
\end{proof}

\begin{lemma}\label{lemma:middle-general-estimate}
Let notation and conventions be as in \ref{setup}. Then there exists a
constant \(C_4\), such that for any sufficiently general degree
\(d\) hypersurface \(Y \subset \mathbb{P}\),
\begin{enumerate}
\item for \(m \geq 0\), we have \(\dim \mathrm{H}^{m}(Y;\mathcal{P}|_Y) = \dim \mathrm{H}^{m+2}(\mathbb{P};\mathcal{P})\);
\item for \(m \leq -2\), we have
\(\dim \mathrm{H}^m(Y;\mathcal{P}|_Y) = \dim \mathrm{H}^m(\mathbb{P};\mathcal{P})\); and
\item in middle dimension, we have
\(|\dim \mathrm{H}^{-1}(Y; \mathcal{P}|_{Y}) - r(\mathcal{P}) \cdot \delta(\mathcal{P}) \cdot d^{n(\mathcal{P})}| \leq C_4 d^{n(\mathcal{P})-1}\),
\end{enumerate}
\end{lemma}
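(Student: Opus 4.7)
The plan is to handle each cohomological range by a different tool: Artin's vanishing takes care of the degrees $m \leq -2$, a Gysin/perverse weak Lefschetz argument takes care of the degrees $m \geq 0$, and the index formula (applied via Lemma~\ref{lemma:generic-euler-characteristic-estimate}) isolates the middle degree $m=-1$.

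For part (2), since $\mathcal{P}$ is perverse (hence in particular perverse coconnective on the projective variety $\mathbb{P}$) and the degree $d$ hypersurface $Y$ is ample (it is cut out by a section of the ample line bundle $\mathscr{L}^{\otimes d}$), I would simply invoke Lemma~\ref{lemma:weak-lefschetz} to conclude that the restriction $\mathrm{H}^m(\mathbb{P};\mathcal{P}) \to \mathrm{H}^m(Y;\mathcal{P}|_Y)$ is an isomorphism for all $m \leq -2$. This uses no genericity of $Y$ at all.

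For part (1), I would embed $\mathbb{P} \hookrightarrow \mathbb{P}^{N_d}$ via the complete linear system $|\mathscr{L}^{\otimes d}|$, so that degree $d$ hypersurfaces of $\mathbb{P}$ correspond to hyperplane sections under this embedding. The embedding is a closed immersion, hence quasi-finite, and $\mathcal{P}$ is in particular perverse connective, so Lemma~\ref{lemma:gysin} applies and yields, for a sufficiently general such $Y$, the Gysin isomorphism
\[
\mathrm{H}^{i-2}(Y;\mathcal{P}|_Y(-1)) \xrightarrow{\ \sim\ } \mathrm{H}^i(\mathbb{P};\mathcal{P}) \qquad (i \geq 2).
\]
Setting $m = i-2 \geq 0$ and observing that the Tate twist $(-1)$ is a one-dimensional $\Lambda$-vector space, one obtains the equality in (1).

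For part (3), I would apply Lemma~\ref{lemma:generic-euler-characteristic-estimate} to $\mathcal{F} = \mathcal{P}$, whereby
\[
\chi(Y;\mathcal{P}|_Y) = -\chi(Y;\mathcal{P}|_Y[-1]) = -r(\mathcal{P})\,\delta(\mathcal{P})\,d^{n(\mathcal{P})} + O(d^{n(\mathcal{P})-1}),
\]
with the implicit constant depending only on $\mathcal{P}$ and $\mathscr{L}$. Substituting the identifications from (1) and (2) into the expansion $\chi(Y;\mathcal{P}|_Y) = \sum_m (-1)^m \dim \mathrm{H}^m(Y;\mathcal{P}|_Y)$, every term other than $-\dim \mathrm{H}^{-1}(Y;\mathcal{P}|_Y)$ becomes a fixed Betti number of $(\mathbb{P},\mathcal{P})$, independent of $d$. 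Solving for the middle-degree term gives the estimate of (3), absorbing the finite sum of constants into the implicit $O(d^{n(\mathcal{P})-1})$.

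The only point of real care is that (1), the hypothesis of Lemma~\ref{lemma:generic-euler-characteristic-estimate}, and the Bertini-type genericity needed so that $\mathcal{P}|_Y[-1]$ is perverse can all be arranged simultaneously; but each is an open condition on $|\mathscr{L}^{\otimes d}|$, so a finite intersection of dense opens still contains a general $Y$. Apart from bookkeeping, I do not expect any serious obstacle.
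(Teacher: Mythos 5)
Your proposal is correct and follows essentially the same route as the paper: Artin vanishing (Lemma~\ref{lemma:weak-lefschetz}) for $m\leq -2$, the perverse weak Lefschetz/Gysin isomorphism (Lemma~\ref{lemma:gysin}, via re-embedding by $|\mathscr{L}^{\otimes d}|$) for $m\geq 0$, and the Euler-characteristic computation of Lemma~\ref{lemma:generic-euler-characteristic-estimate} to isolate the middle degree. The only difference is cosmetic: the paper additionally records an explicit value of $C_4$, which your argument also yields since the $O(d^{n(\mathcal{P})-1})$ coefficients come from the $d$-independent characteristic numbers $\int_{\mathbb{P}} c_1(\mathscr{L})^j\capprod\mathrm{cc}_j(\mathcal{P})$.
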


\begin{proof}
Since \(Y\) is sufficiently general, and \(\mathcal{P}\) is perverse
(hence in particular perverse \emph{connective}), the perverse weak
Lefschtz theorem, Lemma~\ref{lemma:gysin}, implies the Gysin map
\[
\mathrm{H}^{m-2}(Y;\mathcal{P}|_{Y}(-1)) \to \mathrm{H}^{m}(\mathbb{P};\mathcal{P}),
\]
is surjective for \(m = 1\), and bijective for \(m \geq 2\).  This
proves the first assertion.  The second assertion follows from the
coconnectiveness of \(\mathcal{P}\) and the usual weak Lefscehtz
theorem, Lemma~\ref{lemma:weak-lefschetz}.

The last assertion follows from
Lemma~\ref{lemma:generic-euler-characteristic-estimate} and the previous two
assertions.  Indeed, for \(Y\) general (i.e.,
\(\mathrm{SS}(\mathcal{P})\)-transverse) we have
\begin{align*}
  \chi(Y; \mathcal{P}|_{Y}[-1])
  &= \dim \mathrm{H}^{-1}(Y;\mathcal{P}|_{Y})  - \sum_{m\leq -2} (-1)^{m} \dim \mathrm{H}^{m}(\mathbb{P};\mathcal{P}) - \sum_{m\geq 0} (-1)^{m} \dim \mathrm{H}^{2+m}(\mathbb{P};\mathcal{P}) \\
  &= \dim \mathrm{H}^{-1}(Y;\mathcal{P}|_{Y}) - \chi(\mathbb{P}; \mathcal{P})  - \dim\mathrm{H}^{-1}(\mathbb{P}; \mathcal{P}) +
  \dim\mathrm{H}^{0}(\mathbb{P}; \mathcal{P}) - \dim\mathrm{H}^{1}(\mathbb{P}; \mathcal{P}).
\end{align*}
We win by applying Lemma~\ref{lemma:generic-euler-characteristic-estimate}.
In fact, we can take
\begin{equation}\label{eq:C4}
C_4 = \max\left\{ {\textstyle \int_{\mathbb{P}} c_1(\mathscr{L})^j \capprod \mathrm{cc}_j(\mathcal{P}) : j \geq 0}\right\} + \dim \mathrm{H}^{-1}(\mathbb{P};\mathcal{P}) + \dim \mathrm{H}^1(\mathbb{P};\mathcal{P})
- \dim \mathrm{H}^0(\mathbb{P};\mathcal{P}). \qedhere
\end{equation}
\end{proof}

\begin{situation}[Proof of Theorem~\ref{theorem:main}(1)]
\label{situation:proof-1}
Suppose \(Y = \{f = 0\}\) where
\(f \in \mathrm{H}^0(\mathbb{P};\mathscr{L}^{\otimes d})\).  Let \(g\)
be a general section of
\(\mathrm{H}^0(\mathbb{P};\mathscr{L}^{\otimes d})\) defining a
hypersurface \(Y_{\infty}\) such that \(Y_{\infty}\) is general in the
sense that Lemma~\ref{lemma:middle-general-estimate} holds true.
Consider the pencil of hypersurfaces \(Y_t\) defined by
\(f + tg = 0\), where \(t \in \mathbb{P}^1\).  Then for \(t\) general,
Lemma~\ref{lemma:middle-general-estimate} remains true.  By the
Proper Degeneration Lemma~\eqref{eq:proposition},
applying to the perverse sheaf \(\mathcal{P}\) (which is in particular
perverse coconnective) we conclude that
\begin{align}
  \dim \mathrm{H}^{-1}(Y;\mathcal{P}|_Y)
  &\leq  \dim \mathrm{H}^{-1}(Y_t;\mathcal{P}|_{Y_t}) +
    \dim \mathrm{H}^{-2}(Y_t;\mathcal{P}|_{Y_{t}}) + \dim \mathrm{H}^{-1}(\mathbb{P};\mathcal{P}) \nonumber
  \\
  &=  \dim \mathrm{H}^{-1}(Y_t;\mathcal{P}|_{Y_t}) +
    \dim \mathrm{H}^{-2}(\mathbb{P};\mathcal{P}) + \dim \mathrm{H}^{-1}(\mathbb{P};\mathcal{P}) \nonumber
  \\
  &= \chi(Y_t; \mathcal{P}|_{Y_t}[-1]) + \chi(\mathbb{P};\mathcal{P}) + 2\dim \mathrm{H}^{-1}(\mathbb{P};\mathcal{P}) + \dim \mathrm{H}^1(\mathbb{P};\mathcal{P}) \label{eq:explicit-1}\\
  &\quad - \dim \mathrm{H}^0(\mathbb{P};\mathcal{P}) + \dim \mathrm{H}^{-2}(\mathbb{P};\mathcal{P})\nonumber \\
  &\leq r(\mathcal{P}) \cdot \delta(\mathcal{P}) \cdot d^{n(\mathcal{P})} + C_1 d^{n(\mathcal{P})-1}
    \nonumber
\end{align}
where the second step uses
Lemma~\ref{lemma:middle-general-estimate}(2), and we can take the
constant \(C_1\) to be
\begin{equation}\label{eq:C1}
C_1 = C_4 + \dim \mathrm{H}^{-2}(\mathbb{P};\mathcal{P}) + \dim \mathrm{H}^{-1}(\mathbb{P};\mathcal{P})
\end{equation}
where \(C_4\) is from Lemma~\ref{lemma:middle-general-estimate}(3).
\qed
\end{situation}

\begin{situation}[Proof of Theorem~\ref{theorem:main}(2)]
\label{situation:proof-2}
Let us choose an auxiliary flag
\[
A_{\bullet}: \quad
\mathbb{P} = A_0 \supset A_1 \supset \cdots \supset A_i \supset \cdots
\]
of general linear subspaces in \(\mathbb{P}\) with
\(\dim A_i = \dim \mathbb{P} - i\).  Here, by a linear subspace, we
mean an intersection of divisors in the linear system
\(\mathrm{H}^0(\mathbb{P}; \mathscr{L})\).  Since the flag
\(A_{\bullet}\) is general
\(\mathcal{P}_i = \mathcal{P}|_{A_{i}}[-i]\) is a perverse sheaf on
\(A_i\) (Lemma~\ref{lemma:general-hyperplane-preserve-perv}), and
\(n(\mathcal{P}_i) = n(\mathcal{P}) - i\).  By
Theorem~\ref{theorem:main}(1), applied to \(A_{i}\) and
\(\mathcal{P}_i\), we can find a constant \(C^{(A_{\bullet})}\) independent of
\(Y\), such that for any hypersurface \(Y \subset \mathbb{P}\) of
degree \(d\), we have
\[
\dim \mathrm{H}^{-1}(Y \cap A_{i}; \mathcal{P}_i|_{Y\cap A_{i}}) \leq r(\mathcal{P}_i)
\cdot \delta(\mathcal{P}_i) \cdot d^{n(\mathcal{P}) - i} + C^{(A_{\bullet})}
d^{n(\mathcal{P}) - i - 1}.
\]

Since the hypothesis is \(\mathcal{P}|_Y[-1]\) is a perverse sheaf, it
is in particular perverse connective.  Since the flag \(A_{\bullet}\)
is chosen in a generic fashion, the perverse weak Lefschetz theorem
(Lemma~\ref{lemma:gysin}) implies that for any \(e\geq 1\), there is a
surjective Gysin map
\[
\begin{tikzcd}
\mathrm{H}^{-1}(Y\cap A_e; \mathcal{P}_e|_{Y\cap A_e})= \mathrm{H}^{-e}(Y\cap A_e; \mathcal{P}|_{Y\cap A_{e}}[-1]) \ar[r,two heads] & \mathrm{H}^{e}(Y; \mathcal{P}|_Y[-1]).
\end{tikzcd}
\]
Therefore, for each \(e\geq 1\), we have
\begin{align*}
  \dim \mathrm{H}^{-1+e}(Y;\mathcal{P}|_Y)
  &\leq r(\mathcal{P}_e) \cdot \delta(\mathcal{P}_e) \cdot d^{n(\mathcal{P}) - e} + C_1^{(A_{\bullet})} \cdot d^{n(\mathcal{P})-e-1} \\
  & \leq C^{(e)} d^{n(\mathcal{P}) - e}.
\end{align*}
Here, the constant \(C^{(e)}\) depends only on the rank and degree of
\(\mathcal{P}\) restricted to a generic codimension \(e\) linear
subspace, is independent of the choice of \(Y\).

By the traditional weak Lefschetz theorem,
Lemma~\ref{lemma:weak-lefschetz}, we have
\(\mathrm{H}^m(Y;\mathcal{P}|_Y)\cong\mathrm{H}^m(\mathbb{P};\mathcal{P})\)
if \(m \leq -2\).  Hence they will only contribute an \(O(1)\) in the
Betti sum.  Combining Theorem~\ref{theorem:main}(1) and the preceding
discussions, Theorem~\ref{theorem:main}(2) is proved.\qed
\end{situation}

\begin{situation}[Proof of Theorem~\ref{theorem:main}(3)]
\label{situation:proof-3}
Let \(Y = Y_0 \subset X\) be an arbitrary degree \(d\) hypersurface.
Then by Theorem~\ref{theorem:main}(1), we already have
\[
\dim \mathrm{H}^{-1}(Y;\mathcal{P}|_Y) \leq
r(\mathcal{P}) \cdot \delta(\mathcal{P}) \cdot d^{n(\mathcal{P})} + C_1 \cdot d^{n(\mathcal{P})-1}.
\]
As in the previous discussion, for \(e \leq -2\),
\(\dim \mathrm{H}^{-e}(Y;\mathcal{P}|_Y) = \dim \mathrm{H}^{-e}(\mathbb{P},\mathcal{P})\),
so they only contribute an \(O(1)\) in the final estimate.

So we are left to estimate \(\mathrm{H}^e(Y;\mathcal{P}|_{Y})\) for
\(e\geq 0\).  Since \(\mathcal{P}|_Y[-1]\) may not be perverse
connective, the argument in \ref{situation:proof-2} breaks down as we
cannot apply the perverse weak Lefschetz theorem to
\(\mathcal{P}|_Y[-1]\).  Therefore we need a to apply the weaker, but
unconditional degeneration inequality \eqref{eq:estimate-ineq}.
Let us first deal with the case where \(e = 0\).
\end{situation}

\begin{lemma}\label{lemma:bad-factor}
Notation and conventions be as in \ref{setup}.  Suppose
\(\mathcal{P}\) is a perverse sheaf on \(\mathbb{P}\).  Then for any
degree \(d\) hypersurface \(Y\subset \mathbb{P}\), there is a constant
\(C_0\) independent of \(Y\), such that
\begin{align*}
  \mathrm{H}^0(Y;\mathcal{P}|_Y)
  &\leq 2 \cdot r(\mathcal{P}) \cdot \delta(\mathcal{P}) \cdot d^{n(\mathcal{P})}
    + C_0 \cdot d^{n(\mathcal{P}) -1}.
\end{align*}
\end{lemma}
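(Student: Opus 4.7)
The plan is to apply the second (simpler) inequality in the Proper Degeneration Lemma~\ref{lemma:proper-degeneration} at $m = 0$, degenerating the arbitrary hypersurface $Y$ to a sufficiently generic one. Concretely, I take $Y_0 = Y$, choose $Y_\infty$ to be a sufficiently generic degree $d$ hypersurface so that Lemma~\ref{lemma:middle-general-estimate} applies, and, after a suitable linear change of coordinates on $\mathbb{P}^1$, arrange that a generic $Y_t$ in the pencil inherits the same genericity properties. With this setup, the inequality~\eqref{eq:estimate-ineq} specializes at $m = 0$ to
\[
\dim \mathrm{H}^{0}(Y; \mathcal{P}|_Y) \leq \dim \mathrm{H}^{0}(Y_t; \mathcal{P}|_{Y_t}) + 2 \dim \mathrm{H}^{-1}(Y_t; \mathcal{P}|_{Y_t}) + \dim \mathrm{H}^{0}(\mathbb{P}; \mathcal{P}) + \dim \mathrm{H}^{-2}(B; \mathcal{P}|_B),
\]
where $B = Y \cap Y_\infty$.

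Three of the four terms are handled directly by Lemma~\ref{lemma:middle-general-estimate}: part~(1) identifies $\dim \mathrm{H}^{0}(Y_t; \mathcal{P}|_{Y_t})$ with $\dim \mathrm{H}^{2}(\mathbb{P}; \mathcal{P})$, which is constant in $d$; part~(3) yields $\dim \mathrm{H}^{-1}(Y_t; \mathcal{P}|_{Y_t}) \leq r(\mathcal{P}) \delta(\mathcal{P}) d^{n(\mathcal{P})} + C_4 \cdot d^{n(\mathcal{P})-1}$; and the term $\dim \mathrm{H}^{0}(\mathbb{P}; \mathcal{P})$ is independent of $Y$. Multiplying the middle estimate by $2$ produces exactly the leading term $2\cdot r(\mathcal{P}) \delta(\mathcal{P}) d^{n(\mathcal{P})}$ of the target bound.

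The main step is to show that $\dim \mathrm{H}^{-2}(B; \mathcal{P}|_B)$ is already $O(1)$, and for this I invoke the classical weak Lefschetz theorem inside $Y_t$. By Lemma~\ref{lemma:estimate-perv-dergee}, $\mathcal{P}|_{Y_t}[-1]$ is perverse coconnective on the (possibly singular) projective variety $Y_t$, so $\mathcal{P}|_{Y_t}$ itself lies in ${}^{\mathrm{p}}D^{\geq 1}_c(Y_t, \Lambda) \subset {}^{\mathrm{p}}D^{\geq 0}_c(Y_t, \Lambda)$. Moreover, $B$ is cut out inside $Y_t$ by the section $\sigma_0|_{Y_t}$ of the ample line bundle $(\mathscr{L}^{\otimes d})|_{Y_t}$, hence is an ample hypersurface in $Y_t$. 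Lemma~\ref{lemma:weak-lefschetz}, applied with ambient $Y_t$, perverse coconnective sheaf $\mathcal{P}|_{Y_t}$, and ample hypersurface $B$, therefore gives a bijection $\mathrm{H}^{-2}(Y_t; \mathcal{P}|_{Y_t}) \xrightarrow{\sim} \mathrm{H}^{-2}(B; \mathcal{P}|_B)$, and combining with Lemma~\ref{lemma:middle-general-estimate}(2) forces $\dim \mathrm{H}^{-2}(B; \mathcal{P}|_B) = \dim \mathrm{H}^{-2}(\mathbb{P}; \mathcal{P}) = O(1)$. Assembling the four estimates then yields the claim with an explicit constant $C_0$.

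The main obstacle is precisely this last bound. A more naive approach would view $B$ as a degree $d$ hypersurface inside the generic $Y_\infty$ and invoke Theorem~\ref{theorem:main}(1) for the perverse sheaf $\mathcal{P}|_{Y_\infty}[-1]$; but since the $\delta$-invariant of $\mathcal{P}|_{Y_\infty}[-1]$ on $Y_\infty$ scales as $d \cdot \delta(\mathcal{P})$, this only produces a bound of order $d^{n(\mathcal{P})}$ on $\mathrm{H}^{-2}(B; \mathcal{P}|_B)$ and would give a factor $3$ rather than $2$ in the statement. The improvement comes from performing the Lefschetz step inside $Y_t$ instead of inside $Y_\infty$: the sheaf $\mathcal{P}|_{Y_t}$ sits even deeper in the coconnective part (it belongs to ${}^{\mathrm{p}}D^{\geq 1}_c$, not merely ${}^{\mathrm{p}}D^{\geq 0}_c$), so the ordinary weak Lefschetz theorem already provides a bijection, rather than just an injection, at $m = -2$.
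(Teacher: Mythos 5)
Your overall architecture (degenerate $Y$ to a general member $Y_t$ of a pencil via \eqref{eq:estimate-ineq}, bound the $Y_t$-terms by Lemma~\ref{lemma:middle-general-estimate}) matches the paper, but the step where you dispose of $\dim \mathrm{H}^{-2}(B;\mathcal{P}|_B)$ is wrong, and this is precisely the term that decides whether the coefficient is $2$ or $3$. The shift bookkeeping is inverted: if $\mathcal{P}|_{Y_t}[-1]$ is perverse, then $\mathcal{P}|_{Y_t} = (\mathcal{P}|_{Y_t}[-1])[1]$ lies in ${}^{\mathrm{p}}D^{=-1}_c(Y_t,\Lambda)$, not in ${}^{\mathrm{p}}D^{\geq 1}_c$; the shift $[1]$ \emph{lowers} perverse degree. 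Applying Lemma~\ref{lemma:weak-lefschetz} to the coconnective object $\mathcal{P}|_{Y_t}[-1]$ and the ample hypersurface $B \subset Y_t$ gives bijectivity of $\mathrm{H}^m(Y_t;\mathcal{P}|_{Y_t}) \to \mathrm{H}^m(B;\mathcal{P}|_B)$ only for $m \leq -3$ and mere \emph{injectivity} at $m=-2$ --- the wrong direction for an upper bound on the target group. And indeed the conclusion you want is false: take $\mathcal{P} = \Lambda_{\mathbb{P}^N}[N]$ and $Y_0, Y_\infty$ two general degree-$d$ hypersurfaces; then $B$ is a smooth codimension-two complete intersection and $\mathrm{H}^{-2}(B;\mathcal{P}|_B) = \mathrm{H}^{N-2}(B;\Lambda)$ is its middle cohomology, which grows like $d^N$, not $O(1)$. (For $N=3$ this is $\mathrm{H}^1$ of a complete intersection curve of genus $\sim d^3$.) So with a general degree-$d$ choice of $Y_\infty$ you cannot do better than the naive $O(d^{n})$ bound you yourself flagged, and you land back at the factor $3$.

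The paper's fix is a different choice of the degenerating pencil: take $g \in \mathrm{H}^0(\mathbb{P};\mathscr{L})$ a \emph{fixed general linear form}, set $L = \{g=0\}$, and use the pencil $Y_t = \{f + t\,g^{d} = 0\}$, so that $Y_\infty$ is (set-theoretically) the hyperplane $L$ and the base locus is $B = Y \cap L$. Then $B$ is a degree-$d$ hypersurface inside the fixed $L$, where $\mathcal{P}|_L[-1]$ is perverse with support of dimension $n(\mathcal{P})-1$, so Theorem~\ref{theorem:main}(1) one dimension down bounds $\dim\mathrm{H}^{-2}(B;\mathcal{P}|_B)$ by $O(d^{n(\mathcal{P})-1})$. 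Transversality of $Y_\infty$ still forces $Y_t$ to be $\mathrm{SS}(\mathcal{P})$-transverse for general $t$, so the rest of your estimate goes through verbatim with this modified pencil.
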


\begin{proof}
Let \(g \in \mathrm{H}^0(\mathbb{P};\mathscr{L})\) be a general linear
form.  Let \(L = \{g = 0\}\), so that \(L\) is
\(\operatorname{SS}(\mathcal{P})\)-transverse.  Let
\(f \in \mathrm{H}^0(\mathbb{P};\mathscr{L}^{\otimes d})\) be the
defining section of \(Y\).  Without loss of generality, we may assume
that \(Y\) is reduced; otherwise, we reduce to a lower degree case.

Consider the pencil of hypersurfaces \(Y_t\) defined by
\(\{f + t g^d = 0\}\).  Since \(Y\) is reduced, \(Y_t\) is also
reduced for general \(t\).  Because \(Y_{\infty} = \{g = 0\}\) is
\(\operatorname{SS}(\mathcal{P})\)-transverse, \(Y_t\) is
\(\mathrm{SS}(\mathcal{P})\)-transverse for general \(t\).
In particular,
\(\mathcal{P}|_{Y_t}[-1]\) is perverse for general \(t\).
(This can also be shown directly, see \cite[Lemma~III~6.3]{kiehl-weissauer_weil-conjectures-perverse-sheaves}.)
By the Projective Degeneration Lemma~\eqref{eq:estimate-ineq}, we
have, for general \(t\),
\begin{align*}
  \dim \mathrm{H}^1(Y; \mathcal{P}[-1])
  &\leq \dim \mathrm{H}^1(Y_t; \mathcal{P}[-1])
    + 2 \dim \mathrm{H}^0(Y_t; \mathcal{P}[-1])
    + \dim \mathrm{H}^0(\mathbb{P}; \mathcal{P})\\
    &\quad + \dim \mathrm{H}^0(B; \mathcal{P}[-2]|_B) \\
  &\leq 2 \cdot r(\mathcal{P}) \cdot \delta(\mathcal{P}) \cdot d^{n(\mathcal{P})}
    + 2C_2 \cdot d^{n(\mathcal{P})-1}\\
  &\quad + \dim \mathrm{H}^{0}(\mathbb{P}; \mathcal{P})
    + \dim \mathrm{H}^0(B; \mathcal{P}[-2]|_B).
\end{align*}
Here \(B = Y_0 \cap Y_{\infty} = Y \cap L\), and the second inequality
follows from Theorem~\ref{theorem:main}(2).  Since \(B\) is a degree \(d\)
hypersurface in \(L = \{g = 0\}\), we may use Theorem~\ref{theorem:main}(1)
for \(L\) and \(\mathcal{P}|_{L}[-1]\).  Note that \(L\) is independent of
\(d\) and is chosen generically so that \(\mathcal{P}|_L[-1]\) has support
of one lower dimension.  Therefore, this part will contribute an
\(O(d^{n-1})\) term in the formula, as required.
\end{proof}

\begin{proof}[End of Proof of Theorem~\ref{theorem:main}(3)]
We now use a similar strategy as in \ref{situation:proof-2} to treat
\(\mathrm{H}^e(Y;\mathcal{P}|_Y)\) for \(e \geq 1\).  By
Lemma~\ref{lemma:estimate-perv-dergee}, although
\(\mathcal{P}|_{Y}[-1]\) may no longer be connective,
\(\mathcal{P}|_Y\) remains
connective.  Therefore, the perverse weak Lefschetz theorem still
applies to \(\mathcal{P}|_Y\) (although not to \(\mathcal{P}|_Y[-1]\)).  As in
\ref{situation:proof-2}, we choose a general flag of linear subspaces
\[
\mathbb{P} = A_0 \supset A_1 \supset \cdots \supset A_i \supset \cdots
\]
with \(\dim A_i = \dim \mathbb{P} - i\).  By Lemma~\ref{lemma:gysin},
the perverse weak Lefschetz theorem gives us surjective Gysin maps
\[
\mathrm{H}^1(Y \cap A_e; \mathcal{P}_e|_{Y \cap A_e}[-1]) =
\mathrm{H}^{-e}(Y \cap A_e; \mathcal{P}|_{Y \cap A_e})
\to \mathrm{H}^e(Y; \mathcal{P}|_Y),
\]
where \(\mathcal{P}_e = \mathcal{P}|_{A_e}[-e]\) is a perverse sheaf
on \(A_e\).  For \(e \geq 1\), we may apply
Lemma~\ref{lemma:bad-factor} to \(A_e\) and \(\mathcal{P}_e\), and
find a constant \(C^{(A)}_0\) such that
\begin{align*}
  \dim \mathrm{H}^{1+e}(Y;\mathcal{P}[-1])
  &\leq \dim \mathrm{H}^{1}(Y\cap A_e;
    \mathcal{P}_e|_{Y\cap A_e}[-1]) \\
  &\leq 2 r(\mathcal{P}) \cdot \delta(\mathcal{P})\cdot d^{n(\mathcal{P})-e}
    + C^{(A)}_0 \cdot d^{n(\mathcal{P})-e-1}.
\end{align*}
Therefore, for \(e \geq 1\),
\(\sum_{e \geq 1} \dim \mathrm{H}^{1+e}(Y; \mathcal{P}|_Y[-1])\) is
bounded by \(O(d^{n(\mathcal{P})-1})\).  In summary,
\begin{align*}
B(Y, \mathcal{P}|_Y)
&= \underbrace{O(1)}_{\mathrm{H}^{\leq-1}(Y;\mathcal{P}|_Y[-1])}
+ \dim \mathrm{H}^0(Y; \mathcal{P}[-1])
+ \dim \mathrm{H}^1(Y; \mathcal{P}[-1]) +
\underbrace{O(d^{n(\mathcal{P})-1})}_{\mathrm{H}^{\geq2}(Y;\mathcal{P}|_Y[-1])} \\
&= 3 r(\mathcal{P}) \delta(\mathcal{P}) d^{n(\mathcal{P})}
+ O(d^{n(\mathcal{P})-1}).
\end{align*}
This completes the proof of the theorem.
\end{proof}

For possible future use, and to prepare for proving the uniformity of
these bounds as \(\ell\) varies, we record here some details regarding
the bounds obtained in Theorem~\ref{theorem:main}.

\begin{situation}[Explication of Theorem~\ref{theorem:main}(1)]
\label{situation:explication-1}
Let \(Y\) be an arbitrary degree \(d\) hypersurface in \(\mathbb{P}\).
By \eqref{eq:C4} and \eqref{eq:explicit-1}, we have
\[
\dim \mathrm{H}^{0}(Y;\mathcal{P}|_{Y}[-1])
\leq \sum_{j=0}^{n(\mathcal{P})} d^{j} \cdot \int_{\mathbb{P}}
c_{1}(\mathscr{L})^{j} \capprod \mathrm{cc}_{j}(\mathcal{P})
+ 2 B(\mathbb{P},\mathcal{P}).
\]
\end{situation}

\begin{situation}[Explication of Theorem~\ref{theorem:main}(2)]
\label{situation:explication-2}
For convenience, we write \(n = n(\mathcal{P})\).  In the space of all
complete flags of linear subspaces in \(\mathbb{P}\), we choose a
general flag \(A_{\bullet}\).  If \(\mathcal{P}|_Y[-1]\) is a perverse
sheaf,  so is \(\mathcal{P}_r|_{Y \cap A_r}[-1]\),
where \(\mathcal{P}_r \coloneq \mathcal{P}|_{A_r}[-r]\).
Then for any \(r \geq 1\), the argument in
\ref{situation:proof-2}, together with
Lemma~\ref{lemma:higher-intersection-euler} and
\ref{situation:explication-1}, shows that
\begin{align*}
  \label{eq:explicit-bound-perverse}
  \dim \mathrm{H}^r(Y; \mathcal{P}|_Y[-1])
  & \leq \dim \mathrm{H}^0(Y \cap A_r; \mathcal{P}_r|_{Y \cap A_r}[-1]) \\
  & \leq \sum_{i=0}^{n-r} d^i
    \sum_{m=1}^{n-i} \binom{m-1}{r-1}
    \int_{\mathbb{P}}
      c_1(\mathscr{L})^{m+i} \capprod \mathrm{cc}_{i+m}(\mathcal{P}) \\
  &\quad + 2 B(A_{r}, \mathcal{P}|_{A_{r}}).
\end{align*}
\end{situation}

\begin{situation}[Explication of Theorem~\ref{theorem:main}(3)]
\label{situation:explication-3}
We continue to select a general flag \(A_{\bullet}\) as in
\ref{situation:explication-2}.  Now assume that
\(\mathcal{P}|_Y[-1]\) is not a perverse sheaf.  We retain the bound
for \(\mathrm{H}^{0}(Y;\mathcal{P}|_{Y}[-1])\) given in
\ref{situation:explication-1}.

Recall from Lemma~\ref{lemma:bad-factor} that \(Y_{\infty}\) is a
general hyperplane.  Accordingly, \(B\) is a degree \(d\) hypersurface
in a general hyperplane.  Applying the result of
\ref{situation:explication-2}, we obtain
\[
\mathrm{H}^0(B; \mathcal{P}|_{B}[-2]) \leq
\sum_{j=0}^{n-1} d^j \sum_{m=1}^{n-j}
\int_{\mathbb{P}} c_1(\mathscr{L})^{m+j} \capprod
\mathrm{cc}_{j+m}(\mathcal{P}) + 2 B(A_{1}, \mathcal{P}|_{A_{1}}).
\]
This is the same as the bound for
\(\mathrm{H}^1(Y_t; \mathcal{P}_t|_{Y_t}[-1])\) for general \(t\),
where \(\mathcal{P}|_{Y_t}[-1]\) is perverse, as given in
\ref{situation:explication-2}.  Therefore,
\begin{align*}
  \dim \mathrm{H}^{1}(Y; \mathcal{P}|_{Y}[-1])
  &\leq \dim \mathrm{H}^{1}(Y_t) + 2 \dim \mathrm{H}^0(Y_t; \mathcal{P}_t|_{Y_t}[-1]) \\
  &\quad + \dim \mathrm{H}^0(B; \mathcal{P}|_{B}[-2]) + \dim \mathrm{H}^0(\mathbb{P}; \mathcal{P}) \\
  &\leq 2 \sum_{i=0}^{n-1} d^i \sum_{m=1}^{n-i}
    \int_{\mathbb{P}} c_1(\mathscr{L})^{m+i} \capprod \mathrm{cc}_{i+m}(\mathcal{P})
   + 4 B(A_{1}, \mathcal{P}|_{A_{1}}) \\
  &\quad + 2 \sum_{j=0}^{n} d^{j}
    \int_{\mathbb{P}} c_1(\mathscr{L})^{j} \capprod \mathrm{cc}_{j}(\mathcal{P})
   + 3 B(\mathbb{P},\mathcal{P}).
\end{align*}

For \(r \geq 2\), we can invoke the weak Lefschetz theorem and the
preceding results, obtaining
\begin{align*}
  \dim \mathrm{H}^{r}(Y; \mathcal{P}|_Y[-1])
  &\leq \dim \mathrm{H}^1(Y \cap A_r;
    \mathcal{P}_{r-1}|_{Y \cap A_{r-1}}[-1]) \\
  &\leq 2 \sum_{j=0}^{n-r-1} d^j \sum_{m=1}^{n-i} \binom{m-1}{r-1}
    \int_{\mathbb{P}} c_1(\mathscr{L})^{m+j} \capprod
    \mathrm{cc}_{j+m}(\mathcal{P}) \\
  &\quad + 4 B(A_{r+1}, \mathcal{P}|_{A_{r+1}}) \\
  &\quad + 2 \sum_{j=0}^{n-r} d^j \sum_{m=1}^{n-i} \binom{m-1}{r-2}
    \int_{\mathbb{P}} c_1(\mathscr{L})^{m+j} \capprod
    \mathrm{cc}_{j+m}(\mathcal{P}) \\
  &\quad + 3 B(A_r, \mathcal{P}|_{A_r}).
\end{align*}
\end{situation}

\begin{corollary}\label{corollary:bound-any}
Let \(\mathcal{F}\) be an arbitrary constructible complex on
\(\mathbb{P}\).  Set
\begin{itemize}
\item \(r = \sum_{e\in \mathbb{Z}} r({}^{\mathrm{p}}\mathcal{H}^{e}\mathcal{F})\),
\item
\(\delta = \sum_{e\in \mathbb{Z}}\deg\operatorname{Supp}({}^{\mathrm{p}}\mathcal{H}^{e}(\mathcal{F}))\),
and
\item \(n = \dim \operatorname{Supp}(\mathcal{F})\).
\end{itemize}
Then there exists \(C > 0\) such that for any degree \(d\)
hypersurface in \(\mathbb{P}\), we have
\[
B(Y,\mathcal{F}) \leq 3r\delta d^{n} + C d^{n-1}
\]
\end{corollary}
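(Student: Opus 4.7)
My plan is to reduce the corollary to the perverse case treated in Theorem~\ref{theorem:main}(3) via the standard dévissage encoded by Lemma~\ref{lemma:trivial}. Writing $\mathcal{P}^e \coloneq {}^{\mathrm{p}}\mathcal{H}^e(\mathcal{F})$, the lemma immediately gives
\[
B(Y,\mathcal{F}|_Y) \leq \sum_{e \in \mathbb{Z}} B(Y, \mathcal{P}^e|_Y),
\]
where only finitely many summands are nonzero. So the entire problem reduces to a uniform bound for each perverse cohomology sheaf.

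Next I would apply Theorem~\ref{theorem:main}(3) to each $\mathcal{P}^e$: for some constants $C_e > 0$ independent of $Y$,
\[
B(Y, \mathcal{P}^e|_Y) \leq 3\, r(\mathcal{P}^e)\, \delta(\mathcal{P}^e)\, d^{n(\mathcal{P}^e)} + C_e\, d^{n(\mathcal{P}^e)-1}.
\]
Since $\operatorname{Supp}(\mathcal{P}^e) \subset \operatorname{Supp}(\mathcal{F})$, we have $n(\mathcal{P}^e) \leq n$. All indices $e$ with $n(\mathcal{P}^e) < n$ therefore contribute only $O(d^{n-1})$ to the total and get absorbed into the error term.

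To control the leading part, I would use the non-negativity of $r(\mathcal{P}^e)$ and $\delta(\mathcal{P}^e)$ (both are positive whenever $\mathcal{P}^e \neq 0$ by the definition in \ref{setup} and the remark that $r(\mathcal{P}) \geq 1$ for a nonzero perverse sheaf). Since $\delta = \sum_e \deg \operatorname{Supp}(\mathcal{P}^e)$ and $r = \sum_e r(\mathcal{P}^e)$, every top-dimensional contribution satisfies $\delta(\mathcal{P}^e) \leq \delta$, hence
\[
\sum_{e:\, n(\mathcal{P}^e) = n} r(\mathcal{P}^e) \delta(\mathcal{P}^e)
\leq \delta \sum_{e} r(\mathcal{P}^e) = r\delta.
\]
Combining these estimates yields $B(Y,\mathcal{F}|_Y) \leq 3 r \delta d^{n} + C d^{n-1}$ with $C \coloneq \sum_e C_e$, which is a finite sum and independent of $Y$.

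This argument is essentially routine once Theorem~\ref{theorem:main}(3) is in hand; there is no serious obstacle. The only subtlety worth checking is the convention on $\deg \operatorname{Supp}(\mathcal{P}^e)$: it must be interpreted as the degree of the union of top-dimensional irreducible components, matching the definition of $\delta(\mathcal{P}^e)$ in \ref{setup}. With that convention the inequality $\delta(\mathcal{P}^e) \leq \delta$ in the final step is immediate, and the proof is complete.
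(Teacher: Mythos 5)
Your proposal is correct and follows the same route as the paper: the paper's proof is exactly the one-line combination of Lemma~\ref{lemma:trivial} with Theorem~\ref{theorem:main}(3) applied to each \({}^{\mathrm{p}}\mathcal{H}^{e}(\mathcal{F})\), using \(n({}^{\mathrm{p}}\mathcal{H}^{e}(\mathcal{F}))\leq n\). Your extra bookkeeping for the leading coefficient (bounding \(\sum_{e}r(\mathcal{P}^{e})\delta(\mathcal{P}^{e})\) by \(r\delta\) via non-negativity) is a valid elaboration of what the paper leaves implicit.
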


\begin{proof}
Apply Theorem~\ref{theorem:main}(3) and Lemma~\ref{lemma:trivial} to
\({}^{\mathrm{p}}\mathcal{H}^{e}(\mathcal{F})\) for each \(e\), noting
that \(n({}^{\mathrm{p}}\mathcal{H}^{e}(\mathcal{F})) \leq n\).
\end{proof}

\section{Betti number estimates}
\label{sec:applications}

In this section, we use Theorem~\ref{theorem:main} to deduce the
asymptotic of total Betti numbers of a degree \(d\) hypersurface in
any projective variety.

\begin{situation}[Setup]
Throughout this section, let \(X\) be a possibly singular projective
variety.  Let \(\mathscr{L}\) be a very ample invertible sheaf on \(X\).
We shall simply refer to the zero locus of a nonzero section of
\(\mathrm{H}^0(X;\mathscr{L}^{\otimes d})\) as a degree \(d\)
hypersurface.
\end{situation}

Our goal is to prove two theorems regarding the asymptotic properties
of the total Betti numbers of hypersurfaces in \(X\).  The first one
is surprisingly general, regardless how singular the ambient variety
\(X\) is.

\begin{theorem}\label{theorem:arbitrary}
Suppose \(\dim X = n\), and suppose \(\mathcal{F}\) is
a plain constructible sheaf on \(X\).  Then there exist a constant \(C> 0\) such
that for any degree \(d\) hypersurface \(Y \subset X\) (\(C\) is
independent of \(Y\)), we have
\[
B(Y;\mathcal{F}|_{Y}) \leq 3 \operatorname{rank}^{\circ}(\mathcal{F}) \cdot \deg(X) \cdot d^n + C \cdot d^{n-1}
\]
(for the definition of the generic rank
\(\operatorname{rank}^{\circ}\), see
Definition~\ref{definition:vir-gen-rank}).
\end{theorem}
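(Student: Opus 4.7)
The plan is to reduce the bound on a singular \(X\) to a bound on the smooth ambient \(\mathbb{P}^N\), where Theorem~\ref{theorem:main}(3) is available.  Using the very ample \(\mathscr{L}\) I embed \(i\colon X \hookrightarrow \mathbb{P}^N\) so that \(\mathscr{L} = i^{\ast}\mathcal{O}_{\mathbb{P}^N}(1)\) and \(\deg(X)\) is the projective degree.  Set \(\mathcal{G} = i_{\ast}\mathcal{F}\) on \(\mathbb{P}^N\); this is a constructible sheaf with \(n\)-dimensional support.  Proper base change for the closed immersion \(i\) gives, for any hypersurface \(H \subset \mathbb{P}^N\) with \(H\cap X = Y\) scheme-theoretically, the identity \(\mathcal{G}|_{H} \cong (i_{Y})_{\ast}(\mathcal{F}|_{Y})\), so that \(B(Y, \mathcal{F}|_{Y}) = B(H, \mathcal{G}|_{H})\).

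The key input is the following analysis of the perverse cohomology of \(\mathcal{G}\).  Since \(i_{\ast}\) is \(t\)-exact for the middle perverse \(t\)-structure and \(\mathcal{F}[n]\in{}^{\mathrm{p}}D^{\leq0}_{c}(X,\Lambda)\), the shift \(\mathcal{G}[n]\) is perverse connective, so \({}^{\mathrm{p}}\mathcal{H}^{e}(\mathcal{G}) = 0\) for \(e>n\).  On the open \(\mathbb{P}^N \setminus X_{\mathrm{sing}}\) the subvariety \(X^{\mathrm{sm}}\) is smooth of pure dimension \(n\) and \(\mathcal{F}|_{X^{\mathrm{sm}}}[n]\) is already perverse; \(t\)-exactness of the pushforward along \(X^{\mathrm{sm}} \hookrightarrow \mathbb{P}^N\setminus X_{\mathrm{sing}}\) then gives that \({}^{\mathrm{p}}\mathcal{H}^{e}(\mathcal{G})\) vanishes on that open for \(e \neq n\).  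Consequently each \({}^{\mathrm{p}}\mathcal{H}^{e}(\mathcal{G})\) with \(e<n\) is supported in \(X_{\mathrm{sing}}\) and has \(n({}^{\mathrm{p}}\mathcal{H}^{e}\mathcal{G}) < n\), whereas \({}^{\mathrm{p}}\mathcal{H}^{n}(\mathcal{G})\) generically restricts to \(\mathcal{F}|_{X^{\mathrm{sm}}}[n]\); directly from the definitions of \(r,\delta\) (together with Lemma~\ref{lemma:range-of-cc}) one obtains \(r({}^{\mathrm{p}}\mathcal{H}^{n}\mathcal{G}) = \operatorname{rank}^{\circ}(\mathcal{F})\) and \(\delta({}^{\mathrm{p}}\mathcal{H}^{n}\mathcal{G}) = \deg(X)\).

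For \(d\) above a regularity threshold \(d_{0}\) --- large enough that \(H^{0}(\mathbb{P}^{N},\mathcal{O}(d)) \to H^{0}(X,\mathscr{L}^{\otimes d})\) is surjective by Serre vanishing for the ideal sheaf \(\mathscr{I}_{X}\) --- the defining section of \(Y\) lifts to some \(\tilde{s}\) on \(\mathbb{P}^{N}\), and \(H \coloneq V(\tilde{s})\) is a degree \(d\) hypersurface in \(\mathbb{P}^{N}\) with \(H \cap X = Y\).  Combining Lemma~\ref{lemma:trivial} applied to \(\mathcal{G}\) restricted to \(H\) with Theorem~\ref{theorem:main}(3) applied on \(\mathbb{P}^{N}\) to each perverse sheaf \({}^{\mathrm{p}}\mathcal{H}^{e}(\mathcal{G})\) and the hypersurface \(H\), only the summand \(e = n\) contributes at order \(d^{n}\), yielding the desired leading coefficient \(3\operatorname{rank}^{\circ}(\mathcal{F})\cdot\deg(X)\); each \(e < n\) summand has \(n_{e} < n\), so its bound from Theorem~\ref{theorem:main}(3) is \(O(d^{n-1})\).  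The finitely many \(d < d_{0}\) are handled by constructibility of \(R\pi_{\ast}\) along the proper universal-hypersurface family over \(\mathbb{P}(H^{0}(X, \mathscr{L}^{\otimes d}))\), which forces \(B(Y, \mathcal{F}|_{Y})\) to take only finitely many values for such \(d\); enlarging \(C\) absorbs them.

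The main technical obstacle is the scheme-theoretic equality \(H \cap X = Y\), i.e.\ extending \(s \in H^{0}(X,\mathscr{L}^{\otimes d})\) to \(\mathbb{P}^{N}\), which is what forces the bifurcation into \(d \geq d_{0}\) and \(d < d_{0}\).  A subsidiary point requiring care is that the top perverse cohomology sheaf contributes \emph{exactly} \(\operatorname{rank}^{\circ}(\mathcal{F})\cdot \deg(X)\) to the leading coefficient, not a larger sum over all nonzero \({}^{\mathrm{p}}\mathcal{H}^{e}(\mathcal{G})\); this is why I apply Theorem~\ref{theorem:main}(3) and Lemma~\ref{lemma:trivial} to the individual perverse cohomology sheaves rather than invoking Corollary~\ref{corollary:bound-any} directly, whose product \(r\delta\) would overcount and inflate the constant beyond the asserted \(3\operatorname{rank}^{\circ}(\mathcal{F})\cdot\deg(X)\).
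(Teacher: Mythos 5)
Your route is genuinely different from the paper's. The paper never takes perverse cohomology of \(i_{\ast}\mathcal{F}\): it instead chooses an open \(j\colon X^{\circ}\hookrightarrow X\) that is smooth, dense in the \(n\)-dimensional part, over which \(\mathcal{F}\) is lisse, and whose complement \(D\) is a Cartier divisor; then \(j_{!}j^{\ast}\mathcal{F}[n]\) is honestly perverse (affine quasi-finite \(j_{!}\) is \(t\)-exact), Theorem~\ref{theorem:main}(3) applies to it directly, and the triangle \(j_{!}j^{\ast}\mathcal{F}[n]\to\mathcal{F}[n]\to i_{\ast}i^{\ast}\mathcal{F}[n]\) reduces the rest to \(D\) by induction on dimension. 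Your dévissage through \(\bigl({}^{\mathrm{p}}\mathcal{H}^{e}(i_{\ast}\mathcal{F})\bigr)_{e}\) via Lemma~\ref{lemma:trivial} is a legitimate alternative, and your observation that one must apply Theorem~\ref{theorem:main}(3) to the individual perverse constituents rather than quote Corollary~\ref{corollary:bound-any} (whose product \(r\delta\) of summed invariants overcounts) is exactly right. Your explicit treatment of lifting the defining section from \(X\) to the ambient projective space (Serre vanishing for \(d\geq d_{0}\), constructibility over the parameter space for the finitely many small \(d\)) is in fact more careful than the paper, which simply asserts the existence of \(H\) with \(Y=X\cap H\). Note, though, that your induction-free route gives up the recursive structure the paper uses again in \S6 to propagate \(\ell\)-independence.

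There is one incorrect step, though its consequences are repairable. You claim that \(\mathcal{F}|_{X^{\mathrm{sm}}}[n]\) is perverse and hence that \({}^{\mathrm{p}}\mathcal{H}^{e}(\mathcal{G})\) vanishes on \(\mathbb{P}^{N}\setminus X_{\mathrm{sing}}\) for \(e\neq n\). This is false for a general constructible sheaf: smoothness of the support is not enough. For example, if \(X=\mathbb{P}^{1}\) and \(\mathcal{F}\) is a skyscraper at a smooth point \(p\), then \(\mathcal{F}[1]\) is not perverse and \({}^{\mathrm{p}}\mathcal{H}^{0}(\mathcal{G})\neq 0\) is supported at \(p\in X^{\mathrm{sm}}\). (Also, \(X^{\mathrm{sm}}\) need not be of pure dimension \(n\) when \(X\) has lower-dimensional components.) The correct open set to use is the one the paper uses: a smooth, purely \(n\)-dimensional dense open \(U\) of the \(n\)-dimensional part of \(X\) \emph{over which \(\mathcal{F}\) is lisse}. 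On the corresponding open \(\mathbb{P}^{N}\setminus(X\setminus U)\) one has \(\mathcal{G}=(i_{U})_{\ast}(\mathcal{F}|_{U})\) with \(\mathcal{F}|_{U}[n]\) perverse, so the lower perverse cohomology sheaves are supported on \(X\setminus U\), which has dimension \(\leq n-1\). With this substitution your downstream conclusions --- \(n({}^{\mathrm{p}}\mathcal{H}^{e}\mathcal{G})<n\) for \(e<n\), and \(r({}^{\mathrm{p}}\mathcal{H}^{n}\mathcal{G})\cdot\delta({}^{\mathrm{p}}\mathcal{H}^{n}\mathcal{G})=\operatorname{rank}^{\circ}(\mathcal{F})\cdot\deg(X)\) --- hold, and the rest of your argument goes through.
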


\begin{proof}
We proceed by an induction on the dimension of \(X\).  When the
dimension is zero, there is nothing to prove.  Suppose for any polarized variety
\((D,\mathscr{L})\) of dimension \(\leq n-1\), and any constructible sheaf \(\mathcal{F}\)
on \(D\), we have found a constant \(C_{\mathscr{L}}(D,\mathcal{F})\) which makes
the inequality holds.

Let
\[
\iota\colon X \to \mathbb{P} = \operatorname{Proj}\mathrm{Sym}^{\bullet}\mathrm{H}^0(X;\mathscr{L})^{\vee}
\]
be the projective embedding induced by \(\mathscr{L}\).  Let
\(j\colon X^{\circ} \to X\) be an open immersion of satisfying the
following properties:
\begin{itemize}
\item \(X^{\circ}\) is smooth,
\item \(X^{\circ}\) contains are the generic points of the
\(n\)-dimensional irreducible components of \(X\),
\item \(\mathcal{F}|_{X^{\circ}}\) is lisse of rank equal to
\(\mathrm{rank}^{\circ}(\mathcal{F})\).
\end{itemize}
By shrinking \(X^{\circ}\), we may further assume that
\begin{itemize}
\item \(X \setminus X^{\circ}\) is a Cartier divisor.
\end{itemize}
Let \(i \colon D \to X\) be the inclusion of
the complement \(D = X \setminus X^{\circ}\) of \(X^{\circ}\).  Then
we have a distinguished triangle
\[
j_! j^{\ast}\mathcal{F}[n] \to \mathcal{F}[n] \to i_{\ast} i^{\ast} \mathcal{F}[n].
\]
Since \(D\) is a Cartier divisor, \(j\) is quasi-finite and affine.
Therefore, \(j_!j^{\ast}\mathcal{F}[n]\), hence \(\iota_{\ast} (j_!j^{\ast}\mathcal{F}[n])\), is perverse.

Let \(Y\) be any degree \(d\) hypersurface in \(X\).
Let \(H \subset \mathbb{P}\) be the degree \(d\) hypersurface in \(\mathbb{P}\) such that \(Y = X \cap H\).
Then by
Theorem~\ref{theorem:main}(3) that we have
\[
B(Y, j_!j^{\ast}\mathcal{F}[n]|_Y) = B(H, \iota_{\ast} (j_!j^{\ast}\mathcal{F}[n])|_{H})
\leq 3\operatorname{rank}^{\circ}(\mathcal{F}) \cdot \deg(\overline{X}{}^{\circ})
\cdot d^n + C_3 \cdot d^{n-1}
\]
for some constant \(C_3\).  Here, \(\overline{X}{}^{\circ}\) is the
closure of \(X^{\circ}\).  The result then follows from an induction
on dimension: because
\[
B(Y, \mathcal{F}|_{Y}) \leq B(Y, j_!j^{\ast}\mathcal{F}[n]|_Y) + B(Y \cap D, i^{\ast} \mathcal{F}|_{Y\cap D}),
\]
we can simply take
\[
C = C_{\mathscr{L}}(X,\mathcal{F})=C_{3} + \max\left\{3\operatorname{rank}^{\circ}(i^{\ast}\mathcal{F})\cdot \deg(D), C_{i^{\ast}\mathscr{L}}(D,i^{\ast}\mathcal{F})\right\}.
\]
This completes the proof.
\end{proof}

The unpleasant factor \(3\) can be saved if \(X\) is a set-theoretically
a local complete intersection.

\begin{definition}
Let \(V\) be a variety over \(k\) of pure dimension \(n\).  We say
\(V\) is \emph{set-theoretically a local complete intersection} if
there exists an open affine cover \(V = \bigcup_{m\in I} V_{m}\) and
with the following property: for each \(m \in I\), there exists an integer \(r\geq 0\), and polynomials \(f_{1},\ldots, f_{r} \in k[T_{1},\ldots,T_{n+r}]\) such that
\[
V_{m,\mathrm{red}} \cong \operatorname{Spec}\frac{k[T_1,\ldots,T_{n+r}]}{\sqrt{(f_{1},\ldots,f_{r})}}.
\]
Clearly, any local complete intersection variety is set-theoretically a local complete intersection.
\end{definition}

\begin{lemma}\label{lemma:lci-perv}
Let \(V\) be a variety over \(k\) of pure dimension \(n\)
and let \(\mathcal{E}\) be a lisse sheaf of \(\Lambda\)-modules.
If \(V\) is set-theoretically a local complete intersection,
then \(\Lambda_V[n]\) is a perverse sheaf.
\end{lemma}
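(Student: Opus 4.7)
The plan is to reduce the statement to perversity of the shifted constant sheaf on a genuine complete intersection in affine space, and then establish the latter by iterating the Cartier divisor estimate of Lemma~\ref{lemma:estimate-perv-dergee}.

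First, by Lemma~\ref{lemma:check-connectivity}, perversity may be checked étale-locally, so I may work on one open piece $V_m$ of the cover produced by the hypothesis. Since the étale site of a scheme is insensitive to nilpotents, the constant sheaves on $V_m$, on $V_{m,\mathrm{red}}$, and on the complete intersection $W = V(f_1,\ldots,f_r) \subset \mathbb{A}^{n+r}$ whose radical cuts out $V_{m,\mathrm{red}}$ all correspond under the canonical identification of the three étale topoi. Hence it suffices to prove that $\Lambda_W[n]$ is perverse on $W$. The pure-dimensional hypothesis on $V$ forces $\dim W = \dim W_{\mathrm{red}} = n$, and since the ambient $\mathbb{A}^{n+r}$ is Cohen--Macaulay, this in turn forces $f_1,\ldots,f_r$ to be a regular sequence. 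Consequently each partial zero locus $W_j = V(f_1,\ldots,f_j)$ is pure of dimension $n+r-j$ and an effective Cartier divisor in $W_{j-1}$.

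The main step is an induction on $j$ showing that $\Lambda_{W_j}[n+r-j]$ is perverse on $W_j$, the base case $j=0$ being the smooth affine space. For the inductive step, I would apply Lemma~\ref{lemma:estimate-perv-dergee} to the perverse sheaf $\Lambda_{W_{j-1}}[n+r-j+1]$ along the Cartier divisor inclusion $W_j \hookrightarrow W_{j-1}$; this yields
\[
{}^{\mathrm{p}}\mathcal{H}^{e}\bigl(\Lambda_{W_j}[n+r-j]\bigr) = 0 \quad \text{for all } e \neq 0, 1,
\]
so $\Lambda_{W_j}[n+r-j]$ is already perverse coconnective. Perverse connectivity is then a trivial support condition, since the only nonzero ordinary cohomology sheaf of $\Lambda_{W_j}[n+r-j]$ lives in degree $-(n+r-j)$ and is supported on the purely $(n+r-j)$-dimensional $W_j$. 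Taking $j=r$ completes the argument.

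The main obstacle, which is essentially bookkeeping, is the initial reduction: one must justify that the passage from $V_m$ to the (possibly non-reduced) complete intersection $W$ is legitimate and that $W$ itself has the correct dimension~$n$. Both points follow from the pure-dimensional hypothesis on $V$ together with Cohen--Macaulayness of the ambient affine space.
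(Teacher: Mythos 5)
Your proposal is correct and follows essentially the same route as the paper: reduce via the étale-topos equivalence to a genuine complete intersection in affine space, obtain perverse coconnectivity by iterating Lemma~\ref{lemma:estimate-perv-dergee} along the defining equations, and observe that perverse connectivity is automatic from the support condition for a shifted constant sheaf. You merely supply details the paper leaves implicit (the Cohen--Macaulay/regular-sequence justification that each partial zero locus is a Cartier divisor of the expected dimension), which is a welcome addition but not a different argument.
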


\begin{proof}
Since \(V\) and \(V_{\mathrm{red}}\) have equivalent étale topoi,
and since the assertion is local on \(V\), we may suppose \(V\) is realized
as a complete intersection in \(\mathbb{A}^{n+r}\) defined by \(r\)
polynomials \(f_1, \ldots, f_r \in k[T_1, \ldots, T_{n+r}]\).  By
successively applying Lemma~\ref{lemma:estimate-perv-dergee} for each
defining equation, we conclude that \(\Lambda_V[n]\) is perverse
coconnective.  Moreover, as it is simply a shift of the constant sheaf
by \(n\), perverse connectivity holds automatically. The result
follows.
\end{proof}

\begin{lemma}\label{lemma:const-vs-local-const-perv}
Let \(V\) be a variety over \(k\) of pure dimension \(n\), and suppose
that \(\Lambda_V[n]\) is perverse.  Then, for any lisse sheaf
\(\mathcal{E}\) of \(\Lambda\)-modules on \(V\), the shifted sheaf
\(\mathcal{E}[n]\) is also perverse.
\end{lemma}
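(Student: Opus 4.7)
I would reduce to the case of the constant sheaf by an étale-local argument, invoking Lemma~\ref{lemma:check-connectivity}, which tells us that perversity descends along étale surjections.

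First, I would choose a surjective étale morphism $f\colon U \to V$ such that $f^{\ast}\mathcal{E} \cong \Lambda_U^{\oplus r}$, where $r$ is the rank of $\mathcal{E}$. For finite coefficients $\Lambda$ this is immediate from the definition: a lisse $\Lambda$-sheaf is simply an étale-locally constant sheaf of finite free $\Lambda$-modules, and the trivializing cover exists Zariski-locally on a sufficiently fine étale refinement. For an $\ell$-adic coefficient field the same reduction can be accomplished by passing to the finite-coefficient truncations of the pro-system defining $\mathcal{E}$, since the perverse $t$-structure for $\ell$-adic sheaves is built from these truncations.

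Next, I would exploit that for an étale morphism one has $f^{!} = f^{\ast}$, so that $f^{\ast}$ is $t$-exact for the perverse $t$-structure (cf.~\cite[\S4.2.4]{bbd}). Combined with the hypothesis that $\Lambda_V[n]$ is perverse, this shows $f^{\ast}(\Lambda_V[n]) = \Lambda_U[n]$ is perverse on $U$, and hence $f^{\ast}(\mathcal{E}[n]) \cong \Lambda_U[n]^{\oplus r}$ is perverse as well, being a finite direct sum of perverse sheaves. Lemma~\ref{lemma:check-connectivity} then yields that $\mathcal{E}[n]$ is perverse on $V$.

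The main obstacle, in my view, is the $\ell$-adic case: strictly speaking a lisse $\overline{\mathbb{Q}}_{\ell}$-sheaf need not become constant on any finite étale cover of $V$, since its monodromy representation may have infinite image. Should the reduction to finite-coefficient truncations prove less clean than I expect, a backup strategy is to verify perversity directly from the support conditions: since $\mathcal{E}$ is locally free, tensoring with $\mathcal{E}$ (respectively $\mathcal{E}^{\vee}$) is exact and leaves unchanged the dimensions of supports of the cohomology sheaves of $\Lambda_V[n]$ and of $\mathbb{D}_V(\Lambda_V[n])$, so the numerical perversity conditions for $\mathcal{E}[n] \cong \mathcal{E} \otimes_{\Lambda} \Lambda_V[n]$ and its Verdier dual are identical to those for $\Lambda_V[n]$.
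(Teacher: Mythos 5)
Your main argument is essentially the paper's own proof: reduce to finite coefficients via a lattice/truncation, trivialize \(\mathcal{E}\) on a finite étale cover, use \(t\)-exactness of étale pullback to see the pullback of \(\mathcal{E}[n]\) is a sum of copies of \(\Lambda_{U}[n]\), and descend via Lemma~\ref{lemma:check-connectivity} (the paper additionally observes that perverse connectivity is automatic for a sheaf placed in degree \(-n\) on an \(n\)-dimensional variety, so only coconnectivity needs the descent). Your backup argument --- that tensoring with a locally free \(\mathcal{E}\) commutes with \(\mathbb{D}_V\) up to replacing \(\mathcal{E}\) by \(\mathcal{E}^{\vee}\) and leaves all support dimensions unchanged --- is also correct and in fact bypasses the reduction to finite coefficients entirely.
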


\begin{proof}
If \(\Lambda\) is infinite, we may select an integral lattice in
\(\mathcal{E}\) and reduce to the finite case, so we may assume
without loss of generality that \(\Lambda\) is finite.  Furthermore,
since the property in question depends only on the reduced subscheme
underlying \(V\), we may assume that \(V\) is a local complete
intersection.

Given that \(\mathcal{E}\) is a lisse sheaf of \(\Lambda\)-modules
and \(\Lambda\) is finite, there exists a finite étale cover
\(\pi\colon V^{\prime} \to V\) such that \(\pi^*\mathcal{E}\) is a
constant sheaf, i.e.,
\(\pi^*\mathcal{E} \simeq \Lambda_{V^{\prime}}^{\oplus m}\).  The
morphism \(\pi\) being étale implies that \(V^{\prime}\) is itself a
local complete intersection and that the pullback functor \(\pi^*\) is
t-exact for the perverse t-structure.  Thus, \(\pi^*\mathcal{E}[n]\),
being a direct sum of \(\Lambda_{V^{\prime}}[n]\), is perverse
coconnective.  It follows from Lemma~\ref{lemma:check-connectivity}
that \(\mathcal{E}[n]\) is perverse coconnective on \(V\).  Finally,
as \(\dim V = n\) and \(\Lambda_V[n]\) is perverse connective by
assumption, the conclusion follows.
\end{proof}

\begin{theorem}\label{theorem:lci}
Let \(X\) be an irreducible projective variety of pure dimension
\(n\).  Let \(\mathscr{L}\) be a very ample line invertible sheaf on
\(X\).  Suppose \(\Lambda_X[n]\) is a perverse sheaf on \(X\).  (This
is the case, e.g., when \(X\) is set-theoretically a local complete
intersection, by Lemma~\ref{lemma:lci-perv}).

Then for any lisse sheaf \(\mathcal{E}\) on \(X\), there is a constant \(C\) (independent of the coefficient field
\(\Lambda\)), such that for any degree \(d\) hypersurface
\(Y \subset X\), we have
\[
B(Y, \mathcal{E}|_{Y}) \leq \deg(X) \cdot \mathrm{rank}(\mathcal{E}) \cdot d^n + C\cdot d^{n-1}.
\]
\end{theorem}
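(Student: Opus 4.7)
The plan is to reduce the assertion to Theorem~\ref{theorem:main}(2) applied to the perverse sheaf $\mathcal{P} := \iota_*\mathcal{E}[n]$ on the ambient projective space $\mathbb{P} = \mathrm{Proj}\,\mathrm{Sym}^{\bullet}\mathrm{H}^{0}(X,\mathscr{L})^{\vee}$, where $\iota\colon X \hookrightarrow \mathbb{P}$ is the closed embedding determined by $\mathscr{L}$. The hypothesis that $\Lambda_X[n]$ is perverse, combined with Lemma~\ref{lemma:const-vs-local-const-perv}, shows that $\mathcal{E}[n]$ is a perverse sheaf on $X$; since $\iota_*$ for a closed immersion is t-exact for the middle perversity, $\mathcal{P}$ is perverse on $\mathbb{P}$. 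Reading off the invariants in the sense of Section~\ref{sec:main-theorem}, one obtains $n(\mathcal{P}) = n$, $\delta(\mathcal{P}) = \deg(X)$ (with respect to $\mathscr{L}$), and $r(\mathcal{P}) = \mathrm{rank}(\mathcal{E})$.

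For any degree $d$ hypersurface $Y \subset X$, write $Y = X \cap H$ for a degree $d$ hypersurface $H \subset \mathbb{P}$. Proper base change applied to the cartesian square
\[
\begin{tikzcd}
Y \ar[r,"j"] \ar[d,"\iota_Y"'] & X \ar[d,"\iota"] \\
H \ar[r,"\iota_H"'] & \mathbb{P}
\end{tikzcd}
\]
identifies $\mathcal{P}|_H \simeq (\iota_Y)_*(\mathcal{E}|_Y[n])$, and since pushforward by a closed immersion preserves cohomology, $B(H, \mathcal{P}|_H) = B(Y, \mathcal{E}|_Y)$. The bound we want therefore follows at once from Theorem~\ref{theorem:main}(2) applied to $\mathcal{P}$ and to $H$, provided we can verify its perversity hypothesis for the specific $H$, namely that $\mathcal{P}|_H[-1] = (\iota_Y)_*(\mathcal{E}|_Y[n-1])$ is a perverse sheaf on $H$. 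By t-exactness of $(\iota_Y)_*$ this reduces to showing $\mathcal{E}|_Y[n-1]$ is perverse on $Y$.

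This last perversity check is the crux of the argument. When $X$ is set-theoretically a local complete intersection—the case emphasized in the statement—a degree $d$ hypersurface $Y \subset X$ is a Cartier divisor of pure dimension $n-1$ (since $X$ is irreducible and $Y$ is a proper subscheme), and so is itself set-theoretically a local complete intersection. Lemma~\ref{lemma:lci-perv} then gives that $\Lambda_Y[n-1]$ is perverse on $Y$, and Lemma~\ref{lemma:const-vs-local-const-perv} upgrades this to perversity of $\mathcal{E}|_Y[n-1]$. With this in hand, Theorem~\ref{theorem:main}(2) produces a constant $C > 0$, independent of $Y$, such that
\[
B(Y, \mathcal{E}|_Y) \;=\; B(H, \mathcal{P}|_H) \;\leq\; r(\mathcal{P})\,\delta(\mathcal{P})\,d^n + C\,d^{n-1} \;=\; \deg(X)\,\mathrm{rank}(\mathcal{E})\,d^n + C\,d^{n-1}.
\]
The independence of $C$ from the coefficient field $\Lambda$ is inherited from the explicit form of $C_2$ given in~\ref{situation:explication-2}: it is governed by the characteristic classes $\mathrm{cc}_j(\mathcal{P}|_{A_r})$ along a generic flag, and since $\mathcal{E}$ is lisse on $X$, these are equal to $\mathrm{rank}(\mathcal{E})$ times the corresponding characteristic classes of $\iota_*\Lambda_X[n]|_{A_r}$, which are geometric invariants of $X$ and the flag, insensitive to $\Lambda$.

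The main obstacle is precisely the perversity of $\mathcal{E}|_Y[n-1]$ for every hypersurface $Y$, which is where the local complete intersection hypothesis enters essentially. In the (set-theoretically) LCI setting the argument is clean, as sketched. Under the weaker hypothesis that $\Lambda_X[n]$ alone is perverse, Lemma~\ref{lemma:estimate-perv-dergee} only places $\mathcal{E}|_Y[n-1]$ in ${}^{\mathrm{p}}D_c^{[0,1]}(Y,\Lambda)$, so a possibly nonzero ${}^{\mathrm{p}}\mathcal{H}^{1}$ must be handled by a dévissage argument; this extra piece is supported in strictly lower dimension and absorbs into the $O(d^{n-1})$ error by induction on $\dim X$, using Theorem~\ref{theorem:arbitrary} applied to the lower-dimensional strata.
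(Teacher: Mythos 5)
Your reduction is the same as the paper's: push $\mathcal{E}[n]$ forward to $\mathbb{P}$, check $n(\mathcal{P})=n$, $\delta(\mathcal{P})=\deg X$, $r(\mathcal{P})=\operatorname{rank}(\mathcal{E})$ (all correct), and invoke Theorem~\ref{theorem:main}(2); the whole weight of the argument rests on the perversity of $\mathcal{E}|_{Y}[n-1]$ for \emph{every} degree $d$ hypersurface $Y$, exactly as in the paper.

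Where you diverge is in how you discharge that perversity. Your LCI argument ($Y$ is again set-theoretically an LCI, then Lemmas~\ref{lemma:lci-perv} and \ref{lemma:const-vs-local-const-perv}) is fine, but the theorem's actual hypothesis is only that $\Lambda_X[n]$ is perverse, and for that case your closing paragraph concedes a gap and proposes a d\'evissage on ${}^{\mathrm{p}}\mathcal{H}^{1}(\mathcal{E}|_Y[n-1])$. That patch, as sketched, would not obviously work: the hypothetical ${}^{\mathrm{p}}\mathcal{H}^{1}$ depends on $Y$, its support need not have dimension $\leq n-2$, and routing it through Theorem~\ref{theorem:arbitrary} would reintroduce the factor $3$ you are trying to avoid. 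Fortunately the d\'evissage is unnecessary: Lemma~\ref{lemma:estimate-perv-dergee} applied to the perverse sheaf $\mathcal{E}[n]$ puts $\mathcal{E}|_Y[n-1]$ in ${}^{\mathrm{p}}D^{[0,1]}_c$, i.e.\ gives coconnectivity, while connectivity is automatic because $\mathcal{E}|_Y[n-1]$ is a single sheaf sitting in ordinary degree $-(n-1)$ whose support $Y$ has dimension $\leq n-1$ (here one uses that $X$ is irreducible of dimension $n$, so $Y\neq X$). Hence ${}^{\mathrm{p}}\mathcal{H}^{1}=0$ and $\mathcal{E}|_Y[n-1]$ is perverse for arbitrary $Y$, with no LCI hypothesis and no induction. (This is the same observation the paper makes inside the proof of Lemma~\ref{lemma:lci-perv}: ``perverse connectivity holds automatically'' for a shifted sheaf.) With that one-line repair your proof is complete and matches the paper's; I would also note that your citation of Lemma~\ref{lemma:const-vs-local-const-perv} for upgrading from $\Lambda_X[n]$ to $\mathcal{E}[n]$ is actually more precise than the paper's own reference at that step.
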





\begin{proof}
By Lemma~\ref{lemma:lci-perv}, we see \(\mathcal{E}[n]\) is
a perverse sheaf.

Let \(\iota\colon X \to \mathbb{P}\) be the projective embedding
defined by \(\mathscr{L}\).  Let \(Y\) be a hypersurface of degree
\(d\).  Since \(X\) is \(n\)-dimensional and irreducible,
\(\dim Y = n-1\), and, by the Bertini theorem,
unless \(n=1\), \(Y\) remains irreducible.
This implies that \(\mathcal{E}|_{Y}[n-1]\) is necessarily a perverse
sheaf (Lemma~\ref{lemma:general-hyperplane-preserve-perv}).
We win by applying Theorem~\ref{theorem:main}(2) to the
perverse sheaf \(\iota_{\ast}\mathcal{E}[n]\).
\end{proof}

\section{Uniformity in \texorpdfstring{\(\ell\)}{ℓ}}
\label{sec:l-independence}

In this section, we show that if \(V\) can be defined over a finite
field, and the perverse sheaf \(\mathcal{P}\) is part of a compatible
system of perverse sheaves, then the constants in the upper bounds of
Theorem~\ref{theorem:main} can be made independent of \(\ell\).

\begin{situation}[Conventions]
In this section, we work over a finite base field \(\mathbf{F}_q\) of characteristic \(p\) and
let \(k\) be an algebraic closure of \(\mathbb{F}_{q}\).  We use
the following notation system:
capital Latin letters with a subscript \(0\), such as
\(X_0, S_0\), denote separated schemes of finite type defined over
\(\mathbf{F}_q\).  Removing the subscript (e.g., \(X, S\)) indicates
the base change of these schemes to a fixed algebraic closure
\(\overline{\mathbf{F}}_q\).  If
\(\mathcal{F} \in D^{b}_{c}(X_{0},\Lambda)\), we shall still denote by
\(\mathcal{F}\) its inverse image by the morphism \(X \to X_{0}\).
This will hardly cause any confusions.
\end{situation}

\begin{definition}[cf.~\cite{fujiwara_gabber-independence-of-l-for-intersection-cohomology}]
Let \(E\) be a field of characteristic \(0\).
Let \(I\) be a subset of
\[
\{(\ell,\iota) : \ell \neq p \text{ is a prime}, \iota\colon E \hookrightarrow \overline{\mathbb{Q}}_{\ell} \text{ is a field embedding}\}.
\]
For \(\alpha \in I\), denote by \(\ell_{\alpha}\) the first coordinate
of \(\alpha\).

Let \(X_{0}\) be a separated scheme of finite type over
\(\mathbb{F}_{q}\).  We say a system
\[
(\mathcal{F}_{\alpha})_{\alpha\in I} \in \prod_{\alpha\in I}
D^{b}_{c}(X_{0},\overline{\mathbb{Q}}_{\ell_{\alpha}})
\]
of constructible complexes on a finite type \(\mathbb{F}_{q}\)-scheme
\(X_{0}\) is \emph{weakly \((E,I)\)-compatible}, if
\begin{itemize}
\item the local factor
\[
\det(1 - t\mathrm{Frob}_{x},\mathcal{F}_{\alpha}) =
\prod_{i\in \mathbb{Z}} \det(1 - t\mathrm{Frob}_{x},\mathcal{H}^{i}(\mathcal{F}_{\alpha}))^{(-1)^{i-1}}
\]
lies in \(1 + tE[t]\)
for any closed point \(x\) of \(X\), where \(\mathrm{Frob}_{x}\) is the geometric Frobenius automorphism of \(\mathrm{Gal}(k/\kappa(x))\).
\item for any closed point \(x\) of \(X\), the polynomials
\(\det(1 - t\mathrm{Frob}_{x},\mathcal{F}_{\alpha})\) are all equal
for all \(\alpha \in I\).
\end{itemize}
\end{definition}

\begin{lemma}\label{lemma:euler-characteristic-independent-of-l}
Let \(X_0\) be a separated scheme of finite type over
\(\mathbb{F}_{q}\).  Let \((\mathcal{F}_{\alpha})_{i\in I}\) be a
weakly \((E,I)\)-compatible system.  Then
\(\chi(X,\mathcal{F}_{\alpha})\) is independent of \(\alpha\in I\).
\end{lemma}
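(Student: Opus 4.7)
The plan is to pass from the pointwise compatibility hypothesis to a global statement about Euler characteristics, using the Grothendieck--Lefschetz trace formula together with Grothendieck's rationality theorem for $L$-functions.

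First I would invoke Laumon's theorem on Euler--Poincar\'e characteristics, which gives $\chi(X,\mathcal{F}_\alpha) = \chi_c(X,\mathcal{F}_\alpha)$ for any constructible complex on a separated finite-type $k$-scheme; it therefore suffices to prove the statement for $\chi_c$ in place of $\chi$. Next, for each $n\ge 1$, the Grothendieck--Lefschetz trace formula applied to $X_0\otimes\mathbf{F}_{q^n}$ yields
\[
\sum_i (-1)^i \operatorname{Tr}\!\bigl(F^n,\mathrm{H}^i_c(X,\mathcal{F}_\alpha)\bigr)
= \sum_{x\in X_0(\mathbf{F}_{q^n})} \operatorname{Tr}\!\bigl(F_x^n,\mathcal{F}_\alpha\bigr),
\]
where the Frobenius trace on a complex is the alternating sum of traces on its cohomology sheaves. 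By the weak $(E,I)$-compatibility hypothesis, each local factor $\det(1-tF_x,\mathcal{F}_\alpha)$ lies in $1+tE[t]$ and is independent of $\alpha$; expanding its logarithm shows that each virtual trace $\operatorname{Tr}(F_x^n,\mathcal{F}_\alpha)$, and hence the right-hand side above, is a common element $a_n\in E$.

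To extract the Euler characteristic, I would form the global zeta-like series
\[
Z_\alpha(t) \coloneq \exp\!\Bigl(\sum_{n\ge 1} \tfrac{a_n}{n} t^n\Bigr)
= \prod_i \det\!\bigl(1 - tF,\mathrm{H}^i_c(X,\mathcal{F}_\alpha)\bigr)^{(-1)^{i+1}} \in \overline{\mathbb{Q}}_{\ell_\alpha}(t),
\]
where the second equality and the rationality of $Z_\alpha$ come from the trace formula combined with Grothendieck's rationality theorem. The power-series coefficients of $Z_\alpha$ involve only the $a_n$, so via the fixed embedding $\iota_\alpha$ they are the image of a single series $Z(t)\in E[[t]]$ independent of $\alpha$. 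A Fatou--Kronecker-type lemma (a power series over $E$ that becomes a rational function after scalar extension is already rational over $E$) then shows $Z\in E(t)$; the degree of $Z$ as a rational function, i.e.\ numerator degree minus denominator degree, equals $\sum_i (-1)^{i+1}\dim\mathrm{H}^i_c(X,\mathcal{F}_\alpha) = -\chi_c(X,\mathcal{F}_\alpha)$. Since this degree is intrinsic to $Z$, the Euler characteristic does not depend on $\alpha$.

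The main (mild) obstacle is the final descent from a power-series identity in $E[[t]]$ to the equality of degrees of the corresponding rational functions over different $\ell$-adic fields; this is handled by the elementary Fatou--Kronecker criterion through the ranks of the Hankel determinants of $(a_n)$, which are field-independent invariants.
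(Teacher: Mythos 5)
Your proposal is correct and follows essentially the same route as the paper: express the $L$-function via the trace formula, observe that weak $(E,I)$-compatibility makes it a single power series in $1+tE[[t]]$ independent of $\alpha$, and read off $\chi_c=\chi$ from the degree of the resulting rational function. You are in fact slightly more careful than the paper in two places — invoking Laumon's $\chi=\chi_c$ explicitly and justifying the descent of rationality and degree to $E(t)$ via the Kronecker--Hankel criterion — but the argument is the same.
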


\begin{proof}
This follows immediately from the Grothendieck trace formula.  Indeed,
by the compatibility, the L-functions
\[
L(\mathcal{F}_{\alpha},t) = \prod_{x\in|X|}
\det(1-t\mathrm{Frob}_{x},\mathcal{F}_{\alpha})^{-1}
\]
lies in \(1+tE\llbracket{t}\rrbracket\) independent of \(\ell\).  By
the trace formula, \(L(\mathcal{F},t)\) is a rational function in
\(E(t)\), and \(\chi(X,\mathcal{F}_{\alpha})\) equals the degree
of the denominator of \(L(\mathcal{F}_{\alpha},t)\) minus the degree
of the numerator of \(L(\mathcal{F}_{\alpha},t)\), hence is also
independent of \(\ell\).
\end{proof}

\begin{situation}
[Construction]
\label{situation:flag-construction}
Let \(\mathcal{M}\) denote the space of full flags in the projective space
\(\mathbb{P}^{N}\).  Let \(\mathcal{A}^{(e)} \to \mathcal{M}\) be the
universal codimension \(e\) linear subspace.  Let \(\overline{\eta}\) be a
geometric generic point of \(\mathcal{M}\).  For any locally closed subset
\(X \subset \mathbb{P}^{n}\), we consider the following commutative diagram,
in which every square is cartesian:
\[
\begin{tikzcd}
X^{(e)}_{\overline{\eta}} \ar[rr] \ar[d] & &  X \ar[d,hook] \\
A^{(e)}_{\overline{\eta}} \ar[r] \ar[d] &
\mathcal{A}^{(e)} \ar[d] \ar[r] & \mathbb{P} \\
\overline{\eta} \ar[r] & \mathcal{M}
\end{tikzcd}.
\]
For any perverse sheaf \(\mathcal{P}\) on \(X\), let
\(\mathcal{P}^{(e)}\) denote the inverse image of \(\mathcal{P}[-e]\) via
the morphism \(X^{(e)}_{\overline{\eta}} \to X\).  By
Lemma~\ref{lemma:general-hyperplane-preserve-perv},
\(\mathcal{P}^{(e)}\) is always a perverse sheaf.
\end{situation}

\begin{lemma}\label{lemma:katz}
Notation as in Construction~\ref{situation:flag-construction},
suppose \(X_0 \subset \mathbb{P}^{N}_{\mathbb{F}_q}\) is affine.
Let \((\mathcal{P}_{\alpha})_{\alpha \in I}\) be a weakly
\((E, I)\)-compatible system of perverse sheaves.  Then there exists a
collection of numbers \(P_{e} = P_{e}(X, (\mathcal{P}_{\alpha}))\) for
\(e = 0, 1, \ldots\), such that, for any \(e\), we have
\[
B_{c}(X^{(e)}_{\overline{\eta}}, \mathcal{P}_{\alpha}^{(e)})
\coloneq
\sum_{j \in \mathbb{Z}}
\dim \mathrm{H}^{j}_{c}(X^{(e)}_{\overline{\eta}}; \mathcal{P}^{(e)}_{\alpha})
\leq P_{e}
\]
for all \(\alpha \in I\).
\end{lemma}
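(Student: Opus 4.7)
The plan is to adapt Katz's Euler-characteristic induction (Method~\ref{method:euler-characteristic}) to the compatible-system setting. Fix \(\alpha \in I\) and set \(b_j^{(e)} = \dim \mathrm{H}^j_c(X^{(e)}_{\overline{\eta}}; \mathcal{P}^{(e)}_\alpha)\). Since \(X^{(e)}_{\overline{\eta}} = X \cap A^{(e)}_{\overline{\eta}}\) is closed in the affine scheme \(X\), it is itself affine. By Lemma~\ref{lemma:general-hyperplane-preserve-perv} and the genericity of \(\overline{\eta}\), \(\mathcal{P}^{(e)}_\alpha\) is perverse on \(X^{(e)}_{\overline{\eta}}\). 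Artin's vanishing (Theorem~\ref{theorem:relative-affine-lefschetz}) applied to \(R\Gamma_c\) then gives \(b_j^{(e)} = 0\) for \(j < 0\), so \(B_c(X^{(e)}_{\overline{\eta}}, \mathcal{P}^{(e)}_\alpha) = \sum_{j \geq 0} b_j^{(e)}\).

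Next, I would apply the perverse weak Lefschetz theorem (Lemma~\ref{lemma:gysin}) to the quasi-finite inclusion \(X^{(e)}_{\overline{\eta}} \hookrightarrow A^{(e)}_{\overline{\eta}} \cong \mathbb{P}^{N-e}_{\overline{\eta}}\), with \(A^{(e+1)}_{\overline{\eta}}\) playing the role of the ``general hyperplane''; this is legitimate because \(\overline{\eta}\) is the geometric generic point of \(\mathcal{M}\). Using the identification \(\mathcal{P}^{(e)}_\alpha|_{X^{(e+1)}_{\overline{\eta}}}[-1] \cong \mathcal{P}^{(e+1)}_\alpha\) and the fact that Tate twists preserve dimensions, the Gysin maps yield inequalities \(b_j^{(e)} \leq b_{j-1}^{(e+1)}\) for every \(j \geq 1\). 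Combining this with the identity
\[
b_0^{(e)} = \chi_c(X^{(e)}_{\overline{\eta}}, \mathcal{P}^{(e)}_\alpha) + \sum_{j \geq 1} (-1)^{j+1} b_j^{(e)}
\]
yields the Katz-type recursion
\[
B_c(X^{(e)}_{\overline{\eta}}, \mathcal{P}^{(e)}_\alpha) \leq
\bigl|\chi_c(X^{(e)}_{\overline{\eta}}, \mathcal{P}^{(e)}_\alpha)\bigr| + 2\,B_c(X^{(e+1)}_{\overline{\eta}}, \mathcal{P}^{(e+1)}_\alpha).
\]
Since \(\mathcal{P}^{(e)}_\alpha = 0\) as soon as \(e\) exceeds \(\dim \operatorname{Supp}(\mathcal{P}_\alpha)\), the recursion terminates and telescopes to
\[
B_c(X^{(e)}_{\overline{\eta}}, \mathcal{P}^{(e)}_\alpha) \leq \sum_{k \geq 0} 2^k\,\bigl|\chi_c(X^{(e+k)}_{\overline{\eta}}, \mathcal{P}^{(e+k)}_\alpha)\bigr|.
\]

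The main obstacle, and the crux of the lemma, is proving that each Euler characteristic on the right is independent of \(\alpha\); this is where the compatibility hypothesis is consumed. Spread out \(\mathcal{P}^{(e)}_\alpha\) to the family \(f^{(e)} \colon \mathcal{X}^{(e)} \to \mathcal{M}\) and invoke Deligne's generic base change \cite{deligne_finitude}: there is a dense open \(U_\alpha \subset \mathcal{M}\) on which \(R(f^{(e)})_! \mathcal{P}^{(e)}_\alpha\) is lisse, and in particular its stalk at \(\overline{\eta}\) agrees with its fiber at any closed point of \(U_\alpha\). For any two indices \(\alpha, \alpha' \in I\), the intersection \(U_\alpha \cap U_{\alpha'}\) is a nonempty open subvariety of the \(\mathbb{F}_q\)-variety \(\mathcal{M}\) and therefore contains a closed point \(t\) defined over some finite extension \(\mathbb{F}_{q^n}\). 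Generic base change then gives
\[
\chi_c(X^{(e)}_{\overline{\eta}}, \mathcal{P}^{(e)}_\beta) = \chi_c(X^{(e)}_t, \mathcal{P}^{(e)}_\beta|_t)
\qquad \text{for } \beta \in \{\alpha,\alpha'\}.
\]
Since the restrictions \((\mathcal{P}^{(e)}_\beta|_t)_{\beta \in I}\) again form a weakly \((E,I)\)-compatible system on the \(\mathbb{F}_{q^n}\)-scheme \(X^{(e)}_t\), Lemma~\ref{lemma:euler-characteristic-independent-of-l} forces \(\chi_c(X^{(e)}_t, \mathcal{P}^{(e)}_\alpha|_t) = \chi_c(X^{(e)}_t, \mathcal{P}^{(e)}_{\alpha'}|_t)\), and the common value is independent of \(\alpha\). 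Defining \(P_e\) to be the (finite) sum in the telescoped inequality then completes the proof.
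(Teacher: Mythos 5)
Your proposal is correct and follows essentially the same route as the paper: Artin vanishing for compactly supported cohomology on the affine \(X^{(e)}_{\overline{\eta}}\), the perverse weak Lefschetz chain \(b^{(e)}_j \leq b^{(e+1)}_{j-1}\), and \(\ell\)-independence of the generic Euler characteristics obtained from the compatibility hypothesis via Lemma~\ref{lemma:euler-characteristic-independent-of-l}. The only difference is in the final bookkeeping: the paper combines the Lefschetz chain with the Euler characteristic identity to get the two-term bound \(h^0_c(X^{(e)}_{\overline{\eta}};\mathcal{P}^{(e)}_\alpha) \leq \chi(X^{(e)}_{\overline{\eta}};\mathcal{P}^{(e)}_\alpha) + \chi(X^{(e+1)}_{\overline{\eta}};\mathcal{P}^{(e+1)}_\alpha)\), whereas your recursion with the factor \(2^k\) yields a weaker but equally sufficient constant \(P_e\).
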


\begin{proof}
We adapt Katz's argument based on the Euler
characteristic~\cite{katz_betti}, originally established for local
systems, to our setting of perverse sheaves.  For convenience, we
denote \(h^i_{c}(-) = \dim \mathrm{H}^{i}_{c}(-)\) throughout the
proof.

By constructibility of direct images, there exists an open dense
subscheme \(U(\alpha) \subset \mathcal{M}\) such that, for any flag
\[
A_0 \supset A_1 \supset \cdots \supset A_e \supset \cdots
\]
with \(A_{\bullet} \in U(\alpha)\), the Betti numbers
\[
h^i_{c}(X \cap A_e; \mathcal{P}_{\alpha}|_{X \cap A_e}[-e])
\]
are constant.  By the smooth and proper base change theorems, these
Betti numbers are equal to
\(h^i(X^{(e)}_{\overline{\eta}}; \mathcal{P}^{(e)}_{\alpha})\).

Now, for any \(\alpha, \beta \in I\), if \(A_{\bullet}\) lies in
\(U(\alpha) \cap U(\beta)\),
Lemma~\ref{lemma:euler-characteristic-independent-of-l} yields
\[
\chi(X \cap A_e; \mathcal{P}_{\alpha}|_{X \cap A_e}[-e]) =
\chi(X \cap A_e; \mathcal{P}_{\beta}|_{X \cap A_e}[-e])\,,
\]
which implies
\begin{equation}
\label{eq:generic-indept-ell-euler}
\chi(X^{(e)}_{\overline{\eta}}; \mathcal{P}^{(e)}_{\alpha}) =
\chi(X^{(e)}_{\overline{\eta}}; \mathcal{P}^{(e)}_{\beta})\,.
\end{equation}
In other words, the Euler characteristic of
\(\mathcal{P}^{(e)}_{\alpha}\) is independent of \(\ell\).

By Artin's vanishing theorem (Theorem~\ref{theorem:relative-affine-lefschetz}),
we have \(h^{i}_{c}(X^{(e)}_{\overline{\eta}}; \mathcal{P}^{(e)}_{\alpha}) = 0\) for all \(i < 0\),
so we only consider non-negative degrees below.

Next, the weak Lefschetz theorem (Lemma~\ref{lemma:gysin})
ensures that for each \(\alpha\) and a general flag \(A_{\bullet}\) in \(U(\alpha)\),
\begin{equation*}
\mathrm{H}^{j}_{c}(X \cap A_{e+1}; \mathcal{P}_{\alpha}|_{X\cap A_{e}}[-(e+1)](-1))
\to \mathrm{H}^{j+1}_{c}(X \cap A_{e};\mathcal{P}_{\alpha}[-e])
\end{equation*}
is surjective for \(j = 0\) and an isomorphism for \(j \geq 1\).
Thus, by smooth and proper base change, for all \(e \geq 0\),
\begin{equation}
\label{eq:chain-ineq}
h^{0}_{c}(X^{(e)}_{\overline{\eta}}; \mathcal{P}^{(e)}_{\alpha}) \geq
h^{1}_{c}(X^{(e-1)}_{\overline{\eta}}; \mathcal{P}^{(e-1)}_{\alpha})
\geq \cdots \geq h^{e}_{c}(X; \mathcal{P}_{\alpha})
\end{equation}
and, moreover,
\begin{equation}
\label{eq:chain-eq}
h^{j}_{c}(X^{(e)}_{\overline{\eta}}; \mathcal{P}^{(e)}_{\alpha}) =
h^{j+1}_{c}(X^{(e-1)}_{\overline{\eta}}; \mathcal{P}_{\alpha}^{(e-1)}) = \cdots =
h^{j+e}_{c}(X;\mathcal{P}_{\alpha}) \quad \text{for } j \geq 1\,.
\end{equation}

Applying \eqref{eq:chain-ineq} and \eqref{eq:chain-eq} for any \(e \geq 0\), we obtain
\begin{align*}
  \chi(X^{(e)}_{\overline{\eta}}; \mathcal{P}^{(e)}_{\alpha})
  &= h^{0}_{c}(X^{(e)}_{\overline{\eta}}; \mathcal{P}^{(e)}_{\alpha}) - h^{1}_{c}(X^{(e)}_{\overline{\eta}}; \mathcal{P}^{(e)}_{\alpha}) + h^{2}_{c}(X^{(e)}_{\overline{\eta}}; \mathcal{P}^{(e)}_{\alpha}) - \cdots \\
  &\geq h^{0}_{c}(X^{(e)}_{\overline{\eta}}; \mathcal{P}^{(e)}_{\alpha}) - h^{0}_{c}(X^{(e+1)}_{\overline{\eta}}; \mathcal{P}^{(e+1)}_{\alpha}) + h^{1}_{c}(X^{(e+1)}_{\overline{\eta}}; \mathcal{P}^{(e+1)}_{\alpha}) - \cdots \\
  &= h^{0}_{c}(X^{(e)}_{\overline{\eta}}; \mathcal{P}^{(e)}_{\alpha}) - \chi(X^{(e+1)}_{\overline{\eta}}; \mathcal{P}^{(e+1)}_{\alpha}).
\end{align*}
Thus, using \eqref{eq:chain-ineq}, we find
\[
h^{e}_{c}(X; \mathcal{P}_{\alpha})
\leq h^{0}_{c}(X^{(e)}_{\overline{\eta}}; \mathcal{P}^{(e)}_{\alpha})
\leq \chi(X^{(e)}_{\overline{\eta}}; \mathcal{P}^{(e)}_{\alpha}) + \chi(X^{(e+1)}_{\overline{\eta}}; \mathcal{P}^{(e+1)}_{\alpha})\,.
\]
In view of \eqref{eq:generic-indept-ell-euler},
this gives an \(\ell\)-independent upper bound for the dimension of
each cohomology group of \(\mathcal{P}_{\alpha}\). The proof is
complete.
\end{proof}

The following lemma should be well-known to the specialists.

\begin{lemma}\label{lemma:cc-independent-of-ell}
If \((\mathcal{F}_{\alpha})_{\alpha\in I}\) is a weakly
\((E,I)\)-compatible system on a smooth variety \(X_{0}\), then the
characteristic cycle \(\mathrm{CC}(\mathcal{F}_{\alpha})\) is
independent of \(\alpha\).
\end{lemma}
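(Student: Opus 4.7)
The plan is to invoke T.~Saito's characterization of the characteristic cycle via the Milnor formula and thereby reduce the $\ell$-independence of $\mathrm{CC}(\mathcal{F}_{\alpha})$ to the $\ell$-independence of the singular support and of the total dimensions of vanishing cycles. Recall that by \cite{saito_takeshi_characteristic-cycle-and-singular-support}, $\mathrm{CC}(\mathcal{F})$ is uniquely determined as the $\mathbb{Z}$-linear combination of irreducible components of the singular support $\mathrm{SS}(\mathcal{F})$ satisfying the Milnor formula: for every étale $u\colon W \to X_{0}$ and every regular function $f\colon W \to \mathbb{A}^{1}$ whose differential is $\mathrm{SS}(u^{\ast}\mathcal{F})$-transverse off an isolated point $w$,
$$-\dim_{\mathrm{tot}}\phi_{f}(u^{\ast}\mathcal{F},w) = \bigl(\mathrm{CC}(\mathcal{F}),[df]\bigr)_{T^{\ast}W,w}.$$
Hence it is enough to verify two independences: (i) the singular support $\mathrm{SS}(\mathcal{F}_{\alpha})$ is the same for all $\alpha$; and (ii) for every test datum $(u,f,w)$, the quantity $\dim_{\mathrm{tot}}\phi_{f}(\mathcal{F}_{\alpha},w)$ does not depend on $\alpha$.

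For (i), I would appeal to Beilinson's description of the singular support in terms of local acyclicity with respect to arbitrary test pairs $(W,f)$. Local acyclicity is equivalent to the vanishing of all vanishing-cycle complexes $\phi_{f}(u^{\ast}\mathcal{F})$, a property that is detectable on the stalks via the Grothendieck trace formula applied to the local Milnor fibration. Since the nearby and vanishing cycle functors preserve weak $(E,I)$-compatibility, Lemma~\ref{lemma:euler-characteristic-independent-of-l} applied to these local Milnor fibers shows that the non-acyclic locus in $T^{\ast}X$ is the same for all $\alpha$, yielding $\mathrm{SS}(\mathcal{F}_{\alpha}) = \mathrm{SS}(\mathcal{F}_{\beta})$.

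For (ii), I would decompose the total dimension as $\dim\phi_{f}(\mathcal{F}_{\alpha},w) + \mathrm{Sw}(\phi_{f}(\mathcal{F}_{\alpha},w))$. The rank part is $\ell$-independent by Lemma~\ref{lemma:euler-characteristic-independent-of-l} applied to the Milnor fiber, together with the fact that, under the isolated-singularity hypothesis, $\phi_{f}(u^{\ast}\mathcal{F})$ is concentrated in a single degree (see the discussion preceding Theorem~\ref{theorem:index}), so its dimension coincides up to sign with an Euler characteristic. The hard part is the Swan-conductor part: one must show that, for a compatible system, the Swan conductor of $\phi_{f}$ is itself $\ell$-independent. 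This is the essential obstacle and is precisely the ``specialists' knowledge'' referenced in the lemma; it can be deduced from the $\ell$-independence of wild ramification invariants for compatible systems (via results of Deligne, Laumon, T.~Saito, and Vidal), or established directly by passing to a suitable curve transverse to the stratification and invoking the Grothendieck--Ogg--Shafarevich formula together with $\ell$-independence of the relevant Euler characteristics. Once (i) and (ii) are in hand, the uniqueness in the Milnor formula immediately gives $\mathrm{CC}(\mathcal{F}_{\alpha}) = \mathrm{CC}(\mathcal{F}_{\beta})$ for all $\alpha,\beta \in I$.
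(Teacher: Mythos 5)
Your outline---characterize $\mathrm{CC}(\mathcal{F}_{\alpha})$ by the Milnor formula and check $\ell$-independence of its inputs---has two genuine gaps, and they sit exactly where the content of the lemma lies. First, step (i): weak $(E,I)$-compatibility only gives equality of Frobenius characteristic polynomials at closed points, hence (via the trace formula, Lemma~\ref{lemma:euler-characteristic-independent-of-l}) equality of Euler characteristics of arbitrary pullbacks. Euler characteristics do not detect the \emph{vanishing} of a vanishing-cycles complex: for a general constructible complex, $\phi_{f}(u^{\ast}\mathcal{F}_{\alpha})$ need not be concentrated in one degree, so it can be nonzero while all its stalkwise Euler characteristics vanish. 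Your deduction that the non-acyclicity locus, hence $\mathrm{SS}(\mathcal{F}_{\alpha})$, is independent of $\alpha$ therefore does not follow, and the $\ell$-independence of the singular support is not a formal consequence of weak compatibility. Second, step (ii): you correctly isolate the Swan part of $\dim_{\mathrm{tot}}\phi_{f}$ as the essential obstacle, but the results you invoke (Deligne, Vidal, et al.) take ``same wild ramification'' as a \emph{hypothesis}; weak compatibility supplies no such hypothesis. Passing from equality of Frobenius traces (equivalently, universally the same Euler characteristics) to equality of ramification/vanishing-cycle invariants is precisely the theorem of Saito--Yatagawa (``wild ramification determines the characteristic cycle,'' Proposition~3.4 of their paper) that the text's proof cites; as written, your proposal assumes this at the decisive step. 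A smaller but real issue: Lemma~\ref{lemma:euler-characteristic-independent-of-l} cannot be ``applied to the Milnor fiber,'' which is the geometric generic fiber over a strictly henselian trait, not a scheme of finite type over $\mathbb{F}_{q}$; and the paper contains no discussion of concentration of $\phi_{f}$ in a single degree.

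For comparison, the paper's proof is a two-line reduction: weak compatibility plus the trace formula shows that the $\mathcal{F}_{\alpha}$ have universally the same Euler characteristics ($\chi(W,h^{\ast}\mathcal{F}_{\alpha})$ independent of $\alpha$ for every $h\colon W\to X$); Saito--Yatagawa then yields $\mathrm{CC}(\mathcal{F}_{\alpha})=\mathrm{CC}(\mathcal{F}_{\beta})$ for torsion coefficients; and the decomposition homomorphism of Umezaki--Yang--Zhao, being compatible with the six operations, transports the statement to $\overline{\mathbb{Q}}_{\ell}$-coefficients (a step your proposal also omits, and which is needed because the ramification-theoretic results are established for finite coefficients). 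To repair your argument you would either have to reprove Saito--Yatagawa along your Milnor-formula route, or simply cite it---at which point the Milnor-formula machinery becomes unnecessary.
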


(A related compatibility,
namely \((\mathcal{F}_{\alpha})_{\alpha\in I}\) have
\emph{compatible ramification}, was shown in
\cite{lu-zheng_compatible-ramification}.)

\begin{proof}
Let \(h\colon W_0 \to X_0\) be any morphism.  Then, the system of
pullbacks \((h^*\mathcal{F}_\alpha)_{\alpha \in I}\) is again weakly
\((E, I)\)-compatible.  By
Lemma~\ref{lemma:euler-characteristic-independent-of-l}, for any
\(\alpha, \beta \in I\), the Euler characteristics agree:
\[
\chi(W, h^{*}\mathcal{F}_\alpha) = \chi(W, h^{*}\mathcal{F}_\beta).
\]
In other words, the complexes \(\mathcal{F}_\alpha\) and
\(\mathcal{F}_\beta\) have universally the same Euler characteristics,
in the sense of
\cite[Definition~3.1]{saito-yatagawa_wild-ramification-determines-characteristic-cycle}.

According to
\cite{saito-yatagawa_wild-ramification-determines-characteristic-cycle},
when the coefficient fields are finite, having universally the same
Euler characteristics implies that the characteristic cycles coincide.
To apply this result in our situation, we need to relate the
characteristic cycles defined with
\(\overline{\mathbb{Q}}_\ell\)-coefficients to those over finite
fields.

This connection is established by the ``decomposition homomorphism''
\(d_X\), constructed by Umezaki, Yang, and Zhao
\cite[\S5.2.6]{umezaki-yang-zhao}, which maps the Grothendieck group
of constructible \(\overline{\mathbb{Q}}_\ell\)-complexes to that of
constructible \(\overline{\mathbb{F}}_\ell\)-complexes.  These
decomposition homomorphisms are compatible with ordinary and
extraordinary direct image and inverse image functors
(\cite[\S5.2.7]{umezaki-yang-zhao}); thus, for any
\(\alpha, \beta \in I\), the Euler characteristics of the complexes
\(d_X\mathcal{F}_\alpha\) and \(d_X\mathcal{F}_\beta\) also agree
universally (cf.~the commutativity of the upper right diagram in
\cite[(5.2.7.1)]{umezaki-yang-zhao}).

Now, applying \cite[Proposition~3.4]{saito-yatagawa_wild-ramification-determines-characteristic-cycle}, we find that
\[
\begin{tikzcd}[row sep=small]
\mathrm{CC}(d_{X}\mathcal{F}_{\alpha}) \ar[r,equal] \ar[d,equal,"\text{\cite[5.{3}.2]{umezaki-yang-zhao}}",swap] & \mathrm{CC}(d_{X}\mathcal{F}_{\beta}) \ar[d,equal,"\text{\cite[5.{3}.2]{umezaki-yang-zhao}}"] \\
\mathrm{CC}(\mathcal{F}_{\alpha}) & \mathrm{CC}(\mathcal{F}_{\beta})
\end{tikzcd}
\]
for every pair \(\alpha, \beta \in I\).  In other words, the
characteristic cycle \(\mathrm{CC}(\mathcal{F}_{\alpha})\) is
independent of \(\alpha\), as claimed.
\end{proof}

\begin{lemma}\label{lemma:characteristic-numbers-independent-ell}
Let \(\mathbb{P}_{0}\) be a smooth projective variety over
\(\mathbb{F}_{q}\) together with a very ample invertible sheaf
\(\mathscr{L}\).  Let \((\mathcal{F}_{\alpha})_{\alpha\in I}\) be a
weakly \((E,I)\)-compatible system.  Then for each \(i \geq 0\), the
characteristic numbers
\[
c_{1}(\mathscr{L})^{i} \capprod \mathrm{cc}_{\mathbb{P}, i}(\mathcal{F}_{\alpha})
\quad (\alpha \in I)
\]
are independent of \(\ell\).
\end{lemma}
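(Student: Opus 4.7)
The plan is to deduce the lemma almost formally from Lemma~\ref{lemma:cc-independent-of-ell} together with the explicit construction of the characteristic class recalled in~\ref{situation:construction-cc}. Lemma~\ref{lemma:cc-independent-of-ell} already tells us that the characteristic cycle $\mathrm{CC}(\mathcal{F}_{\alpha}) \in \mathrm{CH}_{n}(T^{\ast}\mathbb{P})$ is the same $n$-cycle for every $\alpha \in I$ (once we identify the Chow groups of $T^{\ast}\mathbb{P}$, which are defined geometrically and in particular independently of $\ell$).

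Next, I would unwind Saito's definition of the characteristic class. Recall that
\[
\mathrm{cc}_{\mathbb{P}}(\mathcal{F}_{\alpha}) = c(\Omega^{1}_{\mathbb{P}}) \capprod \varpi_{\ast}\!\left(\sum_{i \geq 0} c_{1}(\mathscr{O}(1))^{i} \capprod \overline{\mathrm{CC}}(\mathcal{F}_{\alpha})\right),
\]
where $\overline{\mathrm{CC}}(\mathcal{F}_{\alpha})$ is the projectivization of $\mathrm{CC}(\mathcal{F}_{\alpha}) \times_{\mathbb{P}} \mathbb{A}^{1}_{\mathbb{P}}$, and $\varpi\colon \mathbb{P}(T^{\ast}\mathbb{P} \times_{\mathbb{P}} \mathbb{A}^{1}_{\mathbb{P}}) \to \mathbb{P}$ is the structural projection. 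Each ingredient on the right---the projectivization operation, the Chern classes of $\Omega^{1}_{\mathbb{P}}$ and $\mathscr{O}(1)$, the cap product, and the proper pushforward $\varpi_{\ast}$---is a purely intersection-theoretic construction on Chow groups with integer coefficients, intrinsic to the variety $\mathbb{P}$ and the bundle $\mathscr{O}(1)$, and in particular insensitive to the choice of coefficient field. Since the only input that involves the sheaf is $\mathrm{CC}(\mathcal{F}_{\alpha})$, which we just saw is $\ell$-independent, it follows that $\mathrm{cc}_{\mathbb{P}}(\mathcal{F}_{\alpha}) \in \mathrm{CH}_{\ast}(\mathbb{P})$ is independent of $\alpha$, and hence so is each graded piece $\mathrm{cc}_{\mathbb{P},i}(\mathcal{F}_{\alpha}) \in \mathrm{CH}_{i}(\mathbb{P})$.

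Finally, $c_{1}(\mathscr{L})$ is also $\ell$-independent, so capping with $c_{1}(\mathscr{L})^{i}$ and applying the degree map $\int_{\mathbb{P}}\colon \mathrm{CH}_{0}(\mathbb{P}) \to \mathbb{Z}$ yields an integer $c_{1}(\mathscr{L})^{i} \capprod \mathrm{cc}_{\mathbb{P},i}(\mathcal{F}_{\alpha})$ that does not depend on $\alpha$. There is essentially no obstacle in this argument: the real content is already packaged into Lemma~\ref{lemma:cc-independent-of-ell}, and the present lemma is a formal consequence of unwinding the definition of $\mathrm{cc}_{\mathbb{P}}$.
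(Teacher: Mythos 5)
Your argument is correct, and it is in fact the first justification the paper itself gives: the proof opens by noting that the lemma ``follows from Lemma~\ref{lemma:cc-independent-of-ell} and the definition of characteristic classes,'' which is precisely your reasoning --- $\mathrm{CC}(\mathcal{F}_{\alpha})$ is $\ell$-independent, and every subsequent step in Saito's construction of $\mathrm{cc}_{\mathbb{P}}$ (projectivization, cap products with Chern classes, proper pushforward) is purely intersection-theoretic and blind to the coefficient field. The paper then elaborates a second, ``much more elementary'' argument instead: by \eqref{eq:characteristic-class-polynomial}, the characteristic numbers $c_{1}(\mathscr{L})^{j} \capprod \mathrm{cc}_{j}(\mathcal{F}_{\alpha})$ are the coefficients of the polynomial $d \mapsto \chi(Y_{d}, \mathcal{F}_{\alpha}|_{Y_{d}}[-1])$ for a general degree~$d$ hypersurface $Y_{d}$ (which exists after a finite extension of $\mathbb{F}_{q}$), and these Euler characteristics are $\ell$-independent by Lemma~\ref{lemma:euler-characteristic-independent-of-l} alone. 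The trade-off is that your route leans on the full strength of Lemma~\ref{lemma:cc-independent-of-ell}, which itself rests on the Saito--Yatagawa theorem and the Umezaki--Yang--Zhao decomposition homomorphisms, whereas the paper's elaborated argument needs only the Grothendieck trace formula; on the other hand, your route yields the stronger conclusion that the entire class $\mathrm{cc}_{\mathbb{P}}(\mathcal{F}_{\alpha}) \in \mathrm{CH}_{\ast}(\mathbb{P})$, not merely the listed characteristic numbers, is independent of $\alpha$. Both are complete proofs; no gap.
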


\begin{proof}
This follows from Lemma~\ref{lemma:cc-independent-of-ell} and the
definition of characteristic classes\(\mathrm{cc}\)
(cf.~\ref{situation:construction-cc}).  But there is also a much more
elementary argument for this easier statement.  By
\eqref{eq:characteristic-class-polynomial}, the characteristic numbers
are the coefficients of the polynomial
\[
P_{\alpha}(d) = \chi(Y_{d}, \mathcal{F}_{\alpha}|_{Y_{d}}[-1])
\]
where \(Y_{d} \subset \mathbb{P}\) is the hypersurface defined by a
sufficiently general section of \(\mathscr{L}^{\otimes d}\).  Such a
hypersurface may not be defined over \(\mathbb{F}_{q}\), but it exists
after performing a finite extension of \(\mathbb{F}_{q}\).  Since
\(P_{\alpha}(d)\) is independent of \(\ell\) by
Lemma~\ref{lemma:euler-characteristic-independent-of-l}, the
characteristic numbers are also independent of \(\ell\).
\end{proof}

\begin{theorem}\label{theorem:independent-of-ell-for-main}
Let \(\mathbb{P}_0\) be a smooth projective variety over
\(\mathbb{F}_{q}\).  Let \((\mathcal{P}_{\alpha})_{\alpha\in I}\) be a
weakly compatible \((E,I)\)-system of perverse sheaves on
\(\mathbb{P}_0\).  Set Then the constants \(C_{1}\), \(C_{2}\) and
\(C_{3}\) in Theorem~\ref{theorem:main} can be chosen to be
independent of \(\ell\).
\end{theorem}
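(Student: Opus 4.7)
The plan is to revisit the explicit formulas for $C_1, C_2, C_3$ spelled out in \ref{situation:explication-1}, \ref{situation:explication-2}, and \ref{situation:explication-3}, and to verify that every ingredient appearing there admits an $\ell$-independent bound. A careful inspection of those formulas shows that each $C_i$ is built from two kinds of quantities: (i) characteristic numbers of the form $\int_{\mathbb{P}} c_1(\mathscr{L})^{m+j} \capprod \mathrm{cc}_{m+j}(\mathcal{P}_{\alpha})$ (and their restrictions to linear subspaces $A_r$), and (ii) total Betti numbers $B(\mathbb{P}, \mathcal{P}_{\alpha})$, $B(A_r, \mathcal{P}_{\alpha}|_{A_r})$ of $\mathcal{P}_{\alpha}$ on the ambient projective variety and on sufficiently generic linear-sectional varieties. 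So it suffices to handle each family in turn.

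Family (i) is immediate from Lemma~\ref{lemma:characteristic-numbers-independent-ell}: the integrals $\int_{\mathbb{P}} c_1(\mathscr{L})^{j} \capprod \mathrm{cc}_j(\mathcal{P}_{\alpha})$ are independent of $\alpha$, and by taking the pullback system $(\mathcal{P}_{\alpha}|_{A_r})$ (which remains weakly $(E,I)$-compatible) the same applies to the analogous integrals on any $A_r$ defined over a finite extension of $\mathbb{F}_q$.

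For family (ii) I would prove, by induction on $\dim V$, the following auxiliary statement: for any projective variety $V_0/\mathbb{F}_q$ and any weakly $(E,I)$-compatible system $(\mathcal{G}_{\alpha})$ of perverse sheaves on $V_0$, the total Betti numbers $B(V, \mathcal{G}_{\alpha})$ are bounded by a constant independent of $\alpha$. The base case $\dim V = 0$ is trivial. For the inductive step, choose a hyperplane section $H_0 \subset V_0$ (defined over a finite extension of $\mathbb{F}_q$), so that $U_0 := V_0 \setminus H_0$ is affine. The excision long exact sequence gives
\[
B(V, \mathcal{G}_{\alpha}) \leq B_c(U, \mathcal{G}_{\alpha}|_U) + B(H, \mathcal{G}_{\alpha}|_H),
\]
the first term is bounded $\alpha$-independently by Lemma~\ref{lemma:katz} applied to $U_0$ with $e = 0$, and the second term is bounded by the induction hypothesis (the compatibility of the pullback system on $H_0$ is clear from the definition). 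To apply this to $B(A_r, \mathcal{P}_{\alpha}|_{A_r})$, I use Construction~\ref{situation:flag-construction}: the $A_r$ in \ref{situation:explication-2} and \ref{situation:explication-3} is the fiber of the universal codimension-$r$ subspace over a geometric generic point $\overline{\eta}$ of the flag moduli $\mathcal{M}$, a choice independent of $\alpha$; by smooth and proper base change together with constructibility, we may specialize to a fiber over a closed point of a common nonempty open in $\mathcal{M}$ defined over a suitable finite extension of $\mathbb{F}_q$, and Lemma~\ref{lemma:general-hyperplane-preserve-perv} guarantees the pullback system of perverse sheaves stays compatible and perverse on $A_r$.

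The main obstacle is that Lemma~\ref{lemma:katz} only yields $\ell$-independent bounds in the \emph{affine} setting, whereas the constants in question involve cohomology on projective targets; the excision/induction argument above is precisely the bridge. A subtler point is that the various ``generic choice'' open subsets in \(\mathcal{M}\), as well as the generic loci in the parameter spaces of auxiliary hypersurfaces used to define $C_4$ and hence $C_1$, depend a priori on $\alpha$. This is handled by working over the geometric generic point $\overline{\eta}$ (which is common to all $\alpha$) and invoking smooth/proper base change to replace generic-fiber Betti numbers by Betti numbers of a single specialization — once such a specialization is fixed (over a finite extension of $\mathbb{F}_q$), the inductive argument and Lemma~\ref{lemma:katz} deliver an $\ell$-independent bound simultaneously valid for every $\alpha \in I$.
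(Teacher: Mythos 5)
Your first reduction is exactly the paper's: the explicit formulas in \ref{situation:explication-1}--\ref{situation:explication-3} show that \(C_1, C_2, C_3\) are built from characteristic numbers (handled by Lemma~\ref{lemma:characteristic-numbers-independent-ell}) and from the Betti numbers \(B(A_r, \mathcal{P}_{\alpha}|_{A_r})\) of restrictions to generic linear sections, and both you and the paper funnel the latter into Lemma~\ref{lemma:katz}. Where you diverge is the bridge from the affine statement of Lemma~\ref{lemma:katz} to the projective targets: you induct on dimension using excision against a hyperplane section \(H\) with affine complement, whereas the paper covers \(X_0\) by finitely many affine opens \(U_{0,\nu}\) and runs a Mayer--Vietoris spectral sequence. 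The paper's choice is not cosmetic: restriction to an open affine is perverse \(t\)-exact, so every term \(\mathcal{P}^{(e)}_{\alpha}|_{U_J}\) in the spectral sequence is automatically a compatible system of perverse sheaves with no genericity hypothesis, and Lemma~\ref{lemma:katz} applies verbatim.

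Your route, by contrast, has a genuine gap at the step \(B(V,\mathcal{G}_{\alpha}) \leq B_c(U,\mathcal{G}_{\alpha}|_U) + B(H,\mathcal{G}_{\alpha}|_H)\): to feed \(B(H,\mathcal{G}_{\alpha}|_H)\) back into the induction hypothesis you need \((\mathcal{G}_{\alpha}|_H)\) to be (a shift of) a compatible system of \emph{perverse} sheaves, and for an arbitrary Cartier divisor \(H\) the restriction \(\mathcal{G}_{\alpha}|_H[-1]\) only has perverse amplitude \([0,1]\) (Lemma~\ref{lemma:estimate-perv-dergee}); taking \({}^{\mathrm{p}}\mathcal{H}^{\bullet}\) to fix this would destroy compatibility, which is precisely the open problem recorded at the end of the paper. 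Perversity holds only for \(H\) generic relative to \(\mathcal{G}_{\alpha}\), i.e.\ for \(H\) in a dense open \(U(\alpha)\) of the parameter space, and your proposed fix --- specializing to a closed point of ``a common nonempty open in \(\mathcal{M}\)'' defined over a finite extension of \(\mathbb{F}_q\) --- fails when \(I\) is infinite: over the countable field \(\overline{\mathbb{F}}_q\), the intersection \(\bigcap_{\alpha\in I} U(\alpha)\) need not contain any closed point at all. The paper never specializes simultaneously for all \(\alpha\); it works with the geometric generic fiber \(X^{(e)}_{\overline{\eta}}\) throughout (Construction~\ref{situation:flag-construction}) and only specializes pairwise, to a closed point of \(U(\alpha)\cap U(\beta)\), when comparing Euler characteristics inside Lemma~\ref{lemma:katz}. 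Your argument can be repaired by running the entire induction over geometric generic hyperplanes (so \(H\) lives over the function field of the dual projective space, not over a finite field), but at that point you have reconstructed Construction~\ref{situation:flag-construction}, and the Mayer--Vietoris cover remains the simpler way to avoid the issue altogether.
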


\begin{proof}
By the explicit bounds in \ref{situation:explication-1},
\ref{situation:explication-2}, and \ref{situation:explication-3},
together with the \(\ell\)-independence of characteristic numbers
(Lemma~\ref{lemma:characteristic-numbers-independent-ell}), it
suffices to bound the numbers \(B(A_{e}, \mathcal{P}_{e})\)
independently of \(\ell\), where \(A_{\bullet}\) is a general flag and
\(\mathcal{P}_{e} = \mathcal{P}[-e]\).  Applying
Construction~\ref{situation:flag-construction} and corresponding
notation, it is enough to show that
\(B(X^{(e)}_{\overline{\eta}}, \mathcal{P}^{(e)}_{\alpha})\) admits a
uniform bound independent of \(\ell\).

Cover \(X_0\) with finitely many affine open subsets
\(X_0 = \bigcup_{\nu=1}^r U_{0\nu}\).  For any nonempty subset
\(J \subset \{1,\ldots,r\}\), let
\(U_{0,J} = \bigcap_{\nu\in J} U_{0,\nu}\).  The Mayer--Vietoris
spectral sequence reads:
\[
E_2^{-a,b} = \bigoplus_{\#J = a+1} \mathrm{H}_c^b(U^{(e)}_{J, \overline{\eta}}; \mathcal{P}^{(e)}_\alpha)
\Longrightarrow \mathrm{H}_c^{b-a}(X^{(e)}_{\overline{\eta}}; \mathcal{P}^{(e)}_\alpha).
\]
where \(a,b\geq 0\).

Each \(U_J\) is affine, so by Lemma~\ref{lemma:katz}, there is an
upper bound
\[
B_c(U^{(e)}_{J, \overline{\eta}}; \mathcal{P}^{(e)}_\alpha) \leq M_J
\]
with \(M_J\) independent of \(\alpha\in I\).  Thus,
\[
B_c(X^{(e)}_{\overline{\eta}}; \mathcal{P}_\alpha) \leq \sum_{\substack{J \subset \{1,\ldots,r\} \\ J \neq \emptyset}} M_J,
\]
which gives an \(\ell\)-independent upper bound as required.
\end{proof}

The same argument yields the following result,
which is of independent interest.
Katz proved a similar result for lisse sheaves using above mentioned method
\cite{katz_betti}, so we make no claim to originality here.

\begin{theorem}\label{theorem:katz}
Let \(X_0\) be a separated scheme of finite type over
\(\mathbb{F}_{q}\).  Let \((\mathcal{P}_{\alpha})_{\alpha\in I}\) be a
weakly \((E,I)\)-compatible system of perverse sheaves on \(V\).
\begin{enumerate}
\item There exists a constant
\(M = M(X,(\mathcal{P}_{\alpha})_{\alpha\in I}) > 0\) independent of
\(\ell\), such that
\[
B_{c}(X;\mathcal{P}_{\alpha}) = \sum_{i} \dim
\mathrm{H}_{c}^{i}(X; \mathcal{P}_{\alpha}) \leq M
\]
for all \(\alpha \in I\).
\item There exists a constant
\(N = N(X,(\mathcal{P}_{\alpha})_{\alpha\in I})\), such that
\[
B(X,\mathcal{P}_{\alpha}) \leq N.
\]
for any \(\alpha\in I\).
\end{enumerate}
\end{theorem}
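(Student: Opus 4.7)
The plan is to treat the two parts separately. Part~(1) is a direct repetition of the Mayer--Vietoris bootstrap already used in the proof of Theorem~\ref{theorem:independent-of-ell-for-main}, while part~(2) will be reduced to part~(1) via Poincaré--Verdier duality.

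For part~(1), I would cover \(X_0\) by finitely many affine open subsets \(X_0 = \bigcup_{\nu=1}^r U_{0,\nu}\). Since \(X_0\) is separated, every intersection \(U_{0,J} = \bigcap_{\nu \in J} U_{0,\nu}\) is again affine, so Lemma~\ref{lemma:katz} applied at \(e = 0\) (after choosing a projective embedding) produces constants \(P_J\), independent of \(\alpha \in I\), with \(B_c(U_J, \mathcal{P}_\alpha|_{U_J}) \leq P_J\). The Mayer--Vietoris spectral sequence
\[
E_1^{p,q} = \bigoplus_{\#J = p+1} \mathrm{H}^q_c(U_J; \mathcal{P}_\alpha|_{U_J}) \Longrightarrow \mathrm{H}^{p+q}_c(X; \mathcal{P}_\alpha)
\]
then furnishes the desired \(\ell\)-independent bound \(B_c(X, \mathcal{P}_\alpha) \leq \sum_{\emptyset \neq J} P_J\).

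For part~(2), I would use Poincaré--Verdier duality to write \(B(X, \mathcal{P}_\alpha) = B_c(X, \mathbb{D}_X \mathcal{P}_\alpha)\); since \(\mathbb{D}_X\) interchanges the perverse connective and coconnective parts of the t-structure, each \(\mathbb{D}_X \mathcal{P}_\alpha\) is again perverse. It would then suffice to show that \((\mathbb{D}_X \mathcal{P}_\alpha)_{\alpha \in I}\) is itself weakly compatible with respect to some pair \((E', I')\), after which part~(1) applies to the dual system. The main obstacle lies precisely in this compatibility check: when \(X_0\) is not smooth, the stalk of \(\mathbb{D}_X \mathcal{F}\) at a closed point \(x\) involves \(i_x^!\) rather than \(i_x^*\), so the transformation of local \(L\)-factors under duality is not a naive pointwise manipulation. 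I would handle this by dévissage along a stratification of \(X_0\) on which each \(\mathcal{P}_\alpha\) is lisse---such a stratification exists independently of \(\alpha\) because characteristic cycles are \(\ell\)-independent by Lemma~\ref{lemma:cc-independent-of-ell}---so that on each smooth lisse stratum Verdier duality reduces to Poincaré duality and the transformation of Frobenius eigenvalues becomes explicit, with the general case following by induction on dimension from the excision triangles.

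A cleaner alternative that sidesteps the Verdier-dual compatibility check is to re-run the proof of Lemma~\ref{lemma:katz} directly with ordinary cohomology. The needed substitutes are the dual form of Artin vanishing (\(\mathrm{H}^i(U,\mathcal{Q}) = 0\) for \(i > 0\) when \(U\) is affine and \(\mathcal{Q}\) is perverse connective), the identity \(\chi(U,\mathcal{F}) = \chi_c(U,\mathcal{F})\) on algebraic varieties (which, combined with Lemma~\ref{lemma:euler-characteristic-independent-of-l}, yields \(\ell\)-independence of ordinary Euler characteristics), and the dual Gysin surjections obtained by applying Verdier duality to Lemma~\ref{lemma:gysin}. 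With these substitutions the same inductive bookkeeping as in Lemma~\ref{lemma:katz} proceeds with ordinary cohomology, directly bounding \(B(U_J, \mathcal{P}_\alpha|_{U_J})\) on each affine chart before being assembled by the Mayer--Vietoris spectral sequence for ordinary cohomology, exactly as in part~(1).
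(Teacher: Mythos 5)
Your part~(1) is exactly the paper's argument: Mayer--Vietoris over an affine cover (using separatedness so that the \(U_J\) are affine) plus Lemma~\ref{lemma:katz} on each piece. For part~(2) the paper simply reduces to part~(1) by Verdier duality, invoking a theorem of Gabber (cited from Fujiwara's article on independence of \(\ell\) for intersection cohomology) which says that \((\mathcal{F}_{\alpha})_{\alpha}\) is weakly \((E,I)\)-compatible if and only if \((\mathbb{D}_{X}\mathcal{F}_{\alpha})_{\alpha}\) is. Your primary route for~(2) attempts to establish this duality-compatibility by hand, and here there is a genuine gap: weak compatibility is a condition on the polynomials \(\det(1-t\,\mathrm{Frob}_{x},-)\) at closed points, and while it is multiplicative in excision triangles, your induction would need to know that \(Rj_{*}\) of a compatible system on a lisse stratum (equivalently, \(i^{!}\) at boundary points) is again compatible. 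The Frobenius eigenvalues of \((Rj_{*}\mathcal{G})_{x}\) at a boundary point depend on the local monodromy of \(\mathcal{G}\) near \(x\), not merely on its characteristic polynomials at the closed points of the stratum, so the pointwise compatibility on the open part does not formally propagate. This is precisely the content of Gabber's theorem (a descendant of Deligne's Weil~II and a \v{C}ebotarev argument), and it cannot be dispatched by the sketched d\'evissage; also, Lemma~\ref{lemma:cc-independent-of-ell} as stated only applies to smooth ambient varieties, so it does not by itself furnish an \(\alpha\)-independent lisse stratification of a general \(X_{0}\).

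Your alternative route for~(2), however, is sound and is genuinely different from the paper's: instead of dualizing the system, you dualize the whole argument of Lemma~\ref{lemma:katz}, replacing Artin vanishing by its connective form (\(\mathrm{H}^{i}(U;\mathcal{Q})=0\) for \(i>0\), \(U\) affine, \(\mathcal{Q}\) perverse connective), the Gysin surjections by Deligne's restriction statement \eqref{eq:deligne-lef}, and the \(\ell\)-independence of \(\chi_{c}\) by that of \(\chi\) via Laumon's theorem \(\chi=\chi_{c}\). The index bookkeeping goes through symmetrically (one gets \(h^{-e}(X;\mathcal{P}_{\alpha})\leq h^{0}(X\cap A_{e};\mathcal{P}_{\alpha}|_{A_{e}}[-e])\leq \chi^{(e)}+\chi^{(e+1)}\)), and the Čech spectral sequence for ordinary cohomology of the open cover assembles the affine pieces. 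What this buys is self-containedness: you avoid Gabber's deep compatibility theorem at the cost of one extra standard input (Laumon). I would recommend dropping the first route for~(2), or replacing it by the citation to Gabber, and keeping the second.
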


\begin{proof}
It suffices to prove (1), because a theorem of
Gabber~\cite[Theorem~2]{fujiwara_gabber-independence-of-l-for-intersection-cohomology}
states that if \((\mathcal{F}_{\alpha})_{\alpha\in I}\) is weakly
\((E,I)\) compatible if and only if
\((\mathbb{D}_{X}\mathcal{F}_{\alpha})_{\alpha\in I}\) is.
To prove (1), a Mayer--Vietoris argument as above reduces to the case where \(X\) is affine.  For the affine case one appleis Lemma~\ref{lemma:katz}
\end{proof}

\begin{corollary}\label{corollary:independent-of-ell-for-arbitrary}
Suppose \(X_0\) is a projective variety over a finite field
\(\mathbb{F}_q\), and suppose \(\mathcal{F}_{\alpha}\) is a weakly
\((E,I)\)-compatible system of constructible sheaves on \(X_0\).  Then
the constant \(C\) in Theorem~\ref{theorem:arbitrary} can be made
independent of \(\ell\).
\end{corollary}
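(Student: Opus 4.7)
The plan is to retrace the induction on \(\dim X\) in the proof of Theorem~\ref{theorem:arbitrary} and check that at every stage the constants can be chosen independently of \(\alpha \in I\). The first step is to verify that the discrete invariants appearing in that proof do not depend on \(\alpha\). By Lemma~\ref{lemma:cc-independent-of-ell}, after restricting to the smooth locus of \(X_{0}\), the characteristic cycle \(\mathrm{CC}(\mathcal{F}_{\alpha})\) is independent of \(\alpha\); its base defines a common closed set outside of which every \(\mathcal{F}_{\alpha}\) is lisse, and the multiplicity of the zero section over the \(n\)-dimensional strata equals the generic rank, so \(\operatorname{rank}^{\circ}(\mathcal{F}_{\alpha})\) is \(\alpha\)-independent. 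This allows us to choose a single open dense smooth subscheme \(X_{0}^{\circ} \subset X_{0}\), defined over \(\mathbb{F}_{q}\) with \(D_{0} = X_{0} \setminus X_{0}^{\circ}\) a Cartier divisor, that plays the role of \(X^{\circ}\) in the proof of Theorem~\ref{theorem:arbitrary} simultaneously for every \(\mathcal{F}_{\alpha}\).

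Next I would check that the construction \(\iota_{\ast}(j_{!}j^{\ast}\mathcal{F}_{\alpha}[n])\) on \(\mathbb{P}\) assembles into a weakly \((E,I)\)-compatible system of perverse sheaves. The local Frobenius characteristic polynomials behave multiplicatively in the excision triangle \(j_{!}j^{\ast} \to \mathrm{id} \to i_{\ast}i^{\ast}\) and are preserved by pushforward along a closed immersion, so the compatibility of \((\mathcal{F}_{\alpha})\) implies the compatibility of both \((j_{!}j^{\ast}\mathcal{F}_{\alpha}[n])\) and \((i^{\ast}\mathcal{F}_{\alpha})\); the former is perverse because \(j\) is affine and quasi-finite, as in the original proof. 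Then Theorem~\ref{theorem:independent-of-ell-for-main}, applied to the compatible system \((\iota_{\ast}(j_{!}j^{\ast}\mathcal{F}_{\alpha}[n]))_{\alpha}\) on the smooth projective variety \(\mathbb{P}\), produces an \(\alpha\)-uniform constant \(C_{3}\) such that
\[
B\bigl(Y,\ j_{!}j^{\ast}\mathcal{F}_{\alpha}[n]|_{Y}\bigr) \leq 3\operatorname{rank}^{\circ}(\mathcal{F}_{\alpha}) \cdot \deg(\overline{X^{\circ}}) \cdot d^{n} + C_{3} \cdot d^{n-1}
\]
for every degree \(d\) hypersurface \(Y \subset X\).

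Finally, the induction on \(\dim X\) closes. The boundary contribution lives on the compatible system \((i^{\ast}\mathcal{F}_{\alpha})\) on \(D_{0}\), and \(\dim D_{0} < \dim X\), so by the inductive hypothesis the constant \(C_{i^{\ast}\mathscr{L}}(D,i^{\ast}\mathcal{F}_{\alpha})\) is \(\ell\)-independent. Feeding this into the additive recipe
\[
C_{\mathscr{L}}(X,\mathcal{F}_{\alpha}) = C_{3} + \max\bigl\{3\operatorname{rank}^{\circ}(i^{\ast}\mathcal{F}_{\alpha}) \cdot \deg D,\ C_{i^{\ast}\mathscr{L}}(D,i^{\ast}\mathcal{F}_{\alpha})\bigr\}
\]
from the proof of Theorem~\ref{theorem:arbitrary} yields the desired \(\alpha\)-independent constant \(C\). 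The main obstacle is the first step: guaranteeing a common locus of lisseness valid for every \(\alpha \in I\); this is exactly what Lemma~\ref{lemma:cc-independent-of-ell} provides, combined with the standard fact that the base of the characteristic cycle is the singular support and hence governs the lisseness locus. Once this is in hand, the remainder is routine verification that each step in the proof of Theorem~\ref{theorem:arbitrary} preserves weak compatibility.
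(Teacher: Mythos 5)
Your proposal is, in structure, exactly the paper's proof: rerun the induction on \(\dim X\) from Theorem~\ref{theorem:arbitrary}, observe that \((j_!j^{\ast}\mathcal{F}_{\alpha}[n])_{\alpha}\) and \((i^{\ast}\mathcal{F}_{\alpha})_{\alpha}\) are again weakly \((E,I)\)-compatible (compatibility is a stalkwise condition, so it passes through \(j_!j^{\ast}\), \(i^{\ast}\), \(\iota_{\ast}\) and shifts), feed the perverse open part into Theorem~\ref{theorem:independent-of-ell-for-main}, and absorb the boundary \(D\) by the inductive hypothesis. The paper disposes of the point you rightly identify as the crux --- that a \emph{single} \(X^{\circ}\) works for every \(\alpha\) --- by simply asserting that the auxiliary loci are ``intrinsically defined in terms of \(X_0\)''; you instead try to derive it from Lemma~\ref{lemma:cc-independent-of-ell}.

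That derivation is the one step that does not hold as stated. You invoke ``the standard fact that the base of the characteristic cycle is the singular support and hence governs the lisseness locus.'' This is true for perverse sheaves, for which Saito shows \(\mathrm{CC}\) is effective with support equal to \(\mathrm{SS}\), but it fails for plain constructible sheaves: \(\mathrm{cc}\) and \(\mathrm{CC}\) factor through the Grothendieck group, so components of the singular support can cancel. For instance, on \(\mathbb{A}^1\) the sheaf \(j_!\Lambda_{\mathbb{A}^1\setminus\{0\}}\oplus i_{\ast}\Lambda_{\{0\}}\) has the same characteristic cycle \(-[T^{\ast}_{\mathbb{A}^1}\mathbb{A}^1]\) as \(\Lambda_{\mathbb{A}^1}\) --- and even the same local Frobenius factors at every closed point --- yet is not lisse at the origin. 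So neither the \(\alpha\)-independence of \(\mathrm{CC}(\mathcal{F}_{\alpha})\) nor weak compatibility itself forces the lisse loci to coincide, and your first step does not actually produce the common \(X^{\circ}_0\) you need (which is also what makes the derived systems \(j_!j^{\ast}\mathcal{F}_{\alpha}[n]\) compatible with one another). What your argument does correctly extract from Lemma~\ref{lemma:cc-independent-of-ell} (or more simply from the degrees of the local factors) is that \(\operatorname{rank}^{\circ}(\mathcal{F}_{\alpha})\) is \(\alpha\)-independent. To close the gap one needs an additional input guaranteeing a common open of lisseness --- e.g.\ that the \(\mathcal{F}_{\alpha}\) arise by spreading out a single geometric datum, or a finiteness/uniformity hypothesis on the stratifications --- and you should be aware that the paper's own proof elides exactly the same point. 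Apart from this, the remainder of your verification (multiplicativity of local factors in the excision triangle, perversity of \(j_!j^{\ast}\mathcal{F}_{\alpha}[n]\), the additive recursion for the constant) matches the paper and is fine.
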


\begin{proof}
Without loss of generality, we may assume that all irreducible
components of \(X_0\) are defined over \(\mathbb{F}_q\).  Thus, all
auxiliary subvarieties and open subsets constructed in the proof of
Theorem~\ref{theorem:arbitrary} (namely \(X^\circ\), \(D^\circ\),
etc.) are intrinsically defined in terms of \(X_0\), and are
independent of the choice of~\(\ell\).

By construction, the complexes \(j_! j^* \mathcal{F}_\alpha[n]\)
(where \(j\colon X^\circ \hookrightarrow X_0\) and \(n = \dim X_0\))
and \(\mathcal{F}_\alpha|_D\) form weakly \((E,I)\)-compatible systems
(see the notation in the proof of Theorem~\ref{theorem:arbitrary}).
Applying Theorem~\ref{theorem:independent-of-ell-for-main} in this
context shows that the bounds for the Betti numbers of
\(j_! j^* \mathcal{F}_\alpha[n]\) are independent of~\(\ell\).

Once the Betti numbers for the open part have been bounded, we pass to
the complement \(D\), which has smaller dimension and again satisfies
the same hypotheses.  Applying the same argument recursively to each
such \(D\) and its analogues at each step, we find at every stage
compatible systems of sheaves on varieties defined over
\(\mathbb{F}_q\), to which
Theorem~\ref{theorem:independent-of-ell-for-main} applies.  At each
step, the resulting constants are independent of~\(\ell\), since all
relevant data and decompositions are defined over \(\mathbb{F}_q\) and
the arguments depend only on the compatible systems.

After finitely many steps, this process yields the result for all of
\(X_0\), and the uniform bound follows.  Thus, the constant \(C\) in
Theorem~\ref{theorem:arbitrary} may be chosen independently
of~\(\ell\).
\end{proof}

By a standard argument (extracting coefficients, and spread out),
using that the constant sheaf is always in a compatible system,
we deduce the following.

\begin{corollary}
Let \(X\) be any variety over an algebraically closed field \(k\)
and \(\mathcal{E} = \Lambda_X\).
Then the constants in Theorem~\ref{theorem:arbitrary} and Theorem~\ref{theorem:lci} can be chosen to be independent of \(\ell\).
\end{corollary}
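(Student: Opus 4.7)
The plan is to reduce the statement to the finite-field setting: spread \(X\) out to a flat projective family over a finitely generated \(\mathbb{Z}\)-subalgebra \(R \subset k\), specialize to a closed point \(s_0 \in \operatorname{Spec} R\) with finite residue field, and then invoke Corollary~\ref{corollary:independent-of-ell-for-arbitrary} (for Theorem~\ref{theorem:arbitrary}) and Theorem~\ref{theorem:independent-of-ell-for-main} (for Theorem~\ref{theorem:lci}, via the proof of the latter through Theorem~\ref{theorem:main}). The conceptual point justifying this reduction is that the constant sheaf \(\Lambda_X\) automatically sits inside a weakly \((\mathbb{Q}, I)\)-compatible system across all \(\ell\): Frobenius acts trivially on every stalk, so each local \(L\)-factor equals \(1-t\), independently of \(\ell\).

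Concretely, I would first extract coefficients from the finitely many equations cutting out \(X \hookrightarrow \mathbb{P}^N_k\) (and from a defining section of \(\mathscr{L}\)) into a finitely generated \(\mathbb{Z}\)-subalgebra \(R \subset k\), obtaining a flat projective model \(X_R \hookrightarrow \mathbb{P}^N_R\) with \(X_R \otimes_R k \simeq X\) and a relatively very ample \(\mathscr{L}_R\). After shrinking \(\operatorname{Spec} R\), I may arrange that every geometric fiber has the same dimension and \(\mathscr{L}_R\)-degree as \(X\); that the Cartier divisor \(D = X \setminus X^\circ\) used in the proof of Theorem~\ref{theorem:arbitrary} descends to the entire family; and, in the LCI case, that the set-theoretic local complete intersection condition persists fiberwise (all of these are constructible conditions on the base). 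Picking a closed point \(s_0 \in \operatorname{Spec} R\) with finite residue field \(\mathbb{F}_{q_0}\) and applying the finite-field result to \(X_{s_0}/\mathbb{F}_{q_0}\) together with the compatible system of constant sheaves yields a constant \(C_0\), independent of \(\ell \neq \operatorname{char}\mathbb{F}_{q_0}\), valid for every degree-\(d\) hypersurface in \(X_{s_0}\).

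To transfer this bound to an arbitrary degree-\(d\) hypersurface \(Y \subset X\), I would enlarge \(R\) to a finitely generated subalgebra \(R' \subset k\) accommodating a defining equation of \(Y\), so that \(Y\) descends to \(Y_{R'} \subset X_{R'}\). Constructibility of \(R^i f_\ast \mathbb{F}_\ell\) combined with proper base change lets me shrink \(R'\) so that \(B(Y_{s'}, \mathbb{F}_\ell) = B(Y, \mathbb{F}_\ell)\) for every closed point \(s' \in \operatorname{Spec} R'\) in a dense open. Choosing such an \(s'\) lying over \(s_0\), the fiber \(X_{s'} = X_{s_0} \otimes_{\mathbb{F}_{q_0}} \kappa(s')\); the finite-field bound for \(X_{s'}\) may be taken with the same constant \(C_0\), because the explicit expressions in \ref{situation:explication-1}--\ref{situation:explication-3} depend only on characteristic numbers and flag-restriction Betti numbers, all of which are invariant under base change between algebraically closed fields. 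Hence
\[
B(Y, \mathbb{F}_\ell) = B(Y_{s'}, \mathbb{F}_\ell) \leq 3 \deg(X)\, d^n + C_0\, d^{n-1},
\]
with \(C_0\) independent of both \(Y\) and \(\ell\).

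The main obstacle is the stability of \(C_0\) under base change from \(\mathbb{F}_{q_0}\) to \(\kappa(s')\): the set of degree-\(d\) hypersurfaces in \(X_{s'}\) is strictly larger than the base change of those in \(X_{s_0}\), so the bound cannot be reused tautologically. This stability reduces, via Lemma~\ref{lemma:cc-independent-of-ell} and proper base change, to the standard fact that the characteristic classes and generic Betti numbers entering the explicit forms of the bound are geometric invariants insensitive to finite extensions of the base field. The argument for Theorem~\ref{theorem:lci} runs identically, with the set-theoretic LCI condition on \(X_{s_0}\) ensuring (via Lemma~\ref{lemma:lci-perv}) that the shifted constant sheaf is perverse and Theorem~\ref{theorem:independent-of-ell-for-main} applies.
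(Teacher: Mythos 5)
Your proposal is correct and is precisely the argument the paper intends: the paper's proof consists of the single remark that one extracts coefficients, spreads out over a finitely generated \(\mathbb{Z}\)-algebra, specializes to a closed point with finite residue field, and uses that the constant sheaf (trivial Frobenius on stalks, so every local factor is \(1-t\)) lies in a weakly compatible system for all \(\ell\), after which Corollary~\ref{corollary:independent-of-ell-for-arbitrary} and Theorem~\ref{theorem:independent-of-ell-for-main} apply. Your elaboration of the specialization step via constructibility of higher direct images, proper base change, and the invariance of the characteristic numbers under change of algebraically closed base field is a faithful filling-in of that one-line sketch.
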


We conclude this paper with a open problem.

\begin{situation}
[Problem]
\textit{Suppose \((\mathcal{F}_{\alpha})_{\alpha\in I}\) is a weakly
  \((E,I)\)-compatible system of constructible complexes on a smooth
  variety \(\mathbb{P}_{0}\) supported on the same \(n\)-dimensional
  subvariety \(S_{0} \subset \mathbb{P}_{0}\).  Find an upper bound of the
  form
\[
B(Y, \mathcal{F}_{\alpha}) \leq A \cdot d^n + B \cdot d^{n-1}
\quad (Y \text{ an arbitrary degree } d \text{ hypersurface in } \mathbb{P})
\]
where \(A\) and \(B\) are constants independent of \(\alpha\), \(Y\), and \(d\).}
\end{situation}

Although we have obtained an upper bound in Corollary~\ref{corollary:bound-any},
it is not clear whether the constant \(C\) appearing in this bound is independent
of~\(\ell\).  This issue arises because it is not known whether applying
\({}^{\mathrm{p}}\mathcal{H}^{e}\) to a weakly \((E, I)\)-compatible system
of constructible complexes on a smooth variety \(\mathbb{P}\) produces a
weakly \((E, I)\)-compatible system of perverse sheaves on \(\mathbb{P}\).

On the positive side, when the \(\mathcal{F}_{\alpha}\)
are honest constructible sheaves
(rather than complexes),
Theorem~\ref{theorem:arbitrary} offers an alternative approach that preserves
compatibility and avoids the need to take perverse cohomology.
Therefore, Corollary~\ref{corollary:independent-of-ell-for-arbitrary} gives
a positive answer to the question in the case where the
\(\mathcal{F}_{\alpha}\) are
constructible sheaves.

\printbibliography
\end{document}